\DeclareMathOperator{\sgn}{sgn}
\DeclareMathOperator{\wt}{wt}
\theoremstyle{plain}
\newtheorem{theorem}{\scshape Theorem}[section]
\newtheorem{proposition}[theorem]{\scshape Proposition}
\newtheorem{lemma}[theorem]{\scshape Lemma}
\newtheorem{corollary}[theorem]{\scshape Corollary}
\newtheorem{conjecture}[theorem]{\scshape Conjecture}
\newtheorem*{assumption*}{\scshape Assumption}
\newtheorem*{claim*}{Claim}
\theoremstyle{definition}
\newtheorem{definition}[theorem]{\scshape Definition}
\newtheorem{remark}[theorem]{\scshape Remark}
\newcommand{\M}{\operatorname{M}}
\newcommand{\pf}{\mathsf{pf}}
\renewcommand{\det}{\mathsf{det}}
\renewcommand{\wt}{\mathsf{wt}}
\numberwithin{equation}{section}
\titleformat{\section}{\centering\bfseries}{\thesection}{1em}{\MakeUppercase}
\titleformat{\subsection}{\bfseries}{\thesubsection}{1em}{}
\begin{document}
\title{Off-diagonally symmetric domino tilings of the Aztec diamond of odd order}
\author{Yi-Lin Lee}
%\date{}
\address{Department of Mathematics, Indiana University, Bloomington, Indiana 47405}
\email{yillee@iu.edu}
\subjclass{05A15, 05B20, 05B45}
\keywords{Aztec diamond, domino tilings, method of non-intersecting lattice paths, Pfaffians, symmetry classes.}
%\thanks{}

\begin{abstract}
We study the enumeration of off-diagonally symmetric domino tilings of odd-order Aztec diamonds in two directions: (1) with one boundary defect, and (2) with maximally-many zeroes on the diagonal. In the first direction, we prove a symmetry property which states that the numbers of off-diagonally symmetric domino tilings of the Aztec diamond of order $2n-1$ are equal when the boundary defect is at the $k$th position and the $(2n-k)$th position on the boundary, respectively. This symmetry property proves a special case of a recent conjecture by Behrend, Fischer, and Koutschan.

In the second direction, a Pfaffian formula is obtained for the number of ``nearly'' off-diagonally symmetric domino tilings of odd-order Aztec diamonds, where the entries of the Pfaffian satisfy a simple recurrence relation. The numbers of domino tilings mentioned in the above two directions do not seem to have a simple product formula, but we show that these numbers satisfy simple matrix equations in which the entries of the matrix are given by Delannoy numbers. The proof of these results involves the method of non-intersecting lattice paths and a modification of Stembridge's Pfaffian formula for families of non-intersecting lattice paths. Finally, we propose conjectures concerning the log-concavity and asymptotic behavior of the number of off-diagonally symmetric domino tilings of odd-order Aztec diamonds.
\end{abstract}

\maketitle

\section{Introduction and statement of results}

In 1992, Elkies, Kuperberg, Larsen and Propp~\cite{ELKP1, ELKP2} introduced domino tilings of the Aztec diamond. The \textit{Aztec diamond} of order $n$, denoted by $AD(n)$, is the union of all unit squares inside the diamond-shaped region $\{(x,y) \in \mathbb{R}^2 : |x|+|y| \leq n+1 \}$. A \textit{domino tiling} of the Aztec diamond is a covering of $AD(n)$ using dominoes (union of two unit squares sharing an edge) without gaps or overlaps. They proved that the number of domino tilings of $AD(n)$, denoted by $\M(AD(n))$, is given by the following elegant formula.
\begin{equation}\label{eq.adn}
  \M(AD(n)) = 2^{\frac{n(n+1)}{2}}.
\end{equation}

Given a set of combinatorial objects $X$ and a finite group $G$ acting on it, a \textit{symmetry class} is a collection of $H$-invariant objects of $X$, where $H$ is a subgroup of $G$. The symmetry classes of plane partitions and alternating sign matrices have been studied extensively for the past forty years (see for instance \cite[Section 6]{Kra15S} and \cite[Section 2]{BFK23}). Two of the symmetry classes of domino tilings of $AD(n)$ (quarter-turn invariant and half-turn invariant) have been enumerated by Yang \cite{Yang91} and Ciucu \cite[Section 7]{Ciucu97} in the 1990s. In the previous paper \cite{Lee23}, the author introduced and enumerated a new symmetry class of domino tilings of $AD(n)$, which is called the \textit{off-diagonal} symmetry class.

In \cite{Lee23}, the author proved that (as shown in Theorem~\ref{thm.OffPfaffian}) the number of off-diagonally symmetric domino tilings of $AD(n)$ with some boundary defects (some unit squares are removed from the boundary of the Aztec diamond) can be expressed by the Pfaffian of a matrix $A_I$, where $A_I$ is the matrix obtained from $A$ by selecting rows and columns indexed by $I$ (depending on these boundary defects). In particular, if there are no boundary defects, then off-diagonally symmetric domino tilings only exist for Aztec diamonds of even order. This can be explained in two ways: (1) the number of off-diagonally symmetric domino tilings of an odd-order Aztec diamond is given by the Pfaffian of an odd-order matrix, which is zero; (2) using a well-known bijection (see Section~\ref{sec.methodpath}) between domino tilings of $AD(n)$ and $n$-tuples of non-intersecting Delannoy paths, off-diagonally symmetric domino tilings of $AD(n)$ are in bijection with $n$-tuples of non-intersecting paths where the endpoints of these paths are in pairs (this is due to an ``off-diagonal'' condition \cite[Lemma 7]{Lee23}), which can only occur when $n$ is even.

In this paper, we consider off-diagonally symmetric domino tilings of Aztec diamonds of odd order in the following two directions.
\begin{enumerate}
  \item Off-diagonally symmetric domino tilings of $AD(2n-1)$ with one boundary defect (Section~\ref{sec.off}). This implies that the corresponding Pfaffian is of even order, so such domino tilings exist. Our first main result (Theorem~\ref{thm.main1}) gives a symmetry property which states that the numbers of off-diagonally symmetric domino tilings of $AD(2n-1)$ are equal when the unit square at the $k$th position and the $(2n-k)$th position on the boundary are removed, respectively.
  \item Nearly off-diagonally symmetric domino tilings of $AD(2n-1)$ (Section~\ref{sec.nearoff}). From the bijection mentioned in the previous paragraph, domino tilings are in bijection with $(2n-1)$-tuples of non-intersecting paths. Among these paths, the ending points of $2n-2$ paths are paired, while the remaining one path has no such restriction. This corresponds to the condition that there are maximally-many ``zeroes'' on the diagonal (as defined in Section~\ref{sec.nearoff}). The second main result (Theorem~\ref{thm.main2}) provides a Pfaffian expression of nearly off-diagonally symmetric domino tilings of $AD(2n-1)$.
\end{enumerate}
Our data indicates that the numbers of domino tilings considered in the above two directions do not seem to have a simple product formula. Nevertheless, these numbers satisfy simple matrix equations which are presented in our third main result (Theorem \ref{thm.main3}).

\subsection{Off-diagonally symmetric domino tilings of the Aztec diamond with one boundary defect}\label{sec.off}

Given $AD(n)$, we consider the checkerboard coloring of the square lattice with the unit squares along its top right boundary colored black. A \textit{cell} is a $2 \times 2$ square with bottom left and top right unit squares colored black while top left and bottom right colored white. Given a domino tiling of $AD(n)$, we assign a number $-1, 0$ or $1$ to each cell if it contains $0, 1$ or $2$ complete domino(es), respectively. This $(-1,0,1)$-assignment of cells from a given domino tiling of $AD(n)$ corresponds to an \textit{$n \times n$ alternating sign matrix}\footnote{An \emph{alternating sign matrix of order $n$} is an $n \times n$ matrix with entries $0, 1$ or $-1$ such that all row and column sums are equal to $1$ and the non-zero entries alternate in sign in each row and column.}, which was first provided by Ciucu \cite[Section 2]{Ciucu96}. We refer the interested reader to \cite[Section 1.2]{Lee23} for more details.

The concept of off-diagonally symmetric domino tilings of the Aztec diamond was motivated by \textit{off-diagonally symmetric alternating sign matrices} introduced by Kuperberg \cite{Kup02}, where these matrices are symmetric with respect to the diagonal and the diagonal entries are all zero. A domino tiling of $AD(n)$ is said to be \textit{off-diagonally symmetric} if
  \begin{itemize}
    \item the tiling is symmetric about the vertical diagonal of $AD(n)$, and
    \item the $n$ cells along the vertical diagonal are assigned $0$. In other words, each such cell contains exactly one complete domino.
  \end{itemize}

In addition, we consider some boundary defects of the Aztec diamond, that is, some unit squares are removed from the boundary. By symmetry, if we remove one unit square from the southwestern boundary, then the corresponding unit square on the southeastern boundary also needs to be removed. We label the unit squares on the southwestern boundary of $AD(n)$ by $1,2,\dotsc,n$ from bottom to top. Let $O(n;I)$ be the set of off-diagonally symmetric domino tilings of $AD(n)$ with all unit squares removed from the southwestern boundary \textbf{except} for those labeled $I = \{i_1,\dotsc,i_r\}$, where $1 \leq i_1 < \cdots < i_r \leq n$. Although there is no known or conjectured closed-form formula for $|O(n;I)|$, we summarize below a Pfaffian expression of $|O(n;I)|$ in our earlier work (\cite[Theorems~2 and 4]{Lee23}).
\begin{theorem}[\cite{Lee23}]\label{thm.OffPfaffian}
  Let $A = [a_{i,j}]_{i,j \geq 1}$ be the infinite skew-symmetric matrix whose entries satisfy the following recurrence relation:
  \begin{equation}\label{eq.rec}
  a_{i,j}=
    \begin{cases}
    2, & \text{ if $i=1,j>1$,} \\
    a_{i-1,j} + a_{i,j-1} + a_{i-1,j-1}, & \text{ if $i>1,j>i+1$,} \\
    a_{i-1,j} + a_{i-1,j-1} + 2(-1)^{i-1}, & \text{ if $i>1,j=i+1$,} \\
    -a_{j,i}, & \text{ if $j \leq i$.}
    \end{cases}
  \end{equation}
  Then the number of off-diagonally symmetric domino tilings of $AD(n)$ with some boundary defects (depending on $I$) is given by\footnote{Let $M=[m_{i,j}]$ be a $2n \times 2n$ skew-symmetric matrix. The \emph{Pfaffian of $M$} is defined to be
\begin{equation*}
  \pf(M)= \frac{1}{2^n n!}\sum_{\sigma \in \mathfrak{S}_{2n}} \sgn(\sigma) \prod_{i=1}^{n}m_{\sigma(2i-1), \sigma(2i)},
\end{equation*}
where $\mathfrak{S}_{2n}$ is the symmetric group of $\{1,2,\dotsc,2n\}$ and $\sgn(\sigma)$ is the sign of the permutation $\sigma$.}
  \begin{equation}\label{eq.main1}
    |O(n;I)| = \pf(A_I),
  \end{equation}
  where $A_I$ is the matrix obtained from $A$ by selecting rows and columns indexed by $I$.
\end{theorem}

In the first direction, we consider off-diagonally symmetric domino tilings of $AD(2n-1)$ with one boundary defect, in other words, the index set is $I = [2n-1] \setminus \{k\}$, for some $k=1,2,\dotsc,2n-1$. For convenience, we write $O(2n-1;k)$ for the set $O(2n-1 ; [2n-1] \setminus \{k\})$, for some $k=1,2,\dotsc,2n-1$. Figure \ref{fig.O(5,4)} gives a domino tiling of $O(5,4)$, one can check that the five cells along the vertical diagonal are all assigned $0$ (each such cell contains exactly one complete domino).
\begin{figure}[htb!]
    \centering
    \subfigure[]
    {\label{fig.O(5,4)}\includegraphics[width=0.4\textwidth]{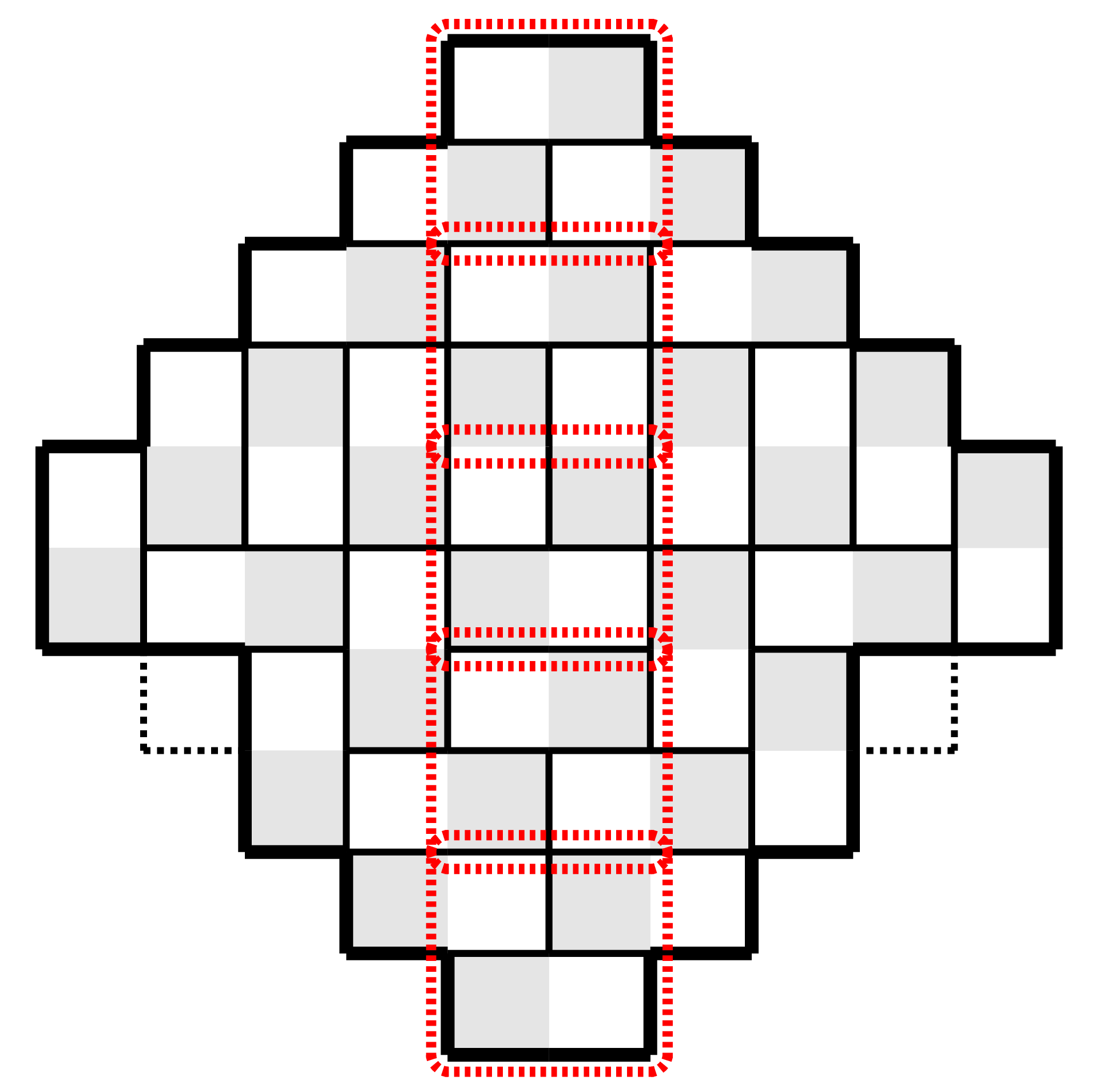}}
    \hspace{5mm}
    \subfigure[]
    {\label{fig.O(6,5)-forced}\includegraphics[width=0.4\textwidth]{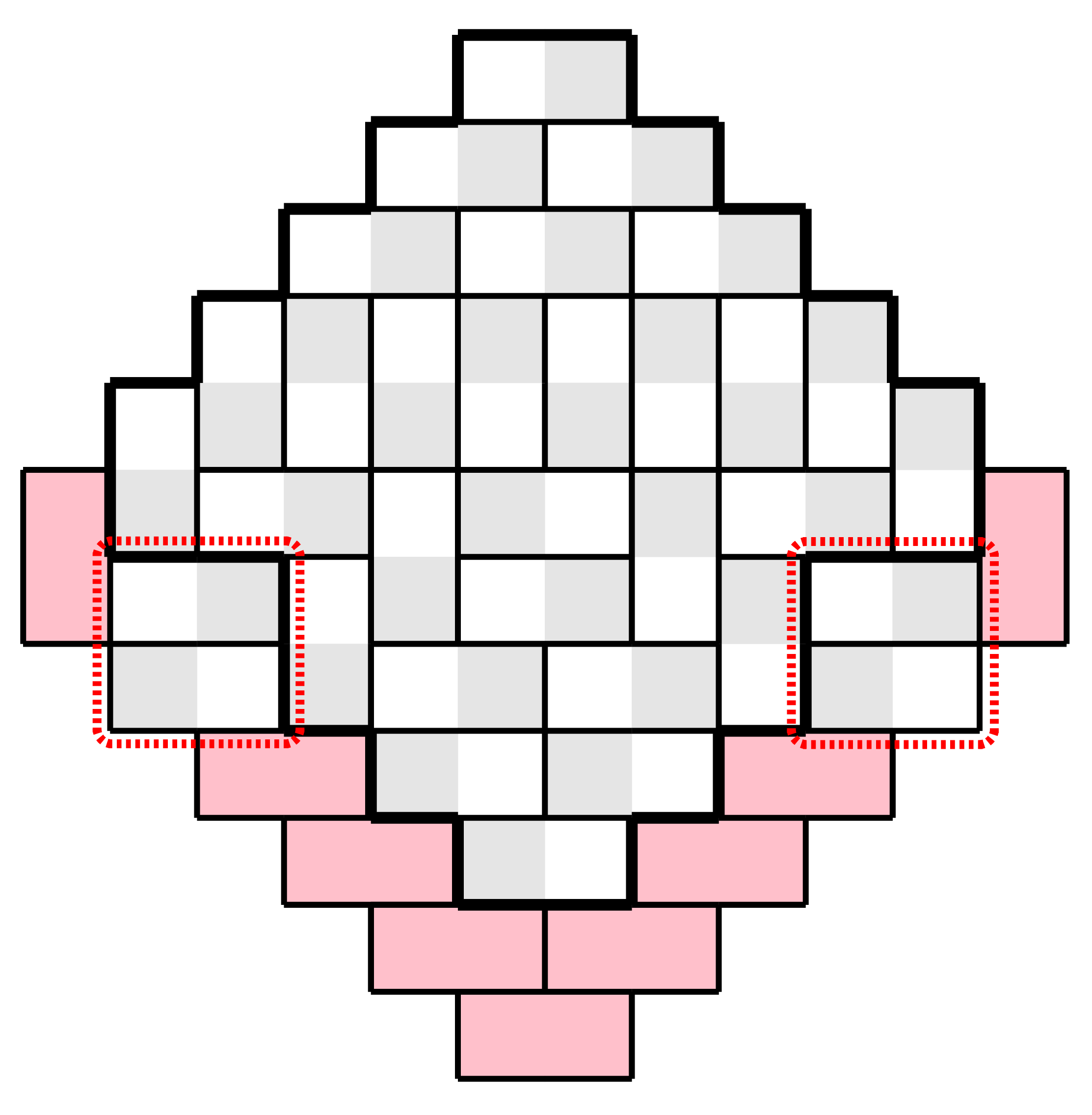}}
    \caption{(a) A domino tiling of $O(5,4)$. (b) An off-diagonally symmetric domino tiling of $AD(6)$ where the cell at the fifth position on the southwestern and southeastern boundary is assigned $1$, and the forced dominoes are colored in pink.}
\end{figure}

Our first main result states the following symmetry property of $|O(2n-1;k)|$:
\begin{theorem}\label{thm.main1}
  The number of off-diagonally symmetric domino tilings of the Aztec diamond of order $2n-1$ with one boundary defect is symmetric in the following sense:
  \begin{equation}\label{eq.offsym}
    |O(2n-1;k)| = |O(2n-1;2n-k)|,
  \end{equation}
  for $k=1,2,\dotsc,n$.
\end{theorem}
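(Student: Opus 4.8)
The plan is to realize the identity \eqref{eq.offsym} as a consequence of Theorem~\ref{thm.OffPfaffian} by establishing a Pfaffian identity of the form $\pf(A_{[2n-1]\setminus\{k\}}) = \pf(A_{[2n-1]\setminus\{2n-k\}})$ for the specific skew-symmetric matrix $A$ defined by the recurrence \eqref{eq.rec}. Writing $I_k := [2n-1]\setminus\{k\}$, both sides are Pfaffians of $(2n-2)\times(2n-2)$ principal submatrices of the same matrix $A$ (truncated to its first $2n-1$ rows and columns), obtained by deleting row/column $k$ versus row/column $2n-k$. So the task reduces to a purely linear-algebraic statement about $A$; the combinatorial input (the bijection with non-intersecting Delannoy paths) has already been packaged into Theorem~\ref{thm.OffPfaffian}, and I would not reopen it.

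The key step is to exhibit the right change of basis. Concretely, I would look for an invertible matrix $T = T_n$, ideally unipotent lower- or upper-triangular with $\pm1$-type entries built from binomial/Delannoy-style coefficients, such that conjugation $A \mapsto T A T^{\mathsf t}$ (which preserves skew-symmetry and multiplies the Pfaffian by $\det T = \pm 1$) carries the submatrix structure for $I_k$ to that for $I_{2n-k}$ — in effect realizing the ``index reversal'' $k \leftrightarrow 2n-k$ as a symmetry of $A$ up to this conjugation. The natural candidate is the anti-diagonal/reversal permutation on $\{1,\dots,2n-1\}$ composed with a correction: the recurrence \eqref{eq.rec} is a Pascal-type relation with the ``$+2(-1)^{i-1}$'' correction on the superdiagonal, and such relations typically have a hidden reflection symmetry after passing to generating functions or after a triangular substitution $a_{i,j} \mapsto \sum_{p,q} c_{i,p} c_{j,q} a_{p,q}$ with $c$ a binomial matrix. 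I would first compute small cases ($n=2,3$) to pin down $T$ explicitly, then verify that the conjugated recurrence closes up, i.e.\ that $T A T^{\mathsf t}$ again satisfies a recurrence for which deleting index $k$ matches deleting index $2n-k$ in the original $A$.

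An alternative, possibly cleaner route is to bypass $A$ entirely and argue the symmetry directly on the lattice-path side: $|O(2n-1;k)|$ counts $(2n-1)$-tuples of non-intersecting Delannoy paths with one free starting point (at level $k$) and the others paired off, and one can hope to produce an explicit measure-preserving involution on the underlying region — reflecting the Aztec diamond and simultaneously relabeling the boundary unit squares $1,\dots,2n-1$ by $i \mapsto 2n-i$ — that carries $O(2n-1;k)$ bijectively onto $O(2n-1;2n-k)$. This would in fact reprove the Pfaffian identity as a corollary. The main obstacle, in either approach, is that the correction term ``$+2(-1)^{i-1}$'' in \eqref{eq.rec} breaks the naive symmetry: the matrix $A$ is \emph{not} invariant under the reversal permutation on its first $2n-1$ indices, so one genuinely needs the triangular correction $T$, and showing that $T$ both is unimodular and conjugates the one submatrix family onto the other is where the real work lies. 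I expect the parity bookkeeping in the sign $(-1)^{i-1}$ to be the delicate point, and I would organize the proof around a lemma that identifies $T A T^{\mathsf t}$ with the reversal of $A$ exactly.
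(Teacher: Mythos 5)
There is a genuine gap: your proposal is a plan whose essential content is precisely the part that is not carried out, and the paper's own closing Remark of Section~\ref{sec.symmetry} states that both of your routes are open. The author explicitly notes that proving $\pf(A_{[2n-1]\setminus\{k\}})=\pf(A_{[2n-1]\setminus\{2n-k\}})$ directly from Theorem~\ref{thm.OffPfaffian} ``seems challenging, and no progress has been made so far,'' and that a direct bijection between $O(2n-1;k)$ and $O(2n-1;2n-k)$ is likewise left to the interested reader. Beyond the fact that you never exhibit the matrix $T$, there are concrete obstacles you do not address: conjugation $A\mapsto TAT^{\mathsf{t}}$ does not commute with passing to principal submatrices (the $I$-indexed principal submatrix of $TAT^{\mathsf{t}}$ is $T_{I,\cdot}A(T_{I,\cdot})^{\mathsf{t}}$, not $T_{I,I}A_{I,I}(T_{I,I})^{\mathsf{t}}$), so a unipotent triangular $T$ with $TAT^{\mathsf{t}}$ equal to the reversal of $A$ would not by itself transport $\pf(A_{I_k})$ to $\pf(A_{I_{2n-k}})$; one would essentially need $T$ to be a signed permutation, and no signed permutation can absorb the $2(-1)^{i-1}$ correction in \eqref{eq.rec}. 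Similarly, the reflection of $AD(2n-1)$ across the horizontal diagonal moves the defect from the southwestern to the northwestern boundary and does \emph{not} land in $O(2n-1;2n-k)$; this is exactly why the paper's relation between the two enumerations is a nontrivial upper-triangular matrix rather than a permutation.

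What the paper actually does is quite different and none of it appears in your proposal. It enumerates $O(2n-1;k)$ a second time after the horizontal reflection, where the defect becomes an extra \emph{starting} point $w^{2n-1}_{2n-k}$ for the non-intersecting Delannoy paths; applying the modified Stembridge formula (Lemma~\ref{lemma.mod2}) and expanding the Pfaffian along its last column yields the linear relation $R_{2n-1}\mathbf{O}_{2n-1}=\mathbf{O}^{\mathsf{rev}}_{2n-1}$ with $r^{n}_{i,j}=Q_{V^{*}}(u_j,w^{n}_i)$ (Proposition~\ref{prop.Oreverse}). The heart of the proof is then establishing that $R_{n}$ is upper unitriangular, translation invariant, satisfies a three-term recurrence, and is an \emph{involution} --- the last via generating-function identities involving Schr\"oder numbers --- after which the difference vector $v_k=|O(2n-1;k)|-|O(2n-1;2n-k)|$ is shown to satisfy a homogeneous system $(X_{2n-1}-Y_{2n-1})\mathbf{v}=\mathbf{0}$ whose matrix has full rank, forcing $\mathbf{v}=\mathbf{0}$. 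To turn your sketch into a proof you would either have to supply this entire chain or actually construct and verify your matrix $T$, which, per the paper, is an unresolved problem.
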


We close this subsection by comparing our symmetry property with a recent conjecture proposed by Behrend, Fischer, and Koutschan \cite[Section 8.5]{BFK23}. The connection between alternating sign matrices (ASMs) and domino tilings of the Aztec diamond was initially discovered in \cite{ELKP1, ELKP2}. It was found that the number of domino tilings of the Aztec diamond can be expressed as a weighted enumeration, known as the \textit{$2$-enumeration} of ASMs based on the number of $1$'s or $-1$'s in a matrix. Let $\text{ASM}(n)$ denote the set of ASMs of order $n$. The weighted enumeration is given by
\begin{equation}\label{eq.2-enumasm}
  \M(AD(n)) = \sum_{X \in \text{ASM}(n)}2^{\mathsf{N}_{+}(X)} = \sum_{X \in \text{ASM}(n+1)}2^{\mathsf{N}_{-}(X)},
\end{equation}
where $\mathsf{N}_{+}(X)$ (resp., $\mathsf{N}_{-}(X)$) is the number of $1$'s (resp., $-1$'s) in the matrix $X$.

Off-diagonally symmetric domino tilings of $AD(2n)$ can be regarded as the $2$-enumeration (based on the number of $1$'s) of off-diagonally symmetric ASMs (OSASMs) of order $2n$. We now consider the region $AD(2n)$, if the cell at the position $\tau, (2 \leq \tau \leq 2n)$ on the southwestern boundary (from bottom to top) is assigned $1$, then this leads to a forced domino tiling along the southwestern boundary of $AD(2n)$. Note that a cell that is assigned $1$ contains two complete dominoes---either two horizontal or two vertical dominoes. Due to symmetry, the same occurs on the southeastern boundary. Consequently, the remaining region is $AD(2n-1)$ with one unit square at position $\tau-1$ removed. Figure \ref{fig.O(6,5)-forced} shows an example of a domino tiling of $AD(6)$ with $\tau = 5$ and the forced dominoes are colored pink. The domino tiling of the remaining region is given in Figure \ref{fig.O(5,4)}.

Let $\text{OSASM}(2n)$ denote the set of OSASMs of order $2n$. For a given $X \in \text{OSASM}(2n)$, let $\mathsf{R}(X)$ be the number of nonzero strictly upper triangular entries in $X$ and $\mathsf{T}(X)$ be the column of $1$ in the first row of $X$, ($2 \leq \mathsf{T}(X) \leq 2n$). It is not hard to see that for any matrix $X \in \text{OSASM}(2n)$, $\mathsf{N}_{+}(X) = \mathsf{N}_{-}(X) + 2n$ and $2 \mathsf{R}(X) = \mathsf{N}_{+}(X) + \mathsf{N}_{-}(X)$. Once $\mathsf{R}(X) = \rho$ is given, we have $\mathsf{N}_{+}(X) = \rho +n$ and $\mathsf{N}_{-}(X) = \rho - n$. Set $S(2n;\rho,\tau) = \{ X \in \text{OSASM}(2n) : \mathsf{R}(X) = \rho, \mathsf{T}(X) = \tau\}$. Considering the $2$-enumeration of ASMs mentioned earlier, we have
\begin{equation}\label{eq.2-enumoff}
  2 \cdot |O(2n-1,\tau-1)| = \sum_{\rho \geq 0} |S(2n;\rho,\tau)|2^{\rho+n},
\end{equation}
for $2 \leq \tau \leq 2n$.

The following symmetry property for OSASMs was conjectured in \cite[Conjecture 16]{BFK23}:
\begin{conjecture}[Behrend, Fischer and Koutschan \cite{BFK23}]\label{conj.BFK}
  Given $n$, for any $\rho$ and $\tau$, we have
  \begin{equation}\label{eq.BFKconj}
    |S(2n;\rho,\tau)| = |S(2n;\rho,2n+2-\tau)|.
  \end{equation}
\end{conjecture}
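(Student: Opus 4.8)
The plan is to prove the stronger, $\rho$-refined statement by upgrading the two-enumeration identity behind Theorem~\ref{thm.main1} to an identity of one-variable generating polynomials. Introduce a formal parameter $t$ recording the statistic $\mathsf{R}$ and set
\[
  P_\tau(t) \;=\; \sum_{\rho \ge 0} |S(2n;\rho,\tau)|\, t^{\rho},
\]
so that Conjecture~\ref{conj.BFK} is \emph{equivalent} to the polynomial identity $P_\tau(t) = P_{2n+2-\tau}(t)$ for all $2 \le \tau \le 2n$. Its specialization at $t = 2$ returns, via~\eqref{eq.2-enumoff}, exactly the content of Theorem~\ref{thm.main1}, since $\sum_\rho |S(2n;\rho,\tau)|\,2^{\rho+n} = 2^{n} P_\tau(2)$ and the factor $2^{n}$ does not depend on $\tau$. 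The goal is therefore to show that the entire polynomial, and not merely its value at $t=2$, is invariant under $\tau \mapsto 2n+2-\tau$.

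The first step is to refine the bijection behind Theorem~\ref{thm.OffPfaffian}. In the two-enumeration, each cell assigned $+1$ contributes the local factor $2$; replacing this base by the variable $t$ produces the weighted count $\sum_{X} t^{\mathsf{N}_{+}(X)} = t^{n} P_\tau(t)$ over the relevant OSASMs. Under the correspondence with families of non-intersecting Delannoy paths with paired endpoints, this amounts to weighting the diagonal steps of the paths by $t$, so I expect it to yield a $t$-deformation $A(t)=[a_{i,j}(t)]$ of the matrix $A$ in~\eqref{eq.rec}: the initial value $a_{1,j}=2$ becomes $1+t$, the diagonal term $a_{i-1,j-1}$ acquires the weight $t$, and the superdiagonal correction $2(-1)^{i-1}$ is replaced by its $t$-analogue. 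Re-running the Lindström--Gessel--Viennot and Stembridge evaluation with these weights should give
\[
  \pf\!\big(A(t)_{[2n-1]\setminus\{\tau-1\}}\big) \;=\; t^{n}\,P_\tau(t),
\]
with the prefactor $t^{n}$ again independent of $\tau$.

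Granting this refined formula, the desired symmetry $P_\tau(t)=P_{2n+2-\tau}(t)$ is the $t$-deformed Pfaffian-minor identity
\[
  \pf\!\big(A(t)_{[2n-1]\setminus\{k\}}\big) \;=\; \pf\!\big(A(t)_{[2n-1]\setminus\{2n-k\}}\big),
  \qquad k = \tau-1,
\]
which at $t=2$ is precisely the identity underlying Theorem~\ref{thm.main1}. I would establish it by re-running the linear-algebra argument that proves the undeformed case --- performing the same row and column operations dictated by the recurrence~\eqref{eq.rec}, now carrying the $t$-weights --- and verifying that the cancellations responsible for the symmetry survive the deformation.

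The hard part will be the combination of the second step and the persistence of the symmetry. Pinning down the $t$-weight is delicate because the factor $2$ in the two-enumeration conflates the $+1$ cells lying above and below the vertical diagonal, while the parity relations $\mathsf{N}_{\pm}=\rho\pm n$ must be respected; one must check that the deformed entries $a_{i,j}(t)$ still satisfy a recurrence closed in the same triangular shape as~\eqref{eq.rec}, so that the Pfaffian identity even makes sense. Should the elementary row and column manipulation fail to survive the deformation, the fallback is to realize $t^{n}P_\tau(t)$ as a partition function of the six-vertex model with a reflecting boundary --- the natural setting for OSASMs --- where the dependence on $\tau$ enters through a single boundary spectral parameter; the reflection $\tau \mapsto 2n+2-\tau$ should then correspond to a crossing symmetry of the associated Izergin--Korepin-type Pfaffian, from which the refined identity would follow.
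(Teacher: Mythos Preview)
The statement you are trying to prove is a \emph{conjecture} in the paper, not a theorem: the paper does not prove Conjecture~\ref{conj.BFK} and explicitly presents Theorem~\ref{thm.main1} only as the $t=2$ specialization~\eqref{eq.BFKequiv}. So there is no ``paper's own proof'' to compare against; you are attempting something strictly stronger than anything established here.

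On the substance of your plan, there are two concrete gaps. First, every step past the reformulation $P_\tau(t)=P_{2n+2-\tau}(t)$ is speculative: you do not verify that the weighted lattice-path model actually produces a matrix $A(t)$ whose entries obey a closed recurrence of the same shape as~\eqref{eq.rec}, nor that the Pfaffian of its principal submatrix equals $t^nP_\tau(t)$. These are genuine claims that need proof, not expectations; in particular, the superdiagonal correction $2(-1)^{i-1}$ in~\eqref{eq.rec} arises from a delicate cancellation (see~\cite[Corollary~5]{Lee23}), and there is no a~priori reason its $t$-analogue stays a first-order correction rather than a polynomial in $t$ that breaks the recursion.

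Second, your description of the $t=2$ argument as ``row and column operations dictated by the recurrence'' does not match what the paper actually does. The proof of Theorem~\ref{thm.main1} in Section~\ref{sec.symmetry} is not a direct Pfaffian manipulation: it constructs an auxiliary upper-triangular matrix $R_{2n-1}$ from pairs of non-intersecting paths (Proposition~\ref{prop.Oreverse}), proves via generating functions that $R_{2n-1}$ is \emph{involutory}, and then extracts the symmetry from the matrix equation $R_{2n-1}\mathbf{O}_{2n-1}=\mathbf{O}_{2n-1}^{\mathsf{rev}}$ by a rank argument. To lift this to general $t$ you would need a $t$-deformed involutory matrix $R_{2n-1}(t)$, and the involution property in the paper hinges on the explicit generating function~\eqref{eq.gfseq} satisfying $G(y)G(-y)=1$; whether a $t$-analogue of $G$ retains this multiplicative symmetry is the crux, and your proposal does not address it. The six-vertex fallback is reasonable as a direction but is likewise only a hope, not an argument.
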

Combining \eqref{eq.offsym} and \eqref{eq.2-enumoff}, our first main result (Theorem~\ref{thm.main1}) is equivalent to the following special case (weighted sum) of Conjecture \ref{conj.BFK}. For $2 \leq \tau \leq n$,
\begin{equation}\label{eq.BFKequiv}
  \sum_{\rho \geq 0} |S(2n;\rho,\tau)|2^{\rho} = \sum_{\rho \geq 0} |S(2n;\rho,2n+2-\tau)|2^{\rho}.
\end{equation}
We hope our method of proving Theorem \ref{thm.main1} could shed new light on studies of the symmetry property for OSASMs.

\subsection{Nearly off-diagonally symmetric domino tilings of the Aztec diamond}\label{sec.nearoff}

In the second direction, we consider \textit{nearly off-diagonally symmetric domino tilings} of $AD(2n-1)$ (without any boundary defects), where only one cell on the diagonal of $AD(2n-1)$ is assigned either $-1$ or $1$, while the other $2n-2$ cells are assigned $0$. According to the position of this specific cell on the diagonal and the value assigned to it, we naturally consider the following cases of nearly off-diagonally symmetric domino tilings of $AD(2n-1)$, denoted by $D^{*}(2n-1;k)$, where the superscript $*$ represents $+,-$ or $\pm$. They are defined as follows.
\begin{itemize}
  \item $D^{+}(2n-1;k)$. The $k$th cell (from bottom to top) on the diagonal is assigned $1$.
  \item $D^{-}(2n-1;k)$. The $k$th cell on the diagonal is assigned $-1$.
  \item $D^{\pm}(2n-1;k)$. The $k$th cell on the diagonal is assigned either $1$ or $-1$. Note that $D^{\pm}(2n-1;k) = D^{+}(2n-1;k) \cup D^{-}(2n-1;k)$.
  \item $D(2n-1) = \bigcup_{k=1}^{2n-1} D^{\pm}(2n-1;k)$.
\end{itemize}

Figure \ref{fig.D+(5,4)} (resp., Figure \ref{fig.D-(5,4)}) illustrates a domino tiling of $D^{+}(5,4)$ (resp., $D^{-}(5,4)$). One can check that the fourth cell (marked by the red dotted box) in Figure \ref{fig.D+(5,4)} (resp., Figure \ref{fig.D-(5,4)}) is assigned $1$ (resp., $-1$), while the other cells on the diagonal are assigned $0$.
\begin{figure}[htb!]
    \centering
    \subfigure[]
    {\label{fig.D+(5,4)}\includegraphics[width=0.4\textwidth]{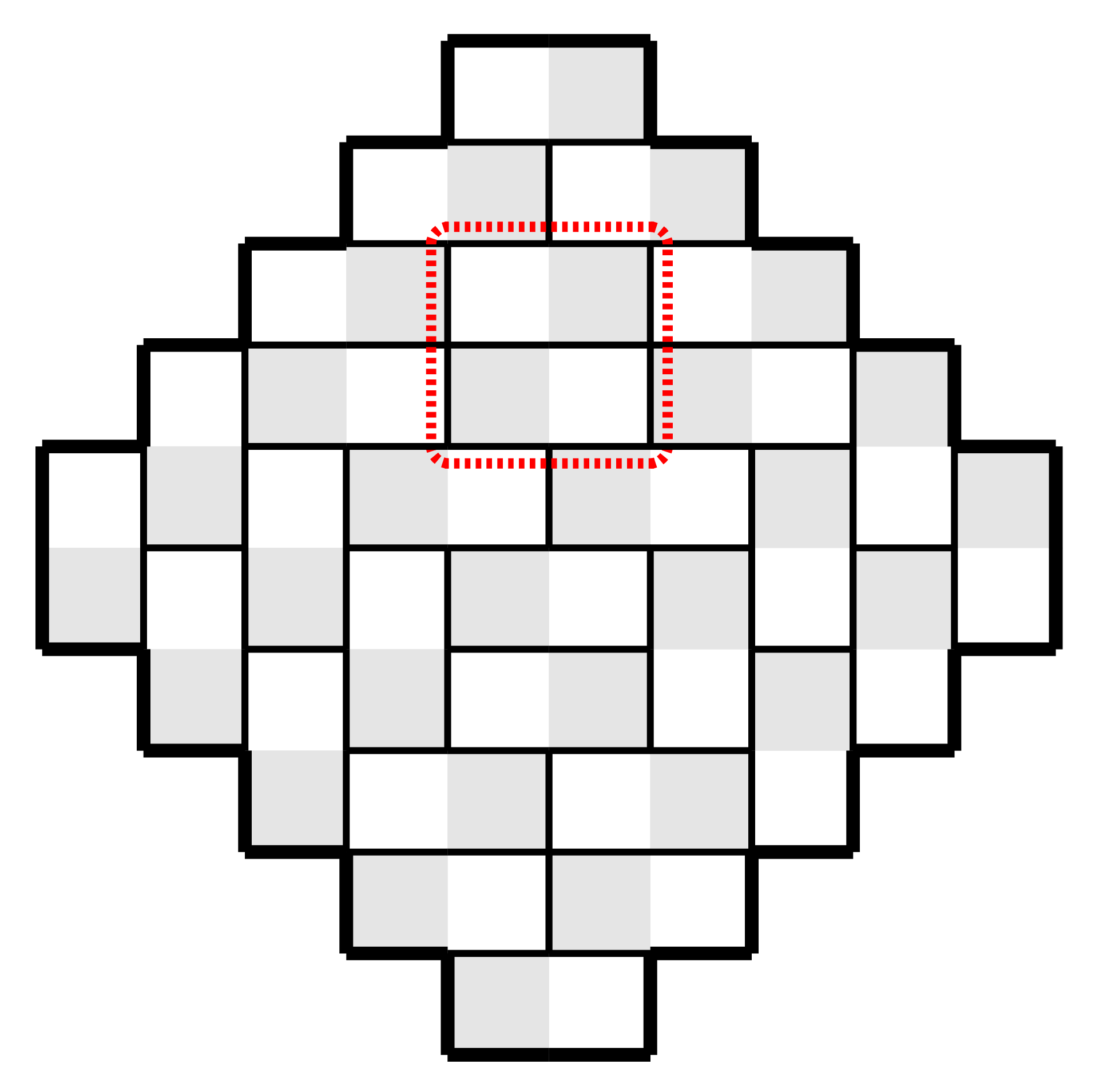}}
    \hspace{5mm}
    \subfigure[]
    {\label{fig.D-(5,4)}\includegraphics[width=0.4\textwidth]{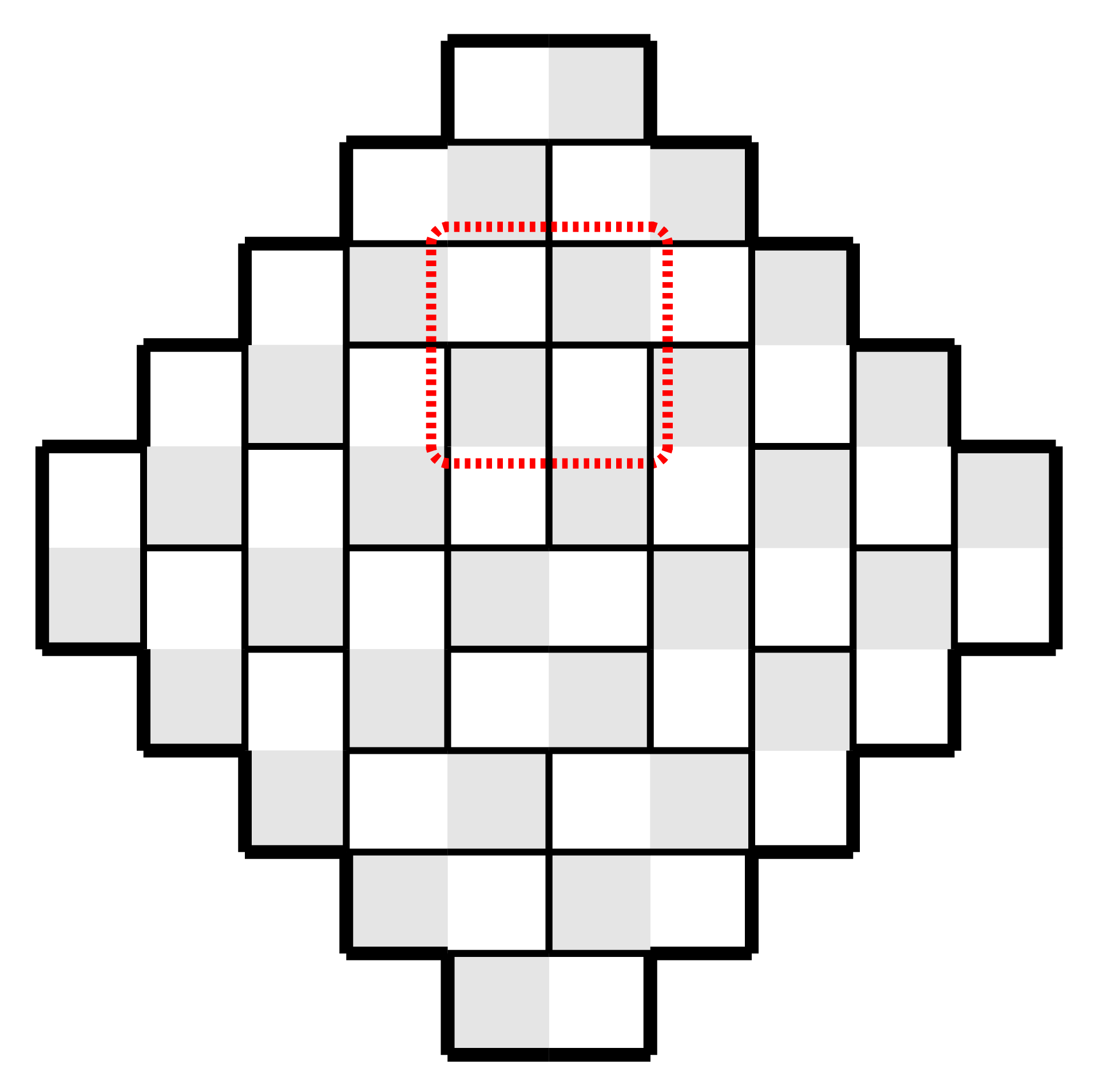}}
    \caption{(a) A domino tiling of $D^{+}(5,4)$. (b) A domino tiling of $D^{-}(5,4)$.}
\end{figure}

In \cite[Page 3]{BFK23}, Behrend, Fischer, and Koutschan introduced the notion of odd-order OSASMs by defining them as diagonally symmetric alternating sign matrices with \textit{maximally-many} $0$'s on the diagonal. Specifically, only one diagonal entry is non-zero while the other diagonal entries are zero. Remarkably, our last case $D(2n-1)$, the set of nearly off-diagonally symmetric domino tilings of $AD(2n-1)$, can be viewed as the $2$-enumeration of OSASMs of order $2n-1$. Although a conjectured product formula for odd-order OSASMs is presented in \cite[Conjecture 15]{BFK23}, our data shows that $|D(2n-1)|$ contains some large prime factors, making it unlikely to have a simple product formula. Instead, we provide a Pfaffian expression of $|D(2n-1)|$, the entries of the Pfaffian satisfy a simple recurrence relation which is similar to the one stated in Theorem \ref{thm.OffPfaffian}.

Let us first define a sequence $(f_n)_{n \geq 1}$ recursively:
\begin{equation}\label{eq.pell}
  f_n = 2f_{n-1} + f_{n-2}, \text{ $(n>2)$, with $f_1 = 2$ and $f_2=4$.}
\end{equation}
The sequence $(f_n)_{n \geq 1}$ is indeed twice the Pell numbers (\cite[A000129]{OEIS}). Let $\mathbf{f}_n = (f_1,f_2,\dotsc,f_n)^{\intercal}$ be the column vector consisting of the first $n$ terms of the sequence $(f_n)_{n \geq 1}$. Define a skew-symmetric matrix $B_{2n}$ of order $2n$ in the following block form:
\begin{equation}\label{eq.matrixB}
  B_{2n} =
  \begin{bmatrix}
    A_{[2n-1]} & \mathbf{f}_{2n-1}  \\
    -(\mathbf{f}_{2n-1})^{\intercal} & 0
  \end{bmatrix},
\end{equation}
where the matrix $A_{[2n-1]}$ is obtained from \eqref{eq.rec} by taking the first $2n-1$ rows and columns.

Now, we are ready to state our second main result.
\begin{theorem}\label{thm.main2}
  The number of nearly off-diagonally symmetric domino tilings of $AD(2n-1)$ is given by
    \begin{equation}\label{eq.nearlyoff}
      |D(2n-1)| = \pf(B_{2n}).
    \end{equation}
  where the matrix $B_{2n}$ is defined above in \eqref{eq.matrixB}.
\end{theorem}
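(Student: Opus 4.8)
The plan is to prove Theorem~\ref{thm.main2} by the same circle of ideas that yields Theorem~\ref{thm.OffPfaffian} --- the bijection with non-intersecting Delannoy paths together with a Pfaffian formula of Stembridge type --- now adapted to allow one path with an unconstrained endpoint. First I would invoke the bijection recalled in Section~\ref{sec.methodpath} between domino tilings of $AD(2n-1)$ and $(2n-1)$-tuples of non-intersecting Delannoy paths, and translate the defining conditions of $D(2n-1)$ to the path side. Exactly as in \cite[Lemma~7]{Lee23}, a cell on the vertical diagonal assigned $0$ forces the corresponding path to terminate at one of a mirror-symmetric pair of endpoints, whereas the unique cell assigned $+1$ or $-1$ leaves its path free to end at a lattice point determined by the position $k$ of that cell and by the chosen sign. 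Hence $D(2n-1)=\bigcup_{k,\pm}D^{\pm}(2n-1;k)$ is in bijection with the set of non-intersecting $(2n-1)$-tuples in which $2n-2$ of the paths reach $n-1$ symmetric pairs of sinks while the remaining path ends anywhere in an explicit set $E$ of ``free'' endpoints, obtained by letting $k$ run over $1,\dots,2n-1$ and the sign over $\{+,-\}$.

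Next I would apply the modification of Stembridge's Pfaffian formula. For a non-intersecting family in which all sources but one are matched into pairs of symmetric sinks and the last path is free, the number of such families equals the Pfaffian of the bordered skew-symmetric matrix
\begin{equation*}
  \begin{bmatrix} A_{[2n-1]} & \mathbf{v} \\ -\mathbf{v}^{\intercal} & 0 \end{bmatrix},
\end{equation*}
where the $(2n-1)\times(2n-1)$ block consists of the Pfaffian entries attached to the $n-1$ symmetric sink pairs --- this is literally the matrix $A$ of \eqref{eq.rec} restricted to its first $2n-1$ rows and columns, by the very computation that proves \eqref{eq.main1} --- and the $i$th border entry $v_i$ is the number of single Delannoy paths from the $i$th source to the free-endpoint set $E$. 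The most direct justification is via the Pfaffian expansion along the last row: $\pf$ of the bordered matrix is an alternating sum of $v_i$ times $\pf(A_{[2n-1]\setminus\{i\}})$, and the latter counts the remaining $2n-2$ matched paths exactly as in Theorem~\ref{thm.OffPfaffian}, so summing over $i$ records which path is the free one; the usual Lindstr\"om--Gessel--Viennot / Stembridge sign-reversing involution disposes of intersecting configurations.

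Finally I would compute the border vector. Summing the Delannoy path recurrence over the set $E$ of free endpoints shows that $v_i = 2v_{i-1} + v_{i-2}$ for $i>2$, and a direct count of the (few) single paths from the first two sources gives $v_1 = 2$ and $v_2 = 4$. Comparing with \eqref{eq.pell} we get $v_i = f_i$, hence $\mathbf{v} = \mathbf{f}_{2n-1}$, the bordered matrix is exactly $B_{2n}$, and $|D(2n-1)| = \pf(B_{2n})$.

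I expect the main obstacle to be the middle steps rather than the end ones. Identifying the set $E$ precisely, checking that distinct pairs $(k,\pm)$ yield distinct free endpoints so that the union $\bigcup_{k,\pm}D^{\pm}(2n-1;k)$ is disjoint, and --- above all --- setting up the free-endpoint version of the Pfaffian formula with the correct orientation and sign conventions so that the inner block comes out as $A_{[2n-1]}$ with \emph{no} extra signs and the border appears with coefficient $+1$ (rather than as a signed variant of $\mathbf{f}_{2n-1}$), is where the real work lies. Once that bookkeeping is in place, the recurrence for the $f_n$ and the identification of the inner block follow routinely from the Delannoy structure already analyzed in \cite{Lee23}.
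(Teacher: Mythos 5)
Your overall strategy is the same as the paper's: translate $D(2n-1)$ into $(2n-1)$-tuples of non-intersecting paths in which $2n-2$ endpoints are paired into doublets and one endpoint is free, apply the doublet modification of Stembridge's formula (Lemma~\ref{lemma.mod2}) with a one-column border, and identify the border vector with twice the Pell numbers via anti-diagonal sums of Delannoy numbers. The bijection step and the computation of $\mathbf{f}_{2n-1}$ match the paper and are fine.

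The genuine gap is in the middle step, and it is not merely the ``orientation and sign conventions'' bookkeeping you flag at the end. The bordered Pfaffian (equivalently, Lemma~\ref{lemma.mod2} with $m=1$) computes the \emph{signed} sum $\sum_{\pi}\sgn(\pi)\,|\mathscr{P}_0^{\pi}(U,\{\overline{v}\}\cup V^{*})|$ over connection types, and in this situation $U$ and $\{\overline{v}\}\cup V^{*}$ are \emph{not} compatible: for a fixed free sink $\overline{v}$, non-intersecting families exist for several permutations $\pi$, not only the identity. Your cofactor-expansion argument meets the same issue in a different guise: the expansion gives $\pf(B_{2n})=\sum_{\ell}(-1)^{\ell-1}f_{\ell}\,|O(2n-1;\ell)|$, an alternating sum, and after the sign-reversing involution cancels the intersecting configurations, each surviving non-intersecting family is counted with the sign $(-1)^{j_0-1}$, where $u_{j_0}$ is the source of the free path (equivalently, with $\sgn(\pi)$ for its connection type). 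One must still prove that this sign is always $+1$. The paper does this by listing the possible connection types explicitly: each is a cycle of odd length $2\ell-1$ --- the free path necessarily starts at an odd-indexed source, because the free sink breaks one doublet and forces a parity shift on the matching of all sources below it --- hence an even permutation. Without this verification the identity $|D(2n-1)|=\pf(B_{2n})$ could in principle fail by signs, so this step must be supplied rather than deferred to ``the usual involution.''
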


Our data shows that the numbers of domino tilings that we considered in the first direction $|O(2n-1;k)|$, and in the second direction $|D^{*}(2n-1;k)|$ ($*$ represents $+,-$ or $\pm$), do contain some large prime factors. Again, having a product formula seems unlikely. In the remainder of this subsection, we will present simple matrix equations that connect the number of these objects mentioned above.

We view the cardinality of these $2n-1$ sets ($k$ ranges from $1$ to $2n-1$) as column vectors.
\begin{align*}
  \mathbf{O}_{2n-1} &= \left( |O(2n-1;1)|,|O(2n-1;2)|,\dotsc,|O(2n-1;2n-1)| \right)^{\intercal}, \\
  \mathbf{D}^{*}_{2n-1} &= \left( |D^{*}(2n-1;1)|,|D^{*}(2n-1;2)|,\dotsc,|D^{*}(2n-1;2n-1)| \right)^{\intercal},
\end{align*}
where the superscript $*$ represents $+,-$ or $\pm$.

Let $M^{\pm} = [m^{\pm}_{i,j}]_{i,j \geq 1}$ be an infinite matrix, where the entries $m^{\pm}_{i,j} = 2(-1)^{j-1} d_{j-i,i-1}$, and $d_{p,q}$ is the \textit{Delannoy number}\footnote{The Delannoy numbers can be defined recursively $d_{p,q} = d_{p-1,q} + d_{p,q-1} + d_{p-1,q-1}$, with initial values $d_{p,0} = d_{0,q} = 1$ for $p,q \geq 0$, and setting $d_{p,q} = 0$ if $p<0$ or $q<0$. This number counts \textit{Delannoy paths} which are lattice paths going from $(0,0)$ to $(p,q)$ ($p,q \geq 0$), using steps $(1,0)$, $(0,1)$ or $(1,1)$ on the triangular lattice (see for example \cite[Section 2]{BS05} and \cite[Section 2.2]{Lee23}).}. We note that this matrix is indeed upper triangular, as the entries are $0$ when $i>j$. Let $M^{-} = [m^{-}_{i,j}]_{i,j \geq 1}$ be another infinite matrix, where the entries $m^{-}_{i,j} = 2(-1)^{j-1} d_{j-1-i,i-1}$. Define $M^{+} = M^{\pm} - M^{-}$. Our third main result is stated below.
\begin{theorem}\label{thm.main3}
  We have the following three matrix equations
  \begin{equation}\label{eq.matrixod}
    M^{*}_{[2n-1]} \mathbf{O}_{2n-1} = \mathbf{D}^{*}_{2n-1},
  \end{equation}
  where $M^{*}_{[2n-1]}$ is the matrix obtained from $M^{*}$ by taking the first $2n-1$ rows and columns, and the superscript $*$ represents $+,-$ and $\pm$, respectively.
\end{theorem}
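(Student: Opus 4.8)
The plan is to reduce Theorem~\ref{thm.main3} to a \emph{refined} version of Theorem~\ref{thm.main2} that pins down each individual $|D^{*}(2n-1;k)|$ as a single bordered Pfaffian, and then to expand that Pfaffian along its border so that Theorem~\ref{thm.OffPfaffian} turns every term into some $|O(2n-1;j)|$. Concretely, I would first run the bijection of Section~\ref{sec.methodpath}, combined with the off-diagonal symmetry, on a tiling in $D^{*}(2n-1;k)$: it produces a family of $2n-1$ non-intersecting Delannoy paths with sources at positions $1,2,\dots,2n-1$, of which $2n-2$ have their endpoints paired off along the diagonal while exactly one path $P$ is left unpaired, and the data ``the distinguished diagonal cell sits in position $k$ and is assigned $\pm1$'' is recorded precisely by the endpoint $e_{*,k}$ of $P$ (the index $k$ fixing its position, the superscript $*$ producing a one-step shift). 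Applying the modification of Stembridge's Pfaffian formula used to prove Theorem~\ref{thm.main2}, but now with the free endpoint frozen at $e_{*,k}$ rather than summed over all admissible free endpoints, I expect to obtain
\begin{equation*}
  |D^{*}(2n-1;k)| \;=\; \pf
  \begin{bmatrix}
    A_{[2n-1]} & \mathbf{v}^{*,(k)} \\
    -(\mathbf{v}^{*,(k)})^{\intercal} & 0
  \end{bmatrix},
\end{equation*}
where $v^{*,(k)}_j$ is the (weighted, signed) number of single Delannoy paths from the source at position $j$ to $e_{*,k}$. Summing this identity over $k$ must restore $\mathbf{f}_{2n-1}$ in the border --- this is the row-sum identity $\sum_{p+q=j-1} d_{p,q} = P_j$ for the Pell numbers, together with $f_j = 2P_j$ --- which both recovers Theorem~\ref{thm.main2} and serves as a consistency check.

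Next I would apply the Laplace-type expansion of a Pfaffian of order $2n$ along its last row, which gives
\begin{equation*}
  |D^{*}(2n-1;k)| \;=\; \sum_{j=1}^{2n-1} (-1)^{j+1}\, v^{*,(k)}_j\; \pf\!\bigl(A_{[2n-1]\setminus\{j\}}\bigr)
  \;=\; \sum_{j=1}^{2n-1} (-1)^{j+1}\, v^{*,(k)}_j\; |O(2n-1;j)|,
\end{equation*}
where the last step is Theorem~\ref{thm.OffPfaffian}. Comparing the right-hand side with the $k$th entry of $M^{*}_{[2n-1]}\mathbf{O}_{2n-1}$, equation~\eqref{eq.matrixod} becomes equivalent to the coefficient identities $(-1)^{j+1}v^{*,(k)}_j = m^{*}_{k,j}$, that is,
\begin{equation*}
  v^{\pm,(k)}_j \;=\; 2\,d_{j-k,\;k-1}, \qquad v^{-,(k)}_j \;=\; 2\,d_{j-1-k,\;k-1},
\end{equation*}
for $* = \pm$ and $* = -$; the case $* = +$ then follows at once from $M^{+} = M^{\pm}-M^{-}$ together with the disjoint decomposition $D^{\pm}(2n-1;k) = D^{+}(2n-1;k) \sqcup D^{-}(2n-1;k)$. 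In particular $v^{*,(k)}_j = 0$ for $j < k$, in agreement with the stated upper-triangularity of $M^{*}$.

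It remains to verify the two coefficient identities above, and this is where I expect the genuine work to lie. A single Delannoy path from the source labelled $j$ to the frozen endpoint $e_{*,k}$ is confined to a sublattice of dimensions $(j-k)\times(k-1)$ in the $* = \pm$ case, and one unit shorter in the first coordinate in the $* = -$ case; hence its unsigned enumeration is exactly the displayed Delannoy number, while the sign $(-1)^{j+1}$ is produced by the modified Stembridge formula, and the uniform prefactor $2$ is the same boundary factor already visible in $a_{1,j}=2$ and $f_1=2$ (equivalently, the two symmetric local patterns available at the distinguished diagonal cell). The main obstacle is thus the lattice-path translation together with this count: making the dictionary between $(k,\pm1)$ and the free endpoint $e_{*,k}$ completely explicit, and then tracking the sign and the factor $2$ through the symmetry-folding carefully enough to land precisely on $(-1)^{j+1}\cdot 2\,d_{\,\cdot\,,\,\cdot}$. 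Once the bordered-Pfaffian formula for $|D^{*}(2n-1;k)|$ has been established, everything after it is the routine Pfaffian expansion displayed above followed by a comparison of coefficients, and it suffices to carry this out for two of the three symbols $\pm,-,+$.
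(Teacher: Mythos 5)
Your proposal is correct and follows essentially the same route as the paper: express each $|D^{*}(2n-1;k)|$ as a bordered Pfaffian with border column given by $H(v_{2k})\pm H(v_{2k-1})$, expand along that border so that Theorem~\ref{thm.OffPfaffian} converts the minors into the $|O(2n-1;\ell)|$, verify that the border entries are $2d_{\ell-k,k-1}$ and $2d_{\ell-k-1,k-1}$, and deduce the $+$ case from $M^{+}=M^{\pm}-M^{-}$. The only cosmetic difference is in how the coefficient identities are justified: in the paper the factor $2$ does not come from a single frozen endpoint with a boundary doubling, but from summing (resp.\ subtracting) the path counts to the two endpoints of the doublet $v_k^{*}$, splitting $u_\ell$ into $x_\ell$ and $x_{\ell-1}$, and collapsing the four resulting Delannoy numbers via the recurrence $d_{p,q}=d_{p-1,q}+d_{p,q-1}+d_{p-1,q-1}$ --- a short computation that fills in exactly the step you flagged as remaining.
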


We also show the following interesting identities regarding the classes of off-diagonally symmetric Aztec diamonds studied in this paper.
\begin{corollary}\label{cor.identity}
  Given a positive integer $n > 1$. We have the following identities:
\begin{equation}\label{eq.cor1}
  \frac{|O(2n-1;2)|}{|O(2n-1;1)|} = 2n-3 \text{\quad and \quad} \frac{|D^{\pm}(2n-1;2)|}{|D^{\pm}(2n-1;1)|} = 2n-2.
\end{equation}
We also have the following alternating sum identity:
\begin{equation}\label{eq.cor2}
  \sum_{k=1}^{2n-2}(-1)^{k-1} |O(2n-1;k)| = 0.
\end{equation}
\end{corollary}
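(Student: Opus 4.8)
The plan is to extract all three identities from the linear relations already in hand --- the matrix equation $R_{2n-1}\mathbf{O}_{2n-1}=\mathbf{O}^{\mathsf{rev}}_{2n-1}$ of Proposition~\ref{prop.Oreverse}, the equations $M^{*}_{[2n-1]}\mathbf{O}_{2n-1}=\mathbf{D}^{*}_{2n-1}$ of Theorem~\ref{thm.main3}, and the symmetry $|O(2n-1;k)|=|O(2n-1;2n-k)|$ of Theorem~\ref{thm.main1} --- plus one small combinatorial observation. For the first ratio in \eqref{eq.cor1} I would read off the $(2n-2)$nd scalar equation of $R_{2n-1}\mathbf{O}_{2n-1}=\mathbf{O}^{\mathsf{rev}}_{2n-1}$. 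Since $R_{2n-1}$ is upper triangular (Proposition~\ref{prop.Oreverse}(1)), that row has only two nonzero entries: the diagonal one, equal to $-1$, and the entry in column $2n-1$, which equals $r^{2n-1}_{2n-2,2n-1}$; by translation invariance (Proposition~\ref{prop.Oreverse}(2)) this is $r^{2n-1}_{1,2}$, and by \eqref{eq.r12} (equivalently, by solving the recurrence of Proposition~\ref{prop.Oreverse}(3)) it equals $2(2n-3)$. The equation therefore reads $-|O(2n-1;2n-2)|+2(2n-3)\,|O(2n-1;2n-1)|=|O(2n-1;2)|$; substituting $|O(2n-1;2n-2)|=|O(2n-1;2)|$ and $|O(2n-1;2n-1)|=|O(2n-1;1)|$ from Theorem~\ref{thm.main1} and collecting terms yields $|O(2n-1;2)|=(2n-3)\,|O(2n-1;1)|$.

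For the alternating-sum identity \eqref{eq.cor2} I would use the first scalar equation of $M^{-}_{[2n-1]}\mathbf{O}_{2n-1}=\mathbf{D}^{-}_{2n-1}$, whose $(1,\ell)$ coefficient is $2(-1)^{\ell-1}d_{\ell-2,0}$; since $d_{p,0}$ is $1$ for $p\ge 0$ and $0$ otherwise, this gives $|D^{-}(2n-1;1)|=2\sum_{\ell=2}^{2n-1}(-1)^{\ell-1}|O(2n-1;\ell)|$. Now $D^{-}(2n-1;1)=\emptyset$: under Ciucu's cell/ASM dictionary the bottom diagonal cell corresponds to a corner entry of the associated alternating sign matrix, which, being the first (or last) nonzero entry of its row, must be $+1$ and never $-1$ (equivalently, via Lemma~\ref{lemma.cell}, any path ending at $v_1$ on $\mathcal{D}(2n-1)$ is forced to start at $u_1$, and this forced path can be freely exchanged with the forced path $u_1\to v_2$ without affecting the rest, so $|\mathscr{P}_0(U,\{v_1\}\cup V^{*})|=|\mathscr{P}_0(U,\{v_2\}\cup V^{*})|$). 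Hence $\sum_{\ell=2}^{2n-1}(-1)^{\ell-1}|O(2n-1;\ell)|=0$; this sum differs from $\sum_{k=1}^{2n-2}(-1)^{k-1}|O(2n-1;k)|$ only by $|O(2n-1;1)|-|O(2n-1;2n-1)|$, which vanishes by Theorem~\ref{thm.main1}, giving \eqref{eq.cor2}.

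For the second ratio in \eqref{eq.cor1} I would use the first two scalar equations of $M^{\pm}_{[2n-1]}\mathbf{O}_{2n-1}=\mathbf{D}^{\pm}_{2n-1}$, whose coefficients are $m^{\pm}_{1,\ell}=2(-1)^{\ell-1}d_{\ell-1,0}=2(-1)^{\ell-1}$ and $m^{\pm}_{2,\ell}=2(-1)^{\ell-1}d_{\ell-2,1}$, where $d_{-1,1}=0$ and $d_{p,1}=2p+1$ for $p\ge 0$, so $m^{\pm}_{2,\ell}=2(-1)^{\ell-1}(2\ell-3)$ for $\ell\ge 2$ and $m^{\pm}_{2,1}=0$. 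The first equation gives $|D^{\pm}(2n-1;1)|=2\sum_{\ell=1}^{2n-1}(-1)^{\ell-1}|O(2n-1;\ell)|$, which equals $2\,|O(2n-1;1)|$ by \eqref{eq.cor2} together with Theorem~\ref{thm.main1}. In the second, write $2\ell-3=2(\ell-1)-1$; the contribution of the $-1$ part is $-2\sum_{\ell=2}^{2n-1}(-1)^{\ell-1}|O(2n-1;\ell)|=0$ by \eqref{eq.cor2}, so $|D^{\pm}(2n-1;2)|=4\sum_{\ell=1}^{2n-1}(-1)^{\ell-1}(\ell-1)|O(2n-1;\ell)|$. Pairing the index $\ell$ with $2n-\ell$ and invoking $|O(2n-1;\ell)|=|O(2n-1;2n-\ell)|$ replaces the weight $\ell-1$ by the average $\tfrac{1}{2}\big((\ell-1)+(2n-1-\ell)\big)=n-1$, so $|D^{\pm}(2n-1;2)|=4(n-1)\sum_{\ell=1}^{2n-1}(-1)^{\ell-1}|O(2n-1;\ell)|=4(n-1)\,|O(2n-1;1)|=(2n-2)\,|D^{\pm}(2n-1;1)|$.

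Everything after setting up these scalar equations is routine bookkeeping with the Delannoy entries $d_{p,0}$, $d_{p,1}$ and the reflection $\ell\mapsto 2n-\ell$. The one step that is not purely formal is the vanishing $D^{-}(2n-1;1)=\emptyset$ used for \eqref{eq.cor2}, and I expect it to be the (minor) crux: it needs either a careful appeal to the cell/ASM correspondence at the boundary of the Aztec diamond or a direct verification, through Lemma~\ref{lemma.cell} and the description of $\mathcal{D}(2n-1)$, that the path into $v_1$ is forced to emanate from $u_1$ and may be swapped for the path into $v_2$ without changing the rest of the configuration.
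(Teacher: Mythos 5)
Your derivations are correct, and for two of the three identities you take a genuinely different (though equally valid) route from the paper. The first ratio is handled exactly as in the paper: the $(2n-2)$nd row of $R_{2n-1}\mathbf{O}_{2n-1}=\mathbf{O}^{\mathsf{rev}}_{2n-1}$, triangularity, translation invariance, $r^{2n-1}_{1,2}=4n-6$, and Theorem~\ref{thm.main1}. For \eqref{eq.cor2}, however, the paper reads off the \emph{first} row of $M^{\pm}_{[2n-1]}\mathbf{O}_{2n-1}=\mathbf{D}^{\pm}_{2n-1}$ and evaluates the right-hand side as $|D^{\pm}(2n-1;1)|=|D^{\pm}(2n-1;2n-1)|=2|O(2n-1;2n-1)|$ (horizontal-reflection symmetry of $D^{\pm}$ plus the last, triangular row of $M^{\pm}$), whereas you use the first row of $M^{-}$ together with the new input $D^{-}(2n-1;1)=\emptyset$. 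That input is true and is most cleanly seen from the tiling itself: the bottom diagonal cell contains the two bottom unit squares of $AD(2n-1)$, and whichever domino covers the bottom-left one is forced to lie entirely inside that cell, so the cell is never assigned $-1$ (equivalently, a corner entry of an ASM, if nonzero, is the first nonzero entry of its row and hence $+1$). Your parenthetical path-swap justification is the shakiest part --- it would need an argument that $|\mathscr{P}_0(U,\{v_1\}\cup V^{*})|=|\mathscr{P}_0(U,\{v_2\}\cup V^{*})|$, e.g.\ that the $u_1$-path is forced in both systems and the exchange preserves non-intersection --- so I would lean on the tiling/ASM argument instead. For the second ratio the paper again works at the bottom of the matrix (rows $2n-2$ and $2n-1$ of $M^{\pm}$, where it is nearly triangular) and feeds in the already-proved first ratio; you instead work at the top (rows $1$ and $2$), split $2\ell-3=2(\ell-1)-1$, kill one piece with \eqref{eq.cor2}, and evaluate the other by averaging the weight $\ell-1$ over the reflection $\ell\mapsto 2n-\ell$ using Theorem~\ref{thm.main1}; this is a nice self-contained alternative that trades the first ratio for the symmetry property. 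The logical order you choose (\eqref{eq.cor2} before the $D^{\pm}$ ratio) avoids any circularity.
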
 

The rest of this paper is organized as follows. In Section~\ref{sec.pre}, we review the primary tools that will be used to prove our main results, including the method of non-intersecting lattice paths and a modification of Stembridge's Pfaffian formula for enumerating these non-intersecting lattice paths. In Section~\ref{sec.pair}, we provide several properties of pairs of non-intersecting lattice paths and prove the symmetry property of $|O(2n-1;k)|$ (Theorem \ref{thm.main1}). In Section~\ref{sec.nearly}, we study the nearly off-diagonally symmetric domino tilings of $AD(2n-1)$ and prove Theorem \ref{thm.main2}, Theorem \ref{thm.main3} and Corollary \ref{cor.identity}. In Section~\ref{sec.open}, we propose some conjectures regarding the log-concavity and asymptotic behavior of the number of off-diagonally symmetric domino tilings studied in this paper.

\section{Preliminaries}\label{sec.pre}

We begin this section by reviewing the method of non-intersecting lattice paths (Section~\ref{sec.methodpath}) which turns the enumeration of tilings into the enumeration of families of non-intersecting lattice paths. Subsequently, in Section~\ref{sec.methodnon-int}, we present a Pfaffian formula and its modification for enumerating these non-intersecting lattice paths.

\subsection{The method of non-intersecting lattice paths}\label{sec.methodpath}

The method of non-intersecting lattice paths (or simply non-intersecting paths) (see~\cite[Section~3.1]{Propp15}) is a powerful technique used to enumerate domino or lozenge tilings. The key idea is to view a tiling as a family of non-intersecting paths\footnote{We say two paths are \textit{non-intersecting} if they do not pass through the same vertex. A family of paths is non-intersecting if any two of the paths are non-intersecting.} and then try to enumerate these families of non-intersecting paths by, for instance, the Lindstr{\"o}m-Gessel-Viennot theorem (\cite{Lin73} and \cite{GV85}) or Stembridge's Pfaffian generalization (\cite{Stem90}).

In general, there is a bijection (see \cite[Section 2]{LRS01}) between the set of domino tilings of a region $R$ on the square lattice and families of non-intersecting Delannoy paths with specific starting and ending points, which are determined by $R$. This bijection has been extensively applied in various studies concerning domino tilings of the Aztec diamond, see for example \cite{EF05},\cite{BvL13},\cite{Lai16}, and \cite{Lee23}. We describe this bijection below.

Given $AD(n)$ with the checkerboard coloring (mentioned in Section~\ref{sec.off}) on it. We mark the midpoint of the left edge of each black unit square, join these midpoints by edges, and form a subgraph $\mathcal{AD}(n)$ of the triangular lattice (see Figure~\ref{fig.AD(6)} for the graph $\mathcal{AD}(6)$ drawn in dotted edges). For convenience, we choose a coordinate system on the triangular lattice by fixing a lattice point as the origin and letting the positive $x$-axis (resp., $y$-axis) be a lattice line pointing southeast (resp., northeast). Consequently, the edges that are parallel to the $x$-axis (resp., $y$-axis) are oriented southeast (resp., northeast), while the edges parallel to the line $y=x$ are oriented east (such orientations are illustrated in Figure \ref{fig.AD(6)} with red edges).
\begin{figure}[htb!]
    \centering
    \subfigure[]
    {\label{fig.AD(6)}\includegraphics[height=0.5\textwidth]{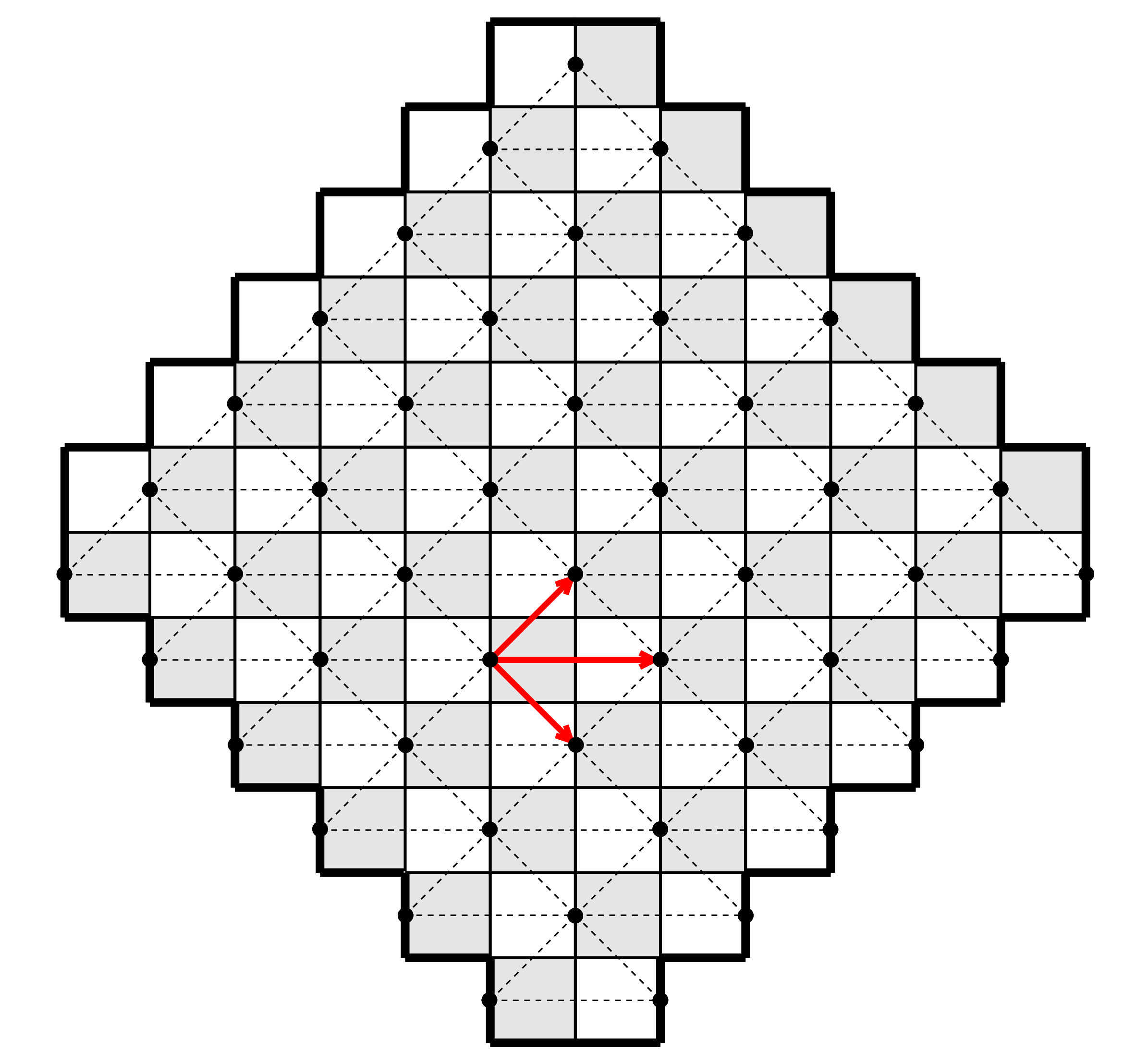}}
    \hspace{5mm}
    \subfigure[]
    {\label{fig.tri-half}\includegraphics[height=0.5\textwidth]{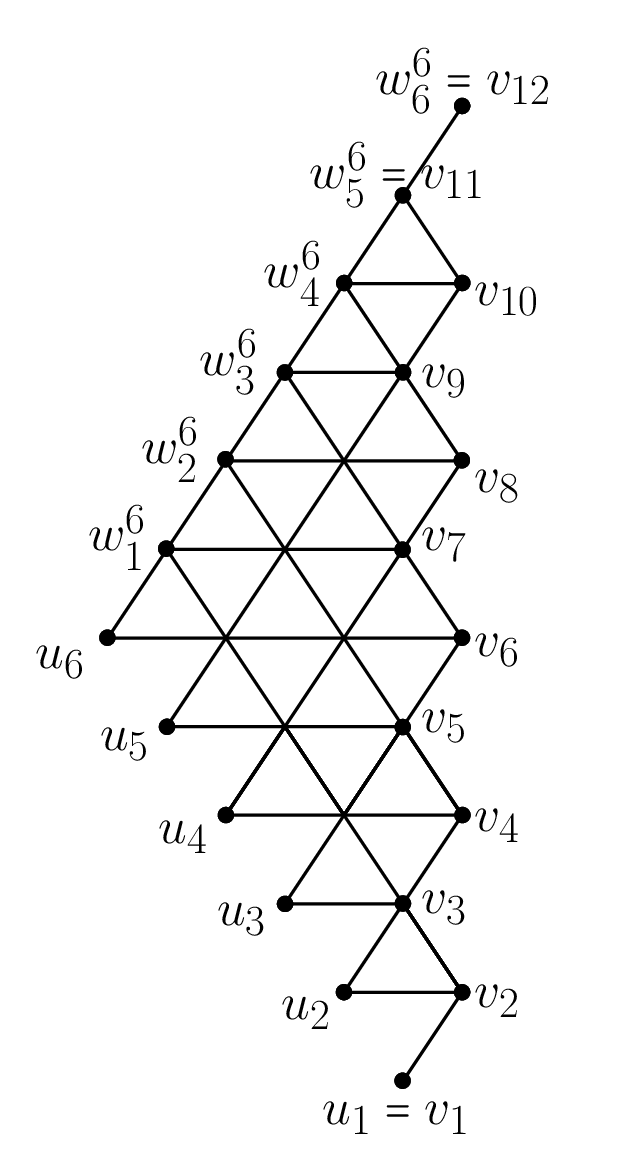}}
    \caption{(a) The region $AD(6)$ and the underlying graph $\mathcal{AD}(6)$ drawn in dotted edges. (b) The graph $\mathcal{D}(6)$, the left half part of $\mathcal{AD}(6)$.}
    \label{fig.tri-graph}
\end{figure}

The bijection works as follows. Given a domino tiling, map each horizontal domino with the left unit square colored black to a $(1,1)$ step on the triangular lattice; map each vertical domino with the top (resp., bottom) unit square colored black to a $(1,0)$ (resp., $(0,1)$) step on the triangular lattice. Note that no step of the lattice paths corresponds to horizontal dominoes in which the right unit square is black. Figure~\ref{fig.bijection} shows two examples of this bijection; we would like to point out that since the fourth unit square is removed in Figure~\ref{fig.O(5,4)path}, there is no path starting at $u_4$.
\begin{figure}[htb!]
    \centering
    \subfigure[]
    {\label{fig.O(5,4)path}\includegraphics[height=0.4\textwidth]{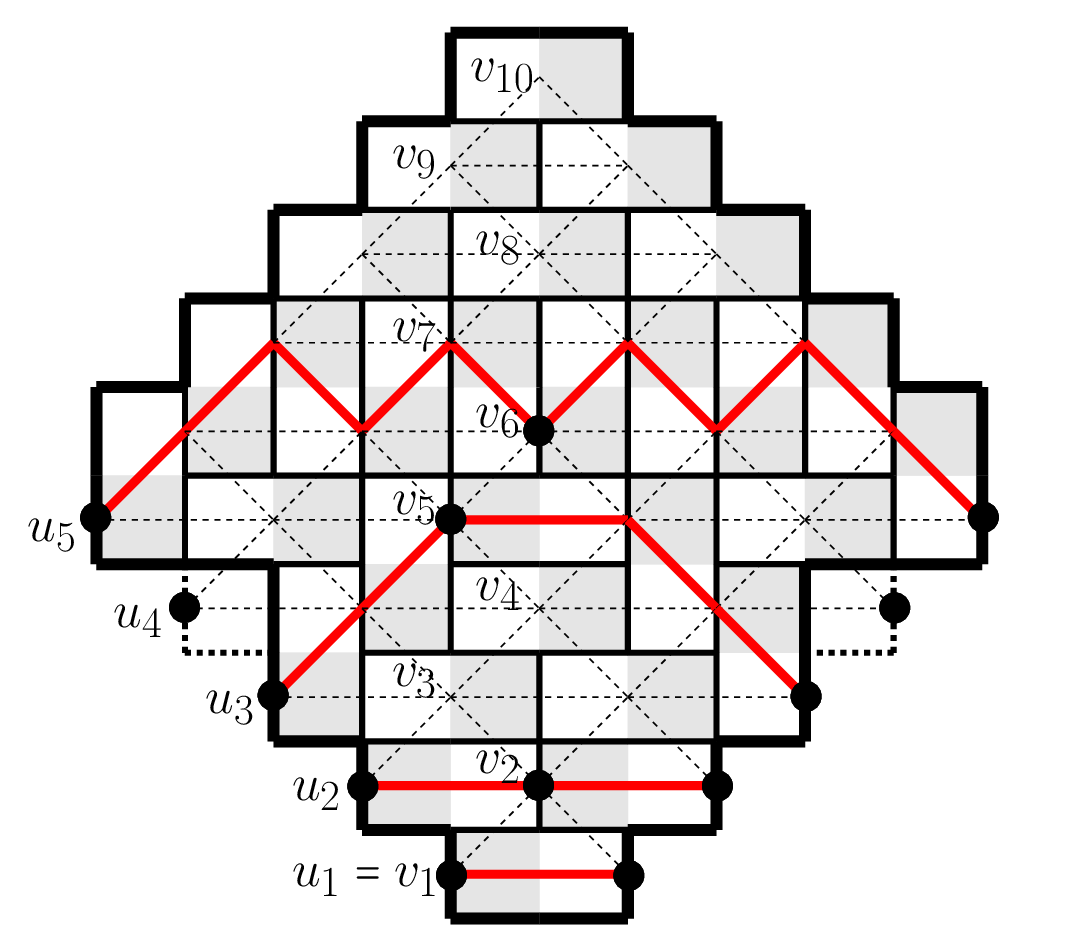}}
    \hspace{5mm}
    \subfigure[]
    {\label{fig.D+(5,4)path}\includegraphics[height=0.4\textwidth]{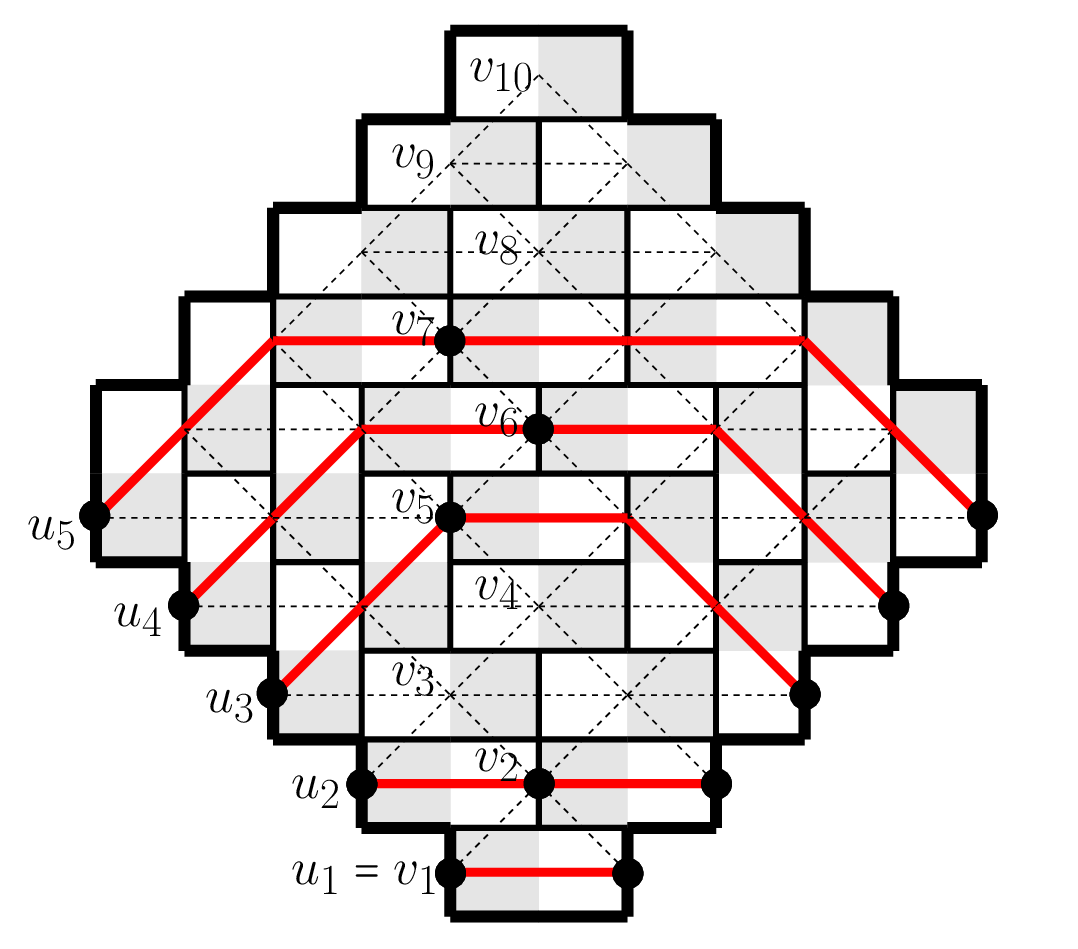}}
    \caption{(a) The non-intersecting path corresponding to the domino tiling given in Figure~\ref{fig.O(5,4)}. (b) The non-intersecting path corresponding to the domino tiling given in Figure~\ref{fig.D+(5,4)}.}
    \label{fig.bijection}
\end{figure}

We now return to off-diagonally symmetric domino tilings of the Aztec diamond. Due to symmetry, we can focus on the left half part of the Aztec diamond. Let $\mathcal{D}(n)$ be the subgraph of $\mathcal{AD}(n)$ containing the left half part of it; see Figure~\ref{fig.tri-half} for an example when $n=6$. On the graph $\mathcal{D}(n)$, let $U = \{u_1,u_2,\dotsc,u_n\}$ be the collection of points on the southwestern boundary (from bottom to top) and $V = \{v_1,v_2,\dotsc,v_{2n}\}$ the collection of $2n$ points along the right boundary (from bottom to top). We also let $W=\{w_1^{n},\dotsc,w_n^{n}\}$ be the collection of points on the northwestern boundary (from bottom to top).

In the previous work \cite[Lemma 7]{Lee23}, the author showed that the $k$th cell (from bottom to top) on the vertical diagonal is assigned $0$ if and only if either $v_{2k-1},v_{2k} \in V_0$ or $v_{2k-1},v_{2k} \notin V_0$, where $ V_0 \subset V$ is the set of ending points of the corresponding family of Delannoy paths. We call a pair of points $v_k^{*}=\{v_{2k-1},v_{2k}\}$ a \textit{doublet}, and let $V^{*}=\{v_1^{*},\dotsc v_n^{*}\}$ be the collection of these doublets. As a consequence, given an index set $I = \{i_1,\dotsc,i_r\}$, where $1 \leq i_1 < \cdots < i_r \leq n$, off-diagonally symmetric domino tilings of $O(n;I)$ are in one-to-one correspondence with $r$-tuples of non-intersecting Delannoy paths in $\mathcal{D}(n)$ connecting $U=\{u_{i_1},\dotsc,u_{i_r}\}$ and $V^{*}$ (two points of a doublet are both the ending points of paths or neither the ending points of paths). We refer the interested reader to \cite[Section 2.3]{Lee23} for more details.

For nearly off-diagonally symmetric domino tilings of $AD(2n-1)$, the cell on the vertical diagonal that is assigned $-1$ or $1$ can also be characterized by ending points of its corresponding non-intersecting paths. This follows immediately from the contrapositive statement of the characterization of a cell assigned $0$ mentioned in the previous paragraph (\cite[Lemma 7]{Lee23}). In other words, the $k$th cell on the vertical diagonal is assigned $-1$ or $1$ if and only if either $v_{2k-1}$ or $v_{2k}$ is the ending point of the corresponding path.

When the $k$th cell on the vertical diagonal is assigned $1$ (it contains either two horizontal or two vertical dominoes), we can count this case by viewing it as twice the number of the case when that cell contains two horizontal dominoes. It is easy to see that the $k$th cell on the vertical diagonal contains two horizontal dominoes if and only if $v_{2k-1}$ is the ending point of the corresponding path while $v_{2k}$ is not (see the fourth cell in Figure \ref{fig.D+(5,4)path} as an example).

For the last case when the $k$th cell on the vertical diagonal is assigned $-1$, it is given by
\begin{equation*}
    D^{-}(2n-1;k) = D^{\pm}(2n-1;k) \setminus D^{+}(2n-1;k).
\end{equation*}
Therefore, we are able to enumerate domino tilings of the set $D^{*}(2n-1;k)$ ($*$ represents $+,-$ or $\pm$) by enumerating its corresponding families of non-intersecting paths. We summarize the above discussion in the following lemma.
\begin{lemma}\label{lemma.cell}
  On the graph $\mathcal{D}(2n-1)$, let $U=\{u_{1},\dotsc,u_{2n-1}\}$ be the set of fixed starting points of paths, $V^{*}=\{v_1^{*},\dotsc v_{2n-1}^{*}\}$ the set of doublets, and $\overline{V}=\{v_{\ell}\}$ the fixed ending point, for some $\ell$. Let $\mathscr{P}_0(U,\overline{V} \cup V^{*})$ be the set of $(2n-1)$-tuples of non-intersecting paths connecting $U$ with $\overline{V} \cup V^{*}$ in which $2n-2$ paths end at some doublets while the remaining one path must end at $v_{\ell}$. Then we have
  \begin{align*}
    |D^{\pm}(2n-1;k)| & = |\mathscr{P}_0(U,\{v_{2k}\} \cup V^{*})| + |\mathscr{P}_0(U,\{v_{2k-1}\} \cup V^{*})|, \\
    |D^{+}(2n-1;k)| & = 2 \cdot |\mathscr{P}_0(U,\{v_{2k-1}\} \cup V^{*})|, \\
    |D^{-}(2n-1;k)| & = |\mathscr{P}_0(U,\{v_{2k}\} \cup V^{*})| - |\mathscr{P}_0(U,\{v_{2k-1}\} \cup V^{*})|,
  \end{align*}
  for $k=1,\dotsc,2n-1$.
\end{lemma}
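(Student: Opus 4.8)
The plan is to assemble the three identities from the bijective facts recalled in Section~\ref{sec.methodpath}, treating the displayed equalities one at a time. Throughout I write $V_{0}$ for the set of ending points of the family of Delannoy paths attached to a tiling. First I would record the dictionary: a nearly off-diagonally symmetric tiling of $AD(2n-1)$ is symmetric about the vertical diagonal, hence determined by its restriction to the left half $\mathcal{D}(2n-1)$, and the path bijection of Section~\ref{sec.methodpath} turns that restriction into a $(2n-1)$-tuple of non-intersecting Delannoy paths with starting set $U$. By \cite[Lemma 7]{Lee23}, the $j$th diagonal cell is assigned $0$ exactly when the doublet $v_{j}^{*}=\{v_{2j-1},v_{2j}\}$ is balanced (both or neither of its points lie in $V_{0}$); contrapositively, it is assigned $\pm 1$ exactly when precisely one of $v_{2j-1},v_{2j}$ lies in $V_{0}$.

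Next, for the $D^{\pm}$ identity: a tiling lies in $D^{\pm}(2n-1;k)$ iff, under the bijection, every doublet $v_{j}^{*}$ with $j\neq k$ is balanced while $v_{k}^{*}$ contributes exactly one ending point — either $v_{2k}$ or $v_{2k-1}$. Splitting $D^{\pm}(2n-1;k)$ according to which point of $v_{k}^{*}$ is that ending point yields a bijection onto $\mathscr{P}_{0}(U,\{v_{2k}\}\cup V^{*})\sqcup\mathscr{P}_{0}(U,\{v_{2k-1}\}\cup V^{*})$, which is the first stated equality. For the $D^{+}$ identity, I would use the finer part of the analysis preceding the lemma: the $k$th cell contains two \emph{horizontal} complete dominoes iff $v_{2k-1}\in V_{0}$ and $v_{2k}\notin V_{0}$; in particular every family in $\mathscr{P}_{0}(U,\{v_{2k-1}\}\cup V^{*})$ corresponds to a tiling whose $k$th cell is two horizontal dominoes (so is assigned $1$), and conversely. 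The local move replacing those two horizontal dominoes by two vertical dominoes leaves the rest of the tiling untouched, preserves vertical-diagonal symmetry, and keeps every other diagonal cell assigned $0$; it is therefore a bijection between the ``two-horizontal'' and ``two-vertical'' parts of $D^{+}(2n-1;k)$, which exhaust $D^{+}(2n-1;k)$ since any cell assigned $1$ is symmetric and so holds two horizontal or two vertical dominoes. Hence the two parts have equal size and $|D^{+}(2n-1;k)| = 2\,|\mathscr{P}_{0}(U,\{v_{2k-1}\}\cup V^{*})|$.

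Finally, the $D^{-}$ identity is pure arithmetic: from $D^{-}(2n-1;k)=D^{\pm}(2n-1;k)\setminus D^{+}(2n-1;k)$ together with the disjoint decomposition $D^{\pm}(2n-1;k)=D^{+}(2n-1;k)\sqcup D^{-}(2n-1;k)$, subtract the $D^{+}$ equality from the $D^{\pm}$ equality to obtain $|D^{-}(2n-1;k)| = |\mathscr{P}_{0}(U,\{v_{2k}\}\cup V^{*})| - |\mathscr{P}_{0}(U,\{v_{2k-1}\}\cup V^{*})|$.

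The hard part is the factor $2$ for $D^{+}$, and within it two local checks: (i) that of the two symmetric ways to place two complete dominoes in the special cell, the ``two horizontal'' case is precisely the case $v_{2k-1}\in V_{0},\ v_{2k}\notin V_{0}$ — this refines \cite[Lemma 7]{Lee23} and amounts to tracking how a domino straddling the vertical diagonal is read off by the path bijection near $v_{2k-1}$ and $v_{2k}$; and (ii) that the retiling move respects the symmetry constraint and does not alter the status (assigned $0$) of the remaining diagonal cells. Both are routine inspections of the correspondence between dominoes and lattice steps, but they carry the bookkeeping; once they are in place, the three identities follow immediately from the bijection and \cite[Lemma 7]{Lee23}.
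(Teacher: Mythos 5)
Your proof is correct and takes essentially the same route as the paper, which derives the lemma from the contrapositive of \cite[Lemma 7]{Lee23}, counts the cell assigned $1$ as twice the two-horizontal-dominoes case, and obtains the $D^{-}$ identity as the set difference $D^{\pm}\setminus D^{+}$. Your explicit horizontal-to-vertical swap inside the $k$th cell is precisely the (implicit) justification of the paper's factor of $2$.
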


\subsection{Enumeration of non-intersecting lattice paths}\label{sec.methodnon-int}

%(version 2)
Let $\mathcal{G}$ be a locally finite, connected, directed acyclic graph, equipped with a weight function $\wt$ assigning elements from some commutative ring to each of its edges. Given three ordered sets $U = \{u_1,\dotsc,u_n\}$, $\overline{V} = \{\overline{v_1},\dotsc,\overline{v_m}\}, (m \leq n)$ and $V =  \{v_1,\dotsc,v_s\}, (s \geq n-m)$ of vertices of $\mathcal{G}$. We write $\mathscr{P}^{\pi}(U, \overline{V} \cup V)$ for $n$-tuples of paths $(p_1,\dotsc,p_n)$ with the \textit{connection type} $\pi \in \mathfrak{S}_n$ (the symmetric group of $[n]$), where
\begin{itemize}
  \item for $i=1,2,\dotsc,m$, the path $p_i$ connects $u_{\pi(i)} \in U$ with $\overline{v_i} \in \overline{V}$, and
  \item for $i=m+1,m+2,\dotsc,n$, the path $p_i$ connects $u_{\pi(i)}$ with some point $v_{j_i} \in V$, with $1 \leq j_{m+1} < j_{m+2} < \cdots < j_n \leq s$.
\end{itemize}
We write $\mathscr{P}_0^{\pi}(U,\overline{V} \cup V)$ for the subset of $\mathscr{P}^{\pi}(U,\overline{V} \cup V)$ consisting of $n$-tuples of non-intersecting paths. If we do not specify the connection type, then we write $\mathscr{P}(U,\overline{V} \cup V)$ (resp., $\mathscr{P}_0(U,\overline{V} \cup V)$) for the union of $n$-tuples of (resp., non-intersecting) paths in $\mathscr{P}^{\pi}(U,\overline{V} \cup V)$ (resp., $\mathscr{P}^{\pi}_0(U,\overline{V} \cup V)$) over all connection types $\pi \in \mathfrak{S}_n$. Two ordered sets of vertices $U = \{u_1,\dotsc,u_n\}$ and $\overline{V} \cup V = \{\overline{v_1},\dotsc,\overline{v_m},v_1,\dotsc,v_s\}$ of $\mathcal{G}$ mentioned above are said to be \textit{compatible} if the only way to form $n$-tuples of non-intersecting paths is when the connect type $\pi = \mathsf{id}$.

The weight of a path $p$ is defined to be $\wt(p) = \prod_{e}\wt(e)$, where the product is over all edges $e$ of the path $p$. For an $n$-tuple of paths $P=(p_1,\dotsc,p_n)$, the weight is given by $\wt(P) = \prod_{i=1}^{n} \wt(p_i)$. Given a set of ($n$-tuples of) paths $\mathscr{P}$, we write $GF(\mathscr{P}) = \sum_{P \in \mathscr{P}} \wt(P)$ for the weighted sum of all the elements in $\mathscr{P}$.

The total weight of $n$-tuples of non-intersecting paths in $\mathscr{P}_0(U,\overline{V} \cup V)$ has been studied by Lindstr{\"o}m \cite{Lin73}, and later by Gessel and Viennot \cite{GV85} when $V=\emptyset$ and $m=n$. Stembridge \cite[Theorem~3.2]{Stem90} generalized their results to the case when $V$ is non-empty and the sets $U$ and $\overline{V} \cup V$ are compatible. We state below a more general version ($U$ and $\overline{V} \cup V$ are not necessarily compatible) given by Ciucu and Krattenthaler in \cite[Theorem 5]{CK11}:
\begin{theorem}[Stembridge~\cite{Stem90}; Ciucu, Krattenthaler~\cite{CK11}]\label{thm.Stem2}
   On a locally finite, connected, directed acyclic graph $\mathcal{G}$ with the setting mentioned above and assume $n+m$ is even. Then we have
   \begin{equation}\label{eq.SK1}
     \sum_{\pi \in \mathfrak{S}_n} \sgn(\pi) GF \left( \mathscr{P}_{0}^{\pi}(U,\overline{V} \cup V) \right) = (-1)^{\binom{m}{2}}\pf
   \begin{bmatrix}
      Q & H \\
      -H^{\intercal} & 0_m
      \end{bmatrix},
   \end{equation}
   where $Q=[Q_{V}(u_i,u_j)]_{1 \leq i,j \leq n}$ is a skew-symmetric matrix of order $n$, $Q_{V}(u_i,u_j)$ is the total weight of pairs of non-intersecting paths $(p_i,p_j)$, where $p_i$ connects $u_i$ with some $v_{\ell_1}$ and $p_j$ connects $u_j$ with some $v_{\ell_2}$, with $\ell_1 < \ell_2$; and where $H=[GF(\mathscr{P}(u_i,\overline{v_j}))]_{1 \leq i \leq n, 1 \leq j \leq m}$ is the rectangular matrix, $\mathscr{P}(u_i,\overline{v_j})$ is the set of paths going from $u_i$ to $\overline{v_j}$. Here, $0_m$ is the zero matrix of order $m$.

   In particular, if $U$ and $\overline{V} \cup V$ are compatible, then we have
   \begin{equation}\label{eq.SK2}
     GF \left( \mathscr{P}_{0}^{\mathsf{id}}(U,\overline{V} \cup V) \right) = (-1)^{\binom{m}{2}}\pf
   \begin{bmatrix}
      Q & H \\
      -H^{\intercal} & 0_m
      \end{bmatrix}.
   \end{equation}
\end{theorem}
\begin{remark}\label{remark.Q}
  $Q_V(u_i,u_j)$ is obtained from summing over all possible pairs of non-intersecting paths whose ending points are in $V$, it can be written as the following double summation (\cite[Eq. 3.1]{Stem90}).
  \begin{equation}\label{eq.Q}
      Q_V(u_i,u_j) = \sum_{1 \leq \ell_1 < \ell_2 \leq s} \det
      \begin{bmatrix}
      GF\left( \mathscr{P}(u_i,v_{\ell_1}) \right) & GF\left( \mathscr{P}(u_i,v_{\ell_2}) \right) \\
      GF\left( \mathscr{P}(u_j,v_{\ell_1}) \right) & GF\left( \mathscr{P}(u_j,v_{\ell_2}) \right)
      \end{bmatrix}.
  \end{equation}
\end{remark}

From the discussion in Section \ref{sec.methodpath}, the families of non-intersecting paths that we would like to enumerate have one additional condition---the paths end at some doublets in $V^{*} = \{v_1^{*},\dotsc,v_{d}^{*}\}$. We write $\mathscr{P}_{0}^{\pi}(U,\overline{V} \cup V^{*})$ for the subset of $\mathscr{P}_{0}^{\pi}(U,\overline{V} \cup V)$ where the two elements in $v^{*}_{\ell}$ are both the ending points of paths or neither the ending points of paths, for all $\ell$. Note that $\mathscr{P}_{0}(U,\overline{V} \cup V^{*}) = \emptyset$ if $|U|-|\overline{V}|=n-m$ is odd.

Theorem \ref{thm.Stem2} does not apply directly in our situation because of this additional condition on the ending points of paths. This requires a modification of Theorem \ref{thm.Stem2}. In the previous paper \cite[Lemma~11]{Lee23}, the author provided such a modification to the case when there are no fixed ending points of paths ($\overline{V} = \emptyset$). We would like to point out that, when $\overline{V} = \emptyset, (m=0)$, the matrix in \eqref{eq.SK1} reduces to the matrix $[Q_V(u_i,u_j)]_{1 \leq i,j \leq n}$ of order $n$. The modification is to replace this matrix by $[Q_{V^{*}}(u_i,u_j)]_{1 \leq i,j \leq n}$, where $Q_{V^{*}}(u_i,u_j)$ is the total weight of pairs of non-intersecting paths $(p_i,p_j)$, where $p_i$ starts from $u_i$, $p_j$ starts from $u_j$, and the ending points of $p_i$ and $p_j$ are both in a doublet $v^{*}_{\ell}$, for some $\ell=1,\dotsc,d$. Using a similar argument to the one that led to \eqref{eq.Q}, we have
  \begin{equation}\label{eq.Q*}
      Q_{V^{*}}(u_i,u_j) = \sum_{1 \leq \ell \leq d} \det
      \begin{bmatrix}
      GF\left( \mathscr{P}(u_i,v_{2\ell -1}) \right) & GF\left( \mathscr{P}(u_i,v_{2\ell}) \right) \\
      GF\left( \mathscr{P}(u_j,v_{2\ell -1}) \right) & GF\left( \mathscr{P}(u_j,v_{2\ell}) \right)
      \end{bmatrix}.
  \end{equation}

The same modification can be extended directly to Theorem \ref{thm.Stem2}, the statement is presented in the following lemma. We will omit the proof here since it follows from the same arguments that prove \cite[Lemma~11]{Lee23}.
\begin{lemma}\label{lemma.mod2}
   On a locally finite, connected, directed acyclic graph $\mathcal{G}$ with the same setting as Theorem \ref{thm.Stem2} except that $V$ is replaced by the set of doublets $V^{*}= \{v_1^{*},\dotsc,v_{d}^{*}\}$, and assume $n+m$ is even. Then we have
   \begin{equation}\label{eq.mod21}
     \sum_{\pi \in \mathfrak{S}_n} \sgn(\pi) GF \left( \mathscr{P}_{0}^{\pi}(U,\overline{V} \cup V^{*}) \right) = (-1)^{\binom{m}{2}}\pf
   \begin{bmatrix}
      Q^{*} & H \\
      -H^{\intercal} & 0_m
      \end{bmatrix},
   \end{equation}
   where $Q^{*}=[Q_{V^{*}}(u_i,u_j)]_{1 \leq i,j \leq n}$ is a skew-symmetric matrix of order $n$, $Q_{V^{*}}(u_i,u_j)$ is given in \eqref{eq.Q*}, and where $H=[GF(\mathscr{P}(u_i,\overline{v_j}))]_{1 \leq i \leq n, 1 \leq j \leq m}$ is the rectangular matrix. Here, $0_m$ is the zero matrix of order $m$.

   In particular, if $U$ and $\overline{V} \cup V^{*}$ are compatible, then we have
   \begin{equation}\label{eq.mod22}
     GF \left( \mathscr{P}_{0}^{\mathsf{id}}(U,\overline{V} \cup V^{*}) \right) = (-1)^{\binom{m}{2}}\pf
   \begin{bmatrix}
      Q^{*} & H \\
      -H^{\intercal} & 0_m
      \end{bmatrix}.
   \end{equation}
\end{lemma}

\section{Pairs of non-intersecting lattice paths}\label{sec.pair}

For the rest of the paper, we continue with the same labeling of points and the same orientation of edges on the graph $\mathcal{D}(n)$ mentioned in Section~\ref{sec.methodpath}. We will use the same notations of paths given in Section~\ref{sec.methodnon-int} and work on the graph with all the edge weights equal to $1$.

We provide an intermediate result (Proposition~\ref{prop.Oreverse}) in Section~\ref{sec.onedefect}. The proof of Proposition~\ref{prop.Oreverse} will be elaborated in subsequent sections, where we show various properties of pairs of non-intersecting lattice paths. These properties are discussed in Sections~\ref{sec.transinv} (translation invariant), \ref{sec.recurrence} (recurrence relation) and \ref{sec.involutory} (involutory matrix). Furthermore, the proof of our first main result (Theorem \ref{thm.main1}) will be given in Section~\ref{sec.symmetry}.

\subsection{Enumeration of off-diagonally symmetric domino tilings of the Aztec diamond with one boundary defect}\label{sec.onedefect}

We remind the reader that $O(n;k)$ is the set of off-diagonally symmetric domino tilings of $AD(n)$ with the $k$th unit square removed from the southwestern boundary (along with the one from the southeastern boundary). Previously, we introduced the column vector $\mathbf{O}_{n}$, which includes the cardinality of the sets $O(n;k)$ for $k=1,\dotsc,n$. Let $\mathsf{rev}$ be an operator that reverses the order of elements in a vector, then
\begin{equation*}
  \mathbf{O}^{\mathsf{rev}}_{n} = \left( |O(n;n)|,|O(n;n-1)|,\dotsc,|O(n;1)| \right)^{\intercal}.
\end{equation*}
From the discussion in Section~\ref{sec.off}, if $n$ is even, then $O(n;k) = \emptyset$, and thus $\mathbf{O}_{n}$ is the zero vector.

We provide the following intermediate result which plays an important role in proving our first main result (Theorem~\ref{thm.main1}).
\begin{proposition}\label{prop.Oreverse}
  There exists an $n \times n$ matrix $R_{n}=[(-1)^{n+j}r^{n}_{i,j}]$ such that
  \begin{equation}\label{eq.RO}
    R_{n} \mathbf{O}_{n} = \mathbf{O}^{\mathsf{rev}}_{n},
  \end{equation}
  where $r^{n}_{i,j} = Q_{V^{*}}(u_j,w^{n}_{i})$. In particular, the matrix $R_{n}$ has the following properties:
  \begin{enumerate}
    \item $R_{n}$ is upper triangular with $r^{n}_{i,i} = 1$, for $i=1,\dotsc,n$,
    \item (translation invariant) $r^{n}_{i,j} = r^{n}_{i+1,j+1}$, for $1 \leq i, j \leq n-1$,
    \item (three-term recurrence relation) $r^{n}_{1,j} = r^{n}_{1,j-1} + r^{n-1}_{1,j} + r^{n-1}_{1,j-1}$, for $n \geq 3$ and $2 \leq j \leq n-1$, and
    \item (involution) $(R_n)^{-1} = R_n$.
  \end{enumerate}
\end{proposition}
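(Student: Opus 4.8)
The plan is to prove \eqref{eq.RO} by re-encoding off-diagonally symmetric tilings as families of non-intersecting Delannoy paths on $\mathcal{D}(n)$, reflecting this picture across the horizontal axis of $AD(n)$ so that one distinguished path is forced to start on the northwestern boundary, and then detaching that path by a Lindström-type exchange argument; the four structural properties of $R_n$ are then read off from the geometry of $\mathcal{D}(n)$ and from explicit identities for Delannoy numbers. First I would fix lattice coordinates for the points $u_j$, $w^n_i$, $v_m$ on $\mathcal{D}(n)$ as in \cite{Lee23}, so that every $GF(\mathscr{P}(u_j,v_m))$ and $GF(\mathscr{P}(w^n_i,v_m))$ is a Delannoy number $d_{p,q}$ with $p,q$ affine in the indices; such path counts depend only on the displacement between the two endpoints. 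With this in hand, Lemma~\ref{lemma.cell} (together with \cite[Lemma~7]{Lee23}) identifies $|O(n;j)|$ with the number of $(n-1)$-tuples of non-intersecting paths from $U\setminus\{u_j\}$ to the doublets $V^{*}$, and \eqref{eq.Q*} turns each $r^n_{i,j}=Q_{V^{*}}(u_j,w^n_i)$ into an explicit alternating sum of $2\times 2$ Delannoy determinants. This reduces \eqref{eq.RO} and properties (1)--(4) to statements about these path families and about Delannoy numbers.

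For \eqref{eq.RO}: reflecting a tiling in $O(n;n+1-i)$ across the horizontal axis of $AD(n)$ and reading off the induced path family on $\mathcal{D}(n)$, I expect a model in which one distinguished path must emanate from $w^n_i$ while the remaining paths start on the southwestern boundary, all ending at doublets of $V^{*}$ (the exact reflected model being something I would pin down from the path bijection and the placement of the defect). I would then run a sign-reversing exchange at the first intersection between the $w^n_i$-path and an ambient path; the configurations that survive split canonically as a non-intersecting pair from $u_j$ and $w^n_i$ into a common doublet --- contributing the factor $Q_{V^{*}}(u_j,w^n_i)$ --- together with a non-intersecting $(n-1)$-family from $U\setminus\{u_j\}$ into the remaining doublets --- contributing $|O(n;j)|$ --- and the bookkeeping behind Lemma~\ref{lemma.mod2} supplies exactly the sign $(-1)^{n+j}$ attached to the released southwestern start $u_j$. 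Summing over $j$ produces the left-hand side of \eqref{eq.RO}. I expect this reflection-and-exchange step to be the main obstacle: making the reflected path model precise and checking that the fixed points of the involution are exactly the configurations on the right, with matching signs. (Since $|U\setminus\{u_j\}|=n-1$, the set $O(n;j)$ is empty when $n$ is even, so \eqref{eq.RO} is vacuous there and the content lies in the odd case.)

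The structural properties then follow. Property (1): when $i>j$ the point $w^n_i$ sits too high relative to $u_j$ for any non-intersecting pair to reach a common doublet inside $\mathcal{D}(n)$, so the sum in \eqref{eq.Q*} is empty and $r^n_{i,j}=0$; when $i=j$ this pair is rigid --- a unique reachable doublet joined to $u_j$ and $w^n_j$ by forced paths --- giving $r^n_{j,j}=1$. Property (2) is a translation argument: incrementing both $i$ and $j$ shifts $u_j$ and $w^n_i$ by compatible lattice vectors (to be checked from the explicit coordinates), and shifting the target doublet $v^{*}_{\ell}$ to $v^{*}_{\ell+1}$ accordingly gives a weight-preserving bijection between the pairs counted by $r^n_{i,j}$ and those counted by $r^n_{i+1,j+1}$; equivalently, each $2\times 2$ Delannoy determinant in \eqref{eq.Q*} is unchanged because the four displacement vectors are unchanged. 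Property (3) is the defining recurrence $d_{p,q}=d_{p-1,q}+d_{p,q-1}+d_{p-1,q-1}$ applied to each summand, once one tracks how passing from $n$ to $n-1$ and from $j$ to $j-1$ shifts the Delannoy arguments; in fact this should identify $r^n_{1,j}$ with a single shifted Delannoy number, which makes the three-term relation automatic.

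Property (4), the involution, I would obtain last, after (1)--(3) have pinned $R_n$ down completely: it is upper triangular with unit diagonal, its entries depend only on $j-i$ for fixed $n$ by (2), and its first row $(r^n_{1,m})_m$ is determined by the recurrence of (3) with the stated initial data. Encoding that first row as a polynomial, the recurrence forces the convolution identity equivalent to $R_n^2=I$; alternatively, and more conceptually, the proof of \eqref{eq.RO} shows that $R_n$ realizes the horizontal reflection on the pertinent path configurations, and reflecting twice is the identity. Either route makes property (4) a short computation, so that the genuine difficulty of the proposition is concentrated in the reflection-and-exchange argument for \eqref{eq.RO}.
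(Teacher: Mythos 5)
Your overall frame for \eqref{eq.RO} --- reflect across the horizontal diagonal so the defect becomes a new starting point $w^n_{n+1-k}$, then extract a linear relation whose coefficients are $Q_{V^{*}}(u_j,w^n_i)$ --- is the paper's, and your argument for property (1) is essentially the paper's. But the mechanism you propose for \eqref{eq.RO} has a real gap: the surviving configurations of a ``first--intersection'' exchange do \emph{not} split canonically as (a non-intersecting pair from $u_j,w^n_i$ into a doublet) $\times$ (a non-intersecting $(n-1)$-family counted by $|O(n;j)|$), because the pair and the family would still have to avoid each other; the factorization $|O(n;n+1-k)|=\sum_\ell(-1)^{n+\ell}Q_{V^{*}}(u_\ell,w^n_k)|O(n;\ell)|$ holds only as a signed algebraic identity. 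The paper gets it by applying Lemma~\ref{lemma.mod2} to $U\cup\{w^n_{n+1-k}\}$ to produce an order-$(n+1)$ Pfaffian, expanding that Pfaffian along its last column via \eqref{eq.cofactorpf}, and recognizing the complementary minors as $\pf(A_{[n]\setminus\{\ell\}})=|O(n;\ell)|$ through Theorem~\ref{thm.OffPfaffian}. Your sketch needs to be replaced by (or shown equivalent to) that algebraic step.

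The dismissals of (2)--(4) as routine are where the proposal really breaks down. For (2), ``the four displacement vectors are unchanged'' is false: $u_j\mapsto u_{j+1}$ and $w^n_i\mapsto w^n_{i+1}$ move along \emph{different} boundary directions of $\mathcal{D}(n)$, so there is no common translation, the individual $2\times2$ determinants in \eqref{eq.Q*} do change, and the correct statement on the cut graph (Lemma~\ref{lemma.trans2}) carries a correction term $(-1)^i$ when $j=1$ which only cancels after splitting the paths from $u_j$ by their first step, as in \eqref{eq.Q*Dpartition}. For (3), $r^n_{1,j}=Q_{V^{*}}(u_j,w^n_1)$ is \emph{not} a single shifted Delannoy number (its diagonal $r^n_{1,n}$ is an alternating partial sum of Schr\"oder numbers, by Lemma~\ref{lemma.w1} and \cite[Corollary~5]{Lee23}), so the three-term relation is a genuine quadratic identity requiring Lemmas~\ref{lemma.trans2} and \ref{lemma.rec}, not an instance of $d_{p,q}=d_{p-1,q}+d_{p,q-1}+d_{p-1,q-1}$. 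For (4), the recurrence of (3) holds only for $2\le j\le n-1$ and therefore does \emph{not} determine the first row from the first column alone: one must also identify the boundary data $r^n_{1,n}$, which is exactly what the paper's Lemma~\ref{lemma.coincide} does (via the residue extraction of the diagonal of $T$ and the Schr\"oder generating function) before concluding $T_n(-y)T_n(y)=1$; and the ``reflect twice'' shortcut would at best yield $R_n^2\mathbf{O}_n=\mathbf{O}_n$, not the matrix identity $R_n^2=I$. So while the skeleton is right, each of (2), (3), (4) still requires the substantive lemmas of Sections~\ref{sec.transinv}--\ref{sec.involutory}, and the arguments offered in their place would fail.
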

\begin{proof}
  We will prove the existence of the matrix $R_{n}$ and the first property here, the proof of the other properties of $R_{n}$ will be presented in Sections \ref{sec.transinv}, \ref{sec.recurrence} and \ref{sec.involutory}.

  Domino tilings of $O(n;k)$ can be enumerated in two ways. The first way follows from Theorem~\ref{thm.OffPfaffian} by taking the index set $I = [n] \setminus \{k\}$, then we obtain
  \begin{equation}\label{eq.R1}
    |O(n,k)| = \pf(A_{I}).
  \end{equation}
  The second way is to reflect the entire Aztec diamond region and the tiling across the horizontal diagonal, then the boundary defect appears on the northwestern (and northeastern) boundary. Figure~\ref{fig.O(5,4)reverse} shows an example and its corresponding non-intersecting paths. According to the bijection mentioned in Section~\ref{sec.methodpath}, removing the $k$th unit square from the northwestern boundary of $AD(n)$ creates the new starting point $w_{n+1-k}^n$ on the graph $\mathcal{D}(n)$. As a consequence, $O(n;k)$ is in one-to-one correspondence with $\mathscr{P}_0^{\mathsf{id}}(U \cup \{w_{n+1-k}^n\}, V^{*})$ on the graph $\mathcal{D}(n)$ (it is obvious that $U \cup \{w_{n+1-k}^n\}$ and $V^{*}$ are compatible, the only connection type is $\mathsf{id}$).
\begin{figure}[htb!]
    \centering
    \subfigure[]
    {\label{fig.O(5,4)rev}\includegraphics[height=0.4\textwidth]{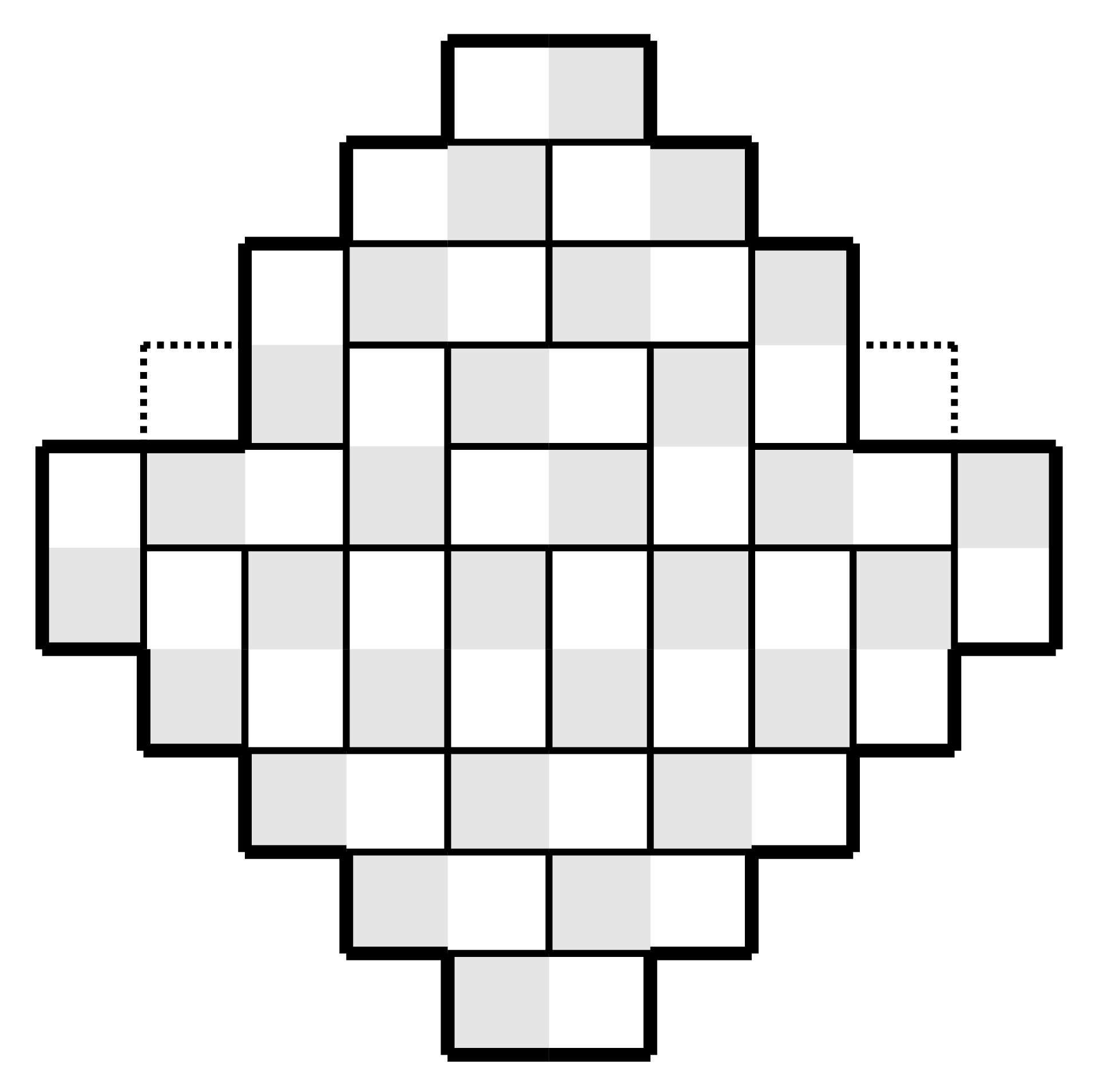}}
    \hspace{5mm}
    \subfigure[]
    {\label{fig.O(5,4)revpath}\includegraphics[height=0.4\textwidth]{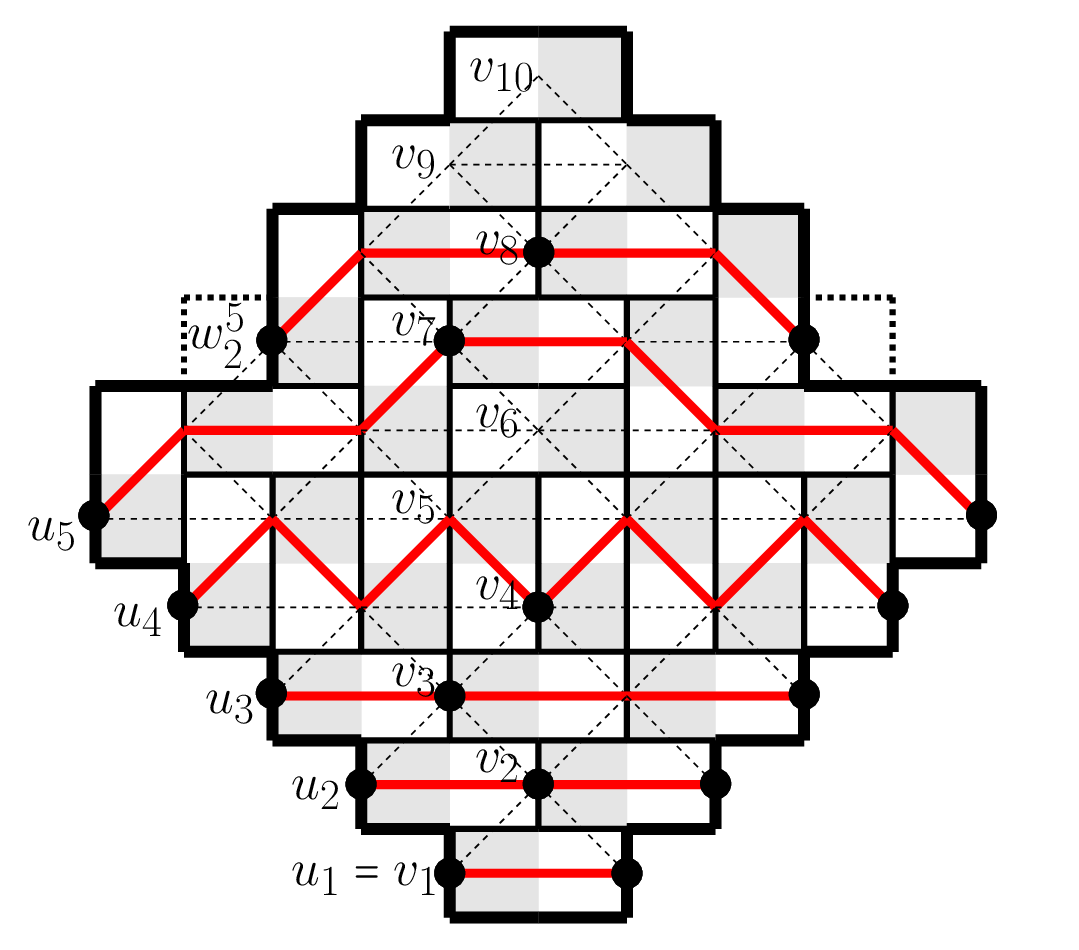}}
    \caption{(a) The reflection of the domino tiling given in Figure \ref{fig.O(5,4)} with respect to the horizontal diagonal. (b) The corresponding non-intersecting paths from the domino tiling in Figure \ref{fig.O(5,4)rev}.}
    \label{fig.O(5,4)reverse}
\end{figure}

  We then apply the special case of Lemma~\ref{lemma.mod2} by taking the graph $\mathcal{G} = \mathcal{D}(n)$, the sets $\overline{V} = \emptyset$, $V^{*}=\{v_1^{*},\dotsc,v_n^{*}\}$ and $U=\{u_1,\dotsc,u_n,w_{n+1-k}^n\}$ to obtain $|O(n;k)|$. However, to prove this proposition, it is more convenient to obtain $|O(n;n+1-k)|$ (by substituting $k$ with $n+1-k$). As a result, we have the following Pfaffian expression. For $k=1,\dotsc,n$,
  \begin{equation}\label{eq.R2}
    |O(n;n+1-k)| = GF(\mathscr{P}_0^{\mathsf{id}}(U, V^{*})) = \pf
       \begin{bmatrix}
          Q^{*} & \mathbf{x}_k^{n} \\
          -(\mathbf{x}_k^{n})^{\intercal} & 0
       \end{bmatrix},
  \end{equation}
  where $Q^{*}=[Q_{V^{*}}(u_i,u_j)]_{1 \leq i,j \leq n}$ is the matrix $A_{[n]}$ given in \eqref{eq.rec}, and $\mathbf{x}_k^{n}$ is a column vector which corresponds to the term involving the new starting point, the $i$th component of $\mathbf{x}_k^{n}$ is given by $Q_{V^{*}}(u_i,w_{k}^n)$, $i=1,\dotsc,n$.

  We remind the reader below a cofactor-like expansion for $\pf(A)$ along the last column of $A=[a_{i,j}]_{1 \leq i,j \leq n}$ (see for instance \cite[Proposition~2.3]{IW06}):
  \begin{equation}\label{eq.cofactorpf}
    \pf(A) = \sum_{\ell=1}^{n-1} (-1)^{n+\ell-1} a_{\ell,n} \pf(A_{\hat{\ell},\hat{n}}),
  \end{equation}
  where $A_{\hat{\ell},\hat{n}}$ is the matrix obtained from $A$ by deleting rows and columns indexed by $\ell$ and $n$. It is worth noting that if we delete the rows and columns indexed by $k$ and $n+1$ from the matrix on the right-hand side of \eqref{eq.R2}, then we obtain the matrix $A_{[n] \setminus \{k\}}$. The Pfaffian of this matrix gives, in fact, $|O(n,k)|$ by \eqref{eq.R1}. Now, we expand the Pfaffian in \eqref{eq.R2} along the last column, which results in
  \begin{equation}\label{eq.R3}
    |O(n;n+1-k)| = \pf
       \begin{bmatrix}
          Q^{*} & \mathbf{x}_k^{n} \\
          -(\mathbf{x}_k^{n})^{\intercal} & 0
       \end{bmatrix}
       = \sum_{\ell=1}^{n} (-1)^{(n+1)+\ell-1} Q_{V^{*}}(u_{\ell},w_{k}^n) |O(n;\ell)|,
  \end{equation}
  for $k=1,\dotsc,n$.

  It is clear that the $n$ equations in \eqref{eq.R3} are equivalent to the matrix equation of the form $R_n \mathbf{O}_n = \mathbf{O}_n^{\mathsf{rev}}$, where the $(k,\ell)$-entry of $R_n$ (the $k$th row of $R_n$ corresponds to the $k$th equation in \eqref{eq.R3}) is given by $(-1)^{n+\ell} Q_{V^{*}}(u_{\ell},w_{k}^n)$. This verifies \eqref{eq.RO}.

  On the graph $\mathcal{D}(n)$ (see Figure~\ref{fig.tri-half} again), it is easy to see that the path starting from $u_j,(2 \leq j \leq n)$ can end at points from $v_2$ to $v_{2j}$, the path starting from $u_1$ can end at $v_1$ or $v_2$. On the other hand, the path starting from $w_i^n,(1 \leq i \leq n)$ can reach points from $v_{2i}$ to $v_{2n}$.

  In the case when $i=j$, $r_{i,i}^n = Q_{V^{*}}(u_i,w_i^n) = 1$ since there is only one pair of non-intersecting paths: one path connects $u_i$ with $v_{2i-1}$ and the other path connects $w_i^n$ with $v_{2i}$. When $i>j$, $r_{i,j}^n = Q_{V^{*}}(u_j,w_i^n) = 0$ since the ending points that the two paths can reach are not in a doublet, such a pair of non-intersecting paths does not exist. This implies that the matrix $R_{n}$ is upper triangular. This completes the proof of the first part of Proposition~\ref{prop.Oreverse}.
\end{proof}

\subsection{Translation invariant lemmas}\label{sec.transinv}

Consider the graph $\mathcal{\overline{D}}(n)$ obtained from $\mathcal{D}(n)$ by deleting all the $u_i$'s and all their incident edges, the vertices on the southwestern boundary are labeled by $x_1$ to $x_n$ from bottom to top; see Figure~\ref{fig.tri-halfcut} for $\mathcal{\overline{D}}(6)$. We note that on the graph $\mathcal{D}(n)$, the set of doublets is $V^{*}=\{v_{\ell}^{*} = \{v_{2\ell-1},v_{2\ell}\} : 1 \leq \ell \leq n\}$. However, on the graph $\mathcal{\overline{D}}(n)$, the set of doublets becomes $V^{*}=\{v_{\ell}^{*} = \{v_{2\ell-1},v_{2\ell}\} : 2 \leq \ell \leq n\}$ since $v_1$ has been deleted.

\begin{figure}[htb!]
    \centering
    \subfigure[]
    {\label{fig.tri-halfcut}\includegraphics[height=0.4\textwidth]{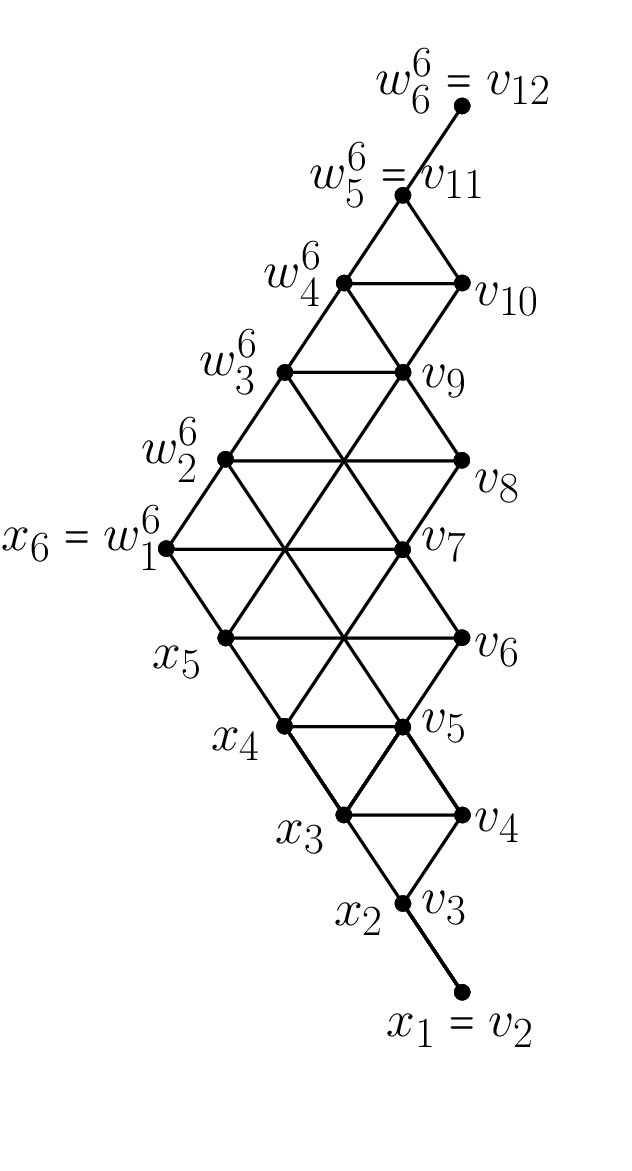}}
    \hspace{10mm}
    \subfigure[]
    {\label{fig.transinv}\includegraphics[height=0.4\textwidth]{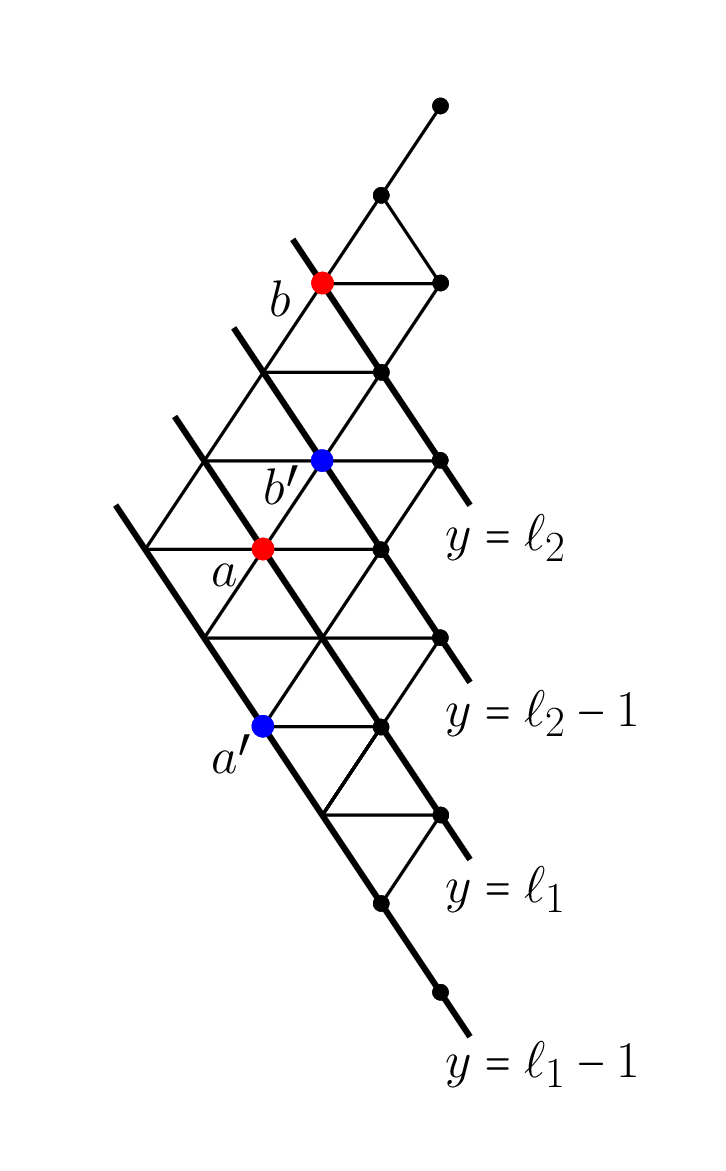}}
    \hspace{10mm}
    \subfigure[]
    {\label{fig.tri-partition}\includegraphics[height=0.4\textwidth]{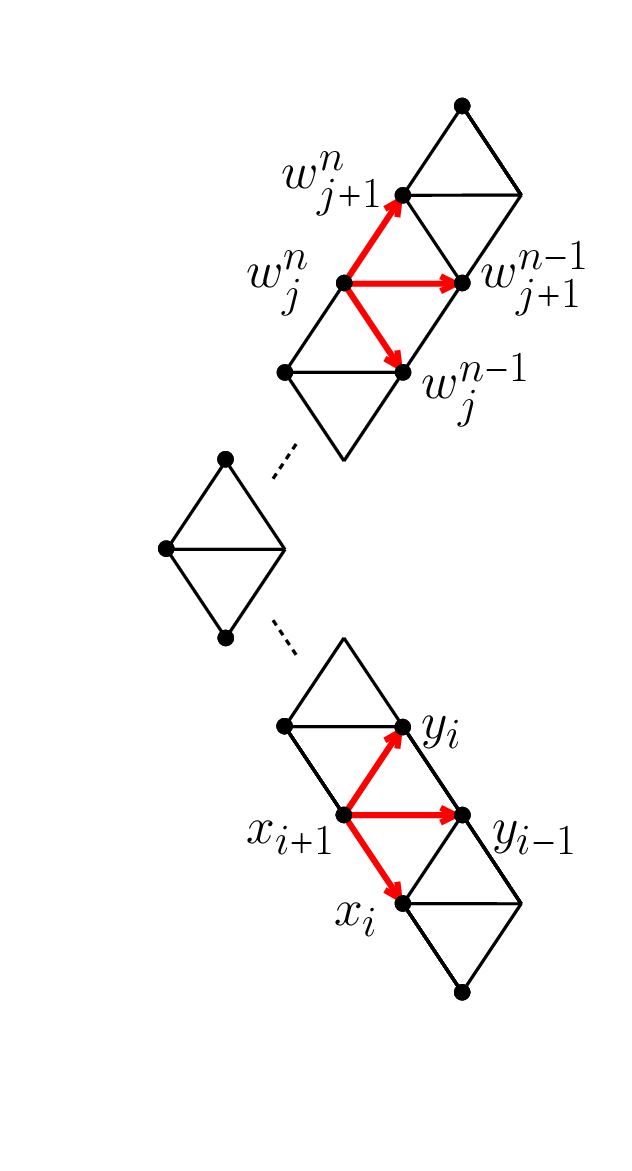}}
    \caption{(a) The graph $\mathcal{\overline{D}}(6)$. (b) The scenario described in Lemma~\ref{lemma.trans1}. (c) An illustration of partitioning paths in the proof Lemma~\ref{lemma.trans2}.}
    \label{fig.tricut-graph}
\end{figure}

The first translation invariant lemma stated below is similar to \cite[Lemma 13]{Lee23}.
\begin{lemma}\label{lemma.trans1}
  On the graph $\mathcal{\overline{D}}(n)$, let $a=(p,\ell_1)$ and $b=(q,\ell_2)$ be two distinct points on the lattice line $y=\ell_1$ and $y=\ell_2$, respectively. Let $a^{\prime}$ and $b^{\prime}$ be the points obtained from $a$ and $b$ by shifting ``downward'' one lattice line, that is, $a^{\prime} = (p+1,\ell_1-1)$ and $b^{\prime} = (q+1,\ell_2-1)$, provided that $a^{\prime}$ and $b^{\prime}$ are still contained in $\mathcal{\overline{D}}(n)$ (see Figure~\ref{fig.transinv}). Then
  \begin{equation}\label{eq.trans1}
    Q_{V^{*}}(a,b) = Q_{V^{*}}(a^{\prime},b^{\prime}).
  \end{equation}
\end{lemma}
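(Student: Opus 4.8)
The plan is to prove the translation invariance $Q_{V^*}(a,b)=Q_{V^*}(a',b')$ by exhibiting a weight-preserving bijection between the relevant sets of pairs of non-intersecting paths. Recall that $Q_{V^*}(a,b)$ counts pairs $(p,p')$ of non-intersecting Delannoy paths in $\overline{\mathcal D}(n)$ where $p$ starts at $a$, $p'$ starts at $b$, and the two endpoints together form a doublet $v^*_\ell=\{v_{2\ell-1},v_{2\ell}\}$ for some $\ell$. The key structural observation is that the point $a=(p,\ell_1)$ lies one lattice line above $a'=(p+1,\ell_1-1)$, and similarly for $b,b'$; the ``downward shift'' by one lattice line is exactly the map $(x,y)\mapsto(x+1,y-1)$, which is a translation of the triangular lattice (in the chosen coordinates, it translates along a lattice line since the $x$-axis points southeast). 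Since this shift moves $a,b$ closer to the right boundary without changing their relative position, I expect that any pair of non-intersecting paths from $\{a,b\}$ to a doublet can be ``pushed down'' to a pair of non-intersecting paths from $\{a',b'\}$ to the \emph{same} doublet, and conversely.

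Concretely, first I would set up the bijection on the level of a single path: given a path $p$ from $a$ to some endpoint $v_m$, I would decompose it at its first step. Because $a$ and $a'$ differ by the vector $(1,-1)$, and every Delannoy step is $(1,0),(0,1)$ or $(1,1)$, there is a canonical way to modify the initial portion of $p$ so that it starts at $a'$ and reaches the same $v_m$: either prepend/replace the first step appropriately, or more robustly, observe that the set of Delannoy paths from $a$ to $v_m$ that stay in $\overline{\mathcal D}(n)$ is in bijection with those from $a'$ to $v_m$ because the region $\overline{\mathcal D}(n)$ is itself translation-invariant in the relevant strip (the southwestern boundary of $\overline{\mathcal D}(n)$ is a lattice line of slope matching the shift, so shifting $a$ to $a'$ does not exit the region and does not create new obstructions). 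Then I would lift this to pairs: apply the single-path map to both $p$ and $p'$ simultaneously. The crucial point to check is that the non-intersecting property is preserved — since both paths are shifted by the \emph{same} translation vector $(1,-1)$ on their initial segments and coincide with the original paths afterward, two shifted paths meet if and only if the originals did; hence non-intersecting pairs go to non-intersecting pairs. Since the shift does not touch the endpoints, the doublet condition is automatically preserved, and the weight (all edge weights being $1$) is trivially preserved. This gives a bijection, hence equality of the counts.

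The main obstacle I anticipate is the boundary bookkeeping: I must verify carefully that $a',b'$ being ``still contained in $\overline{\mathcal D}(n)$'' (the hypothesis of the lemma) is enough to guarantee that \emph{every} path from $a$ (resp.\ $b$) appearing in a non-intersecting pair counted by $Q_{V^*}$ can be validly shifted, i.e.\ that no such path is forced to use a vertex which, after shifting, would fall outside $\overline{\mathcal D}(n)$ on the southwestern side. The geometry here is favorable precisely because of the orientation conventions — paths only move in directions $(1,0),(0,1),(1,1)$, so they move away from the southwestern boundary, and shifting the start point one line down toward that boundary keeps the whole path inside; but I would make this rigorous by an explicit description of which lattice points of $\overline{\mathcal D}(n)$ are reachable, mirroring the reachability discussion already used in the proof of Proposition~\ref{prop.Oreverse} (where it was noted that a path from $u_j$ can end at $v_2,\dots,v_{2j}$, etc.). Once reachability is pinned down, the bijection and the verification that it preserves non-intersection, endpoints, and weight are routine, and the proof concludes. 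This argument parallels \cite[Lemma 13]{Lee23}, so I would also point out the analogy and import any lemma from that source that streamlines the reachability step.
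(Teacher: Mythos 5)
The central mechanism of your bijection is incorrect: the downward shift does \emph{not} preserve the endpoint of a path, and hence does not send a pair ending at a doublet to a pair ending at the \emph{same} doublet. In the coordinates of the paper, $a'=a+(1,-1)$, and the allowed Delannoy steps are $(1,0)$, $(0,1)$, $(1,1)$; there is no step realizing the vector $(-1,1)$, so one cannot ``prepend/replace the first step'' to connect $a'$ to the old path, and translating an entire path by $(1,-1)$ moves its endpoint on the right boundary down by two positions. The correct single-path statement, which is the first line of the paper's proof, is $|\mathscr{P}(a,v_{\ell+2})|=|\mathscr{P}(a',v_{\ell})|$ for all $\ell$ (endpoint index shifted by one doublet), not $|\mathscr{P}(a,v_m)|=|\mathscr{P}(a',v_m)|$. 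The latter is false in general: if both held one would get $|\mathscr{P}(a,v_m)|=|\mathscr{P}(a,v_{m+2})|$ for all $m$, contradicting the explicit Delannoy-number counts \eqref{eq.pxv}. So the claim that ``the shift does not touch the endpoints, [hence] the doublet condition is automatically preserved'' is exactly where the argument breaks.

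Because the endpoints shift by one doublet, proving \eqref{eq.trans1} is not a doublet-by-doublet bijection but a reindexing of the sum \eqref{eq.Q*}: $Q_{V^{*}}(a',b')$ equals the sum of the same $2\times 2$ determinants for $(a,b)$ with the doublet index advanced by one, and one must then check that the two boundary terms vanish --- no path from $a$ or $b$ reaches $v_3$ (they are off the southwestern boundary, since $a',b'$ lie in $\overline{\mathcal{D}}(n)$), and no path from $a'$ or $b'$ reaches $v_{2n-1}$ or $v_{2n}$ (they are off the northwestern boundary). This vanishing step is essential and is entirely absent from your proposal; your ``boundary bookkeeping'' worry is aimed at whether shifted paths stay inside the region, which is the easy part (all steps have nonnegative components, so a path never returns toward the boundary its start point was moved toward). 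With the corrected shift-by-one-doublet bijection and the two vanishing observations, the argument closes, and at that point it coincides with the paper's proof (which works with the determinant formula \eqref{eq.Q*} for single-path counts rather than directly with pairs).
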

\begin{proof}
   Notice that the number of paths going from $a$ to $v_{\ell+2}$ is the same as the number of paths going from $a^{\prime}$ to $v_{\ell}$ for all $\ell$. Namely, $|\mathscr{P}(a,v_{\ell+2})| = |\mathscr{P}(a^{\prime},v_{\ell})|$ for all $\ell$. Similarly, $|\mathscr{P}(b,v_{\ell+2})| = |\mathscr{P}(b^{\prime},v_{\ell})|$ for all $\ell$.

   Since $a^{\prime}$ and $b^{\prime}$ are not on the northwestern boundary of $\mathcal{\overline{D}}(n)$ (otherwise, $a$ and $b$ would not be contained in $\mathcal{\overline{D}}(n)$), there is no path starting from $a^{\prime}$ and $b^{\prime}$ could reach $v_{2n-1}$ (and also $v_{2n}$). Hence, $|\mathscr{P}(a^{\prime},v_{2n -1})| = |\mathscr{P}(b^{\prime},v_{2n -1})| = 0$. By a similar argument that $a$ and $b$ are not on the southwestern boundary of $\mathcal{\overline{D}}(n)$, there is no path starting from $a$ or $b$ that could reach $v_3$. Hence, $|\mathscr{P}(a,v_3)| = |\mathscr{P}(b,v_3)| = 0$.

   From the above discussion, we use \eqref{eq.Q*} and shift the index $\ell$ by $1$ to obtain identity:
   \begin{align*}
     Q_{V^{*}}(a^{\prime},b^{\prime}) &= \sum_{2 \leq \ell \leq n-1} \det
          \begin{bmatrix}
          |\mathscr{P}(a^{\prime},v_{2\ell -1})| & |\mathscr{P}(a^{\prime},v_{2\ell})| \\
          |\mathscr{P}(b^{\prime},v_{2\ell -1})| & |\mathscr{P}(b^{\prime},v_{2\ell})|
          \end{bmatrix} =
          \sum_{2 \leq \ell \leq n-1} \det
          \begin{bmatrix}
          |\mathscr{P}(a,v_{2\ell +1})| & |\mathscr{P}(a,v_{2\ell+2})| \\
          |\mathscr{P}(b,v_{2\ell +1})| & |\mathscr{P}(b,v_{2\ell+2})|
          \end{bmatrix} \\
       & = \sum_{3 \leq \ell \leq n} \det
          \begin{bmatrix}
          |\mathscr{P}(a,v_{2\ell -1})| & |\mathscr{P}(a,v_{2\ell})| \\
          |\mathscr{P}(b,v_{2\ell -1})| & |\mathscr{P}(b,v_{2\ell})|
          \end{bmatrix} = Q_{V^{*}}(a,b),
   \end{align*}
   as desired.
\end{proof}

Our next translation invariant lemma shows that $Q_{V^{*}}(a,b)$ is almost invariant under a ``clockwise rotation'' of two points $a$ and $b$, where $a$ lies on the southwestern boundary while $b$ lies on the northwestern boundary of the graph $\mathcal{\overline{D}}(n)$. The lemma is presented below.
\begin{lemma}\label{lemma.trans2}
On the graph $\mathcal{\overline{D}}(n)$, for $n \geq 2$ and $1 \leq i \leq n-1$, we have
  \begin{equation}\label{eq.Q*-invDbar}
    Q_{V^{*}}(x_i,w^n_j) =
    \begin{cases}
        Q_{V^{*}}(x_{i+1},w^n_{j+1}) + (-1)^i, & \text{if $j=1$,} \\
        Q_{V^{*}}(x_{i+1},w^n_{j+1}), & \text{if $2 \leq j \leq n-1$.}
    \end{cases}
  \end{equation}
\end{lemma}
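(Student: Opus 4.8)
The plan is to prove the identity by manipulating the double-sum expression \eqref{eq.Q*} for $Q_{V^*}$, comparing the term-by-term contributions to $Q_{V^*}(x_i, w^n_j)$ and to $Q_{V^*}(x_{i+1}, w^n_{j+1})$, exactly as in the proof of Lemma~\ref{lemma.trans1}. The key structural fact is that shifting a point one lattice line ``downward'' on the graph $\mathcal{\overline{D}}(n)$ shifts the set of reachable endpoints by $2$: namely $|\mathscr{P}(a, v_{\ell+2})| = |\mathscr{P}(a', v_\ell)|$ for the downshift $a \mapsto a'$. The difference between the present lemma and Lemma~\ref{lemma.trans1} is that here $x_{i+1}$ and $w^n_{j+1}$ are \emph{not} obtained from $x_i$ and $w^n_j$ by a common downshift; rather, $x_{i+1}$ sits on the southwestern boundary one lattice line \emph{up}, and $w^n_{j+1}$ sits on the northwestern boundary one step toward the corner. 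So I first need to record precisely how the reachable-endpoint counts for $x_i$ versus $x_{i+1}$, and for $w^n_j$ versus $w^n_{j+1}$, are related.

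First I would set up coordinates: from the discussion after Proposition~\ref{prop.Oreverse}, the path from $x_i$ can end at $v_2,\dots,v_{2i}$ (on $\mathcal{\overline{D}}(n)$, with $v_1$ deleted), and the path from $w^n_j$ can end at $v_{2j},\dots,v_{2n}$. Passing from $x_i$ to $x_{i+1}$ extends the reachable range on the high end by one doublet and (because of the geometry near the southwest corner) the path-count generating functions satisfy a clean shift relation; similarly passing from $w^n_j$ to $w^n_{j+1}$ contracts the reachable range at the low end. I would then write $Q_{V^*}(x_{i+1}, w^n_{j+1})$ as a sum of $2\times 2$ determinants over doublets $\ell$, reindex, and identify the overlap with the sum defining $Q_{V^*}(x_i, w^n_j)$. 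The extreme doublets (the largest $\ell$, involving $v_{2n-1}, v_{2n}$, and the smallest relevant $\ell$) are where the two sums differ; most $2\times 2$ determinants in one sum are term-by-term equal (after reindexing) to those in the other, and the discrepancy collapses to a single boundary determinant.

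The main work — and the main obstacle — is evaluating that leftover boundary determinant and showing it equals $(-1)^i$ when $j=1$ and $0$ when $2 \le j \le n-1$. The vanishing for $2 \le j \le n-1$ should follow because in that range the ``new'' doublet that enters one sum but not the other has both of its endpoints unreachable from one of the two starting points, forcing a zero row in the relevant $2\times 2$ matrix, so that determinant is $0$ and the two sums agree exactly. For $j = 1$ the leftover determinant is nonzero: here $w^n_1$ can reach all of $v_2,\dots,v_{2n}$, so the lowest doublet $v_1^* = \{v_1, v_2\}$ (or $v_2^*$, depending on which endpoint survives on $\mathcal{\overline{D}}(n)$) contributes a genuine term to $Q_{V^*}(x_{i+1}, w^n_2)$ that has no counterpart; I expect this residual determinant to reduce to $|\mathscr{P}(x_{i+1}, v_{\text{low}})| \cdot |\mathscr{P}(w^n_2, v_{\text{low}+1})|$ with one factor equal to $1$ (a unique corner path) and the other equal to $(-1)^i$ after accounting for orientation/sign conventions — more honestly, the combinatorial counts are nonnegative, so the $(-1)^i$ must come from the alternating structure already present in the determinant expansion, and pinning down that sign carefully (tracking which of $v_{2\ell-1}, v_{2\ell}$ is reachable and how the determinant rows line up) is the delicate step. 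I would finish by checking the base case $n = 2$ directly to make sure the indexing at the corner is consistent with the stated formula.
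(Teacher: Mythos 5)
There is a genuine gap at the heart of your plan. Your argument rests on the claim that, after reindexing the doublets, ``most $2\times 2$ determinants in one sum are term-by-term equal to those in the other,'' so that the difference $Q_{V^{*}}(x_i,w^n_j)-Q_{V^{*}}(x_{i+1},w^n_{j+1})$ collapses to a single boundary determinant. That premise is false. The shift relation $|\mathscr{P}(a,v_{\ell+2})|=|\mathscr{P}(a^{\prime},v_{\ell})|$ from Lemma~\ref{lemma.trans1} holds only when $a^{\prime}$ is a \emph{rigid translate} of $a$; here $x_{i+1}$ and $w^n_{j+1}$ are different boundary points, not translates of $x_i$ and $w^n_j$, and the individual path counts genuinely change rather than shift. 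Concretely, $|\mathscr{P}(x_i,v_{2\ell})|=d_{i-\ell,\ell-1}$ while $|\mathscr{P}(x_{i+1},v_{2\ell+2})|=d_{i-\ell,\ell}$, and these differ by the full Delannoy recurrence $d_{i-\ell,\ell}=d_{i-\ell,\ell-1}+d_{i-\ell-1,\ell}+d_{i-\ell-1,\ell-1}$, not by zero. So the two determinant sums do not agree term by term on their overlap, and there is no single leftover boundary determinant to evaluate; the step you identify as ``the main work'' is preceded by a step that already fails. (You even note at the outset that the move is not a common downshift, but then proceed as if a clean shift relation for the endpoint counts still held.)

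The identity really does require a recursive mechanism that propagates the extra Delannoy terms. The paper's proof handles the boundary cases $i=1$ and $j=n-1$ by direct counting, and then argues Case~3 by induction on $n$: it partitions the paths out of $w_j^n$ (respectively out of $x_{i+1}$) according to their first step, which by linearity of the determinants yields
\begin{equation*}
  Q_{V^{*}}(x_i,w^n_j) = Q_{V^{*}}(x_i,w^n_{j+1}) + Q_{V^{*}}(x_i,w^{n-1}_j) + Q_{V^{*}}(x_i,w^{n-1}_{j+1}),
\end{equation*}
and a companion relation for $Q_{V^{*}}(x_{i+1},w^n_{j+1})$; the pieces on the smaller graph $\mathcal{\overline{D}}(n-1)$ are then matched using Lemma~\ref{lemma.trans1} and the induction hypothesis, which is precisely where the $(-1)^{i}\delta_{j,1}$ correction enters and cancels or survives. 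If you want to avoid induction, an alternative would be to translate the whole statement into an explicit Delannoy-number identity via \eqref{eq.pxv} and its reflection for the $w$-points and prove that directly, but some version of the three-term recurrence is unavoidable; a pure reindexing of the doublet sum cannot produce the alternating correction term.
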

\begin{proof}
  We proceed with the following three cases.
  \begin{itemize}
    \item[Case 1:] $i=1$. \\
        The path starting from $x_1$ has no option but to end at $v_2$ (the same point as $x_1$), which does not belong to any doublet. So, no matter where the second path starts from, it is impossible to form a pair of non-intersecting paths whose ending points are in a doublet. This implies that $Q_{V^{*}}(x_1,w^n_j) = 0$ for $1 \leq j \leq n-1$.

        On the other hand, the path starting from $x_2$ must end at $v_3$, allowing the second path to end at $v_4$. This happens only when the second path starts from $w_2^{n}$ (paths starting from $w_j^{n},j>2$ can not reach $v_4$) and there is only one such pair of non-intersecting paths. Then we obtain
        \begin{equation*}
            Q_{V^{*}}(x_2,w^n_{j+1}) =
            \begin{cases}
                1, & \text{if $j=1$,} \\
                0, & \text{if $2 \leq j \leq n-1$.}
            \end{cases}
          \end{equation*}
        Thus, \eqref{eq.Q*-invDbar} holds for $i=1$.

    \item[Case 2:] $j=n-1$. \\
        The idea is similar to Case 1. To form such a pair of non-intersecting paths, the path starting from $w_{n-1}^{n}$ must end at $v_{2n-2}$, allowing the other path to end at $v_{2n-3}$. This happens when the latter path starts from $x_{n-1}$ (paths starting from $x_i,i<n-1$ can not reach $v_{2n-3}$). Thus,
        \begin{equation*}
            Q_{V^{*}}(x_{i},w^n_{n-1}) =
            \begin{cases}
                1, & \text{if $i=n-1$,} \\
                0, & \text{if $1 \leq i \leq n-2$.}
            \end{cases}
        \end{equation*}

        On the other hand, the path starting from $w_{n}^{n}$ has no option but to end at $v_{2n}$, the other path must start from $x_n$ and end at $v_{2n-1}$. Thus,
        \begin{equation*}
            Q_{V^{*}}(x_{i+1},w^n_{n}) =
            \begin{cases}
                1, & \text{if $i=n-1$,} \\
                0, & \text{if $1 \leq i \leq n-2$.}
            \end{cases}
        \end{equation*}
        This shows that \eqref{eq.Q*-invDbar} holds for $j=n-1$.

    \item[Case 3:] $2 \leq i \leq n-1$ and $1 \leq j \leq n-2$. \\
        We will prove it by induction on $n$, with the base case being $n=3$, this implies that $i=2$ and $j=1$. On the left-hand side of \eqref{eq.Q*-invDbar}, $Q_{V^{*}}(x_{2},w^3_{1})=2$ since there are two such pairs of non-intersecting paths, where one path connects $x_2$ with $v_3$, the other path connects $w_1^{3}$ with $v_4$ in two ways. On the right-hand side of \eqref{eq.Q*-invDbar}, $Q_{V^{*}}(x_{3},w^3_{2}) + (-1)^2 = 1+1 = 2$ since there is only one pair of non-intersecting paths, where one path connects $x_3$ with $v_3$, the other path connects $w^3_2$ with $v_4$. Hence, \eqref{eq.Q*-invDbar} holds for the base case $n=3$.

        Now, we assume the statement holds for $n-1$, that is, for $2 \leq i \leq n-2$ and $1 \leq j \leq n-3$,
        \begin{equation}\label{eq.Q*induc}
          Q_{V^{*}}(x_i,w^n_j) = Q_{V^{*}}(x_{i+1},w^n_{j+1}) + (-1)^i \delta_{j,1},
        \end{equation}
        where $\delta$ denotes the Kronecker delta function. Due to Cases 1 and 2, the induction hypothesis can be extended to $1 \leq i \leq n-2$ and $1 \leq j \leq n-2$. It would be more convenient to work with the equivalent hypothesis by shifting the index $i$ by $1$:
        \begin{equation}\label{eq.Q*inducnew}
          Q_{V^{*}}(x_{i-1},w^n_j) = Q_{V^{*}}(x_{i},w^n_{j+1}) + (-1)^{i-1} \delta_{j,1},
        \end{equation}
        for $2 \leq i \leq n-1$ and $1 \leq j \leq n-2$.

        The idea is to partition paths starting from $w_j^n$ into the disjoint union of three sets, based on three possible first steps of a path; see Figure \ref{fig.tri-partition}. In other words, for $1 \leq j \leq n-2$, we have
        \begin{equation*}
          \mathscr{P}(w_j^n,v) = \mathscr{P}(w_{j+1}^n,v) \cup \mathscr{P}(w_{j+1}^{n-1},v) \cup \mathscr{P}(w_j^{n-1},v),
        \end{equation*}
        and
        \begin{equation*}
          |\mathscr{P}(w_j^n,v)| = |\mathscr{P}(w_{j+1}^n,v)| + |\mathscr{P}(w_{j+1}^{n-1},v)| + |\mathscr{P}(w_j^{n-1},v)|,
        \end{equation*}
        for any $v \in V$. Fix $x_i$, using \eqref{eq.Q*} and the linearity of determinants, we obtain
        \begin{equation}\label{eq.Q*induc1}
          Q_{V^{*}}(x_i,w^n_j) = Q_{V^{*}}(x_i,w^n_{j+1}) + Q_{V^{*}}(x_i,w^{n-1}_j) + Q_{V^{*}}(x_i,w^{n-1}_{j+1}).
        \end{equation}

        Following a similar argument, we also obtain the identity when partitioning paths starting from $x_{i+1}$ into the disjoint union of three sets. For $2 \leq i \leq n-1$,
        \begin{equation}\label{eq.Q*induc2}
          Q_{V^{*}}(x_{i+1},w^n_{j+1}) = Q_{V^{*}}(x_i,w^n_{j+1}) + Q_{V^{*}}(y_{i-1},w^{n}_{j+1}) + Q_{V^{*}}(y_{i},w^{n}_{j+1}).
        \end{equation}

        By Lemma \ref{lemma.trans1} and the induction hypothesis \eqref{eq.Q*inducnew}, we have
        \begin{align}
           Q_{V^{*}}(y_{i},w^{n}_{j+1}) & = Q_{V^{*}}(x_i,w^{n-1}_{j}) \label{eq.Q*induc3} \\
           Q_{V^{*}}(y_{i-1},w^{n}_{j+1}) & = Q_{V^{*}}(x_{i-1},w^{n-1}_{j}) = Q_{V^{*}}(x_{i},w^{n-1}_{j+1}) + (-1)^{i-1}\delta_{j,1}, \label{eq.Q*induc4}
        \end{align}
        for $2 \leq i \leq n-1$ and $1 \leq j \leq n-2$.

        Substituting the terms on the right-hand side of \eqref{eq.Q*induc2} by \eqref{eq.Q*induc3} and \eqref{eq.Q*induc4}, and then comparing them with the terms on the right-hand side of \eqref{eq.Q*induc1}, we finally obtain the desired identity \eqref{eq.Q*-invDbar}.
  \end{itemize}
This completes the proof of Lemma~\ref{lemma.trans2}.
\end{proof}

Now, we are ready to prove the translation invariant property in Proposition \ref{prop.Oreverse} (2), which states that $r_{i,j}^n = r_{i+1,j+1}^n$ for $1 \leq i,j \leq n-1$. Recall that $r_{i,j}^n = Q_{V^{*}}(u_j,w^n_i)$.

\begin{proof}[Proof of Proposition~\ref{prop.Oreverse} (2)]
On the graph $\mathcal{D}(n)$, it suffices to show that 
\begin{equation*}
    Q_{V^{*}}(u_j,w^n_i) = Q_{V^{*}}(u_{j+1},w^n_{i+1}), \text{ for $1 \leq i,j \leq n-1$.}
\end{equation*}
When $j=1$, using a similar argument in Case 1 of the proof of Lemma~\ref{lemma.trans2}, we obtain
  \begin{equation*}
    Q_{V^{*}}(u_1,w^n_i) = \delta_{i,1} = Q_{V^{*}}(u_{2},w^n_{i+1}), \text{ for $1 \leq i \leq n-1$.}
  \end{equation*}

When $2 \leq j \leq n-1$, using a similar idea of partitioning paths as in Case 3 of the proof of Lemma~\ref{lemma.trans2}, the paths starting from $u_j$ are partitioned into the disjoint union of two sets, based on two possible first steps of a path. In other words, for $2 \leq j \leq n-1$, we have
    \begin{equation}\label{eq.partitionu}
      \mathscr{P}(u_j,v) = \mathscr{P}(x_j,v) \cup \mathscr{P}(x_{j-1},v),
    \end{equation}
  for any $v \in V$. Similarly, fix a point $w_i^n$, we have
    \begin{equation}\label{eq.Q*Dpartition}
      Q_{V^{*}}(u_j,w^n_i) = Q_{V^{*}}(x_j,w^n_{i}) + Q_{V^{*}}(x_{j-1},w^{n}_i).
    \end{equation}

Finally, we obtain
  \begin{align*}
    Q_{V^{*}}(u_j,w^n_i) & = Q_{V^{*}}(x_j,w^n_{i}) + Q_{V^{*}}(x_{j-1},w^{n}_i) & \text{(by \eqref{eq.Q*Dpartition})} \\
     & = \left(Q_{V^{*}}(x_{j+1},w^n_{i+1}) + (-1)^{j}\delta_{i,1}\right) +  \left(Q_{V^{*}}(x_{j},w^{n}_{i+1}) + (-1)^{j-1}\delta_{i,1}\right) & \text{(by \eqref{eq.Q*-invDbar})} \\
     & = Q_{V^{*}}(u_{j+1},w^n_{i+1}), & \text{(by \eqref{eq.Q*Dpartition})}
  \end{align*}
  for $2 \leq j \leq n-1$ and $1 \leq i \leq n-1$. This completes the proof of Proposition~\ref{prop.Oreverse} (2).
\end{proof}

\subsection{A three-term recurrence relation}\label{sec.recurrence}

In the following lemma, we present a three-term recurrence relation of the number of pairs of non-intersecting paths $Q_{V^{*}}(x_i,w^n_2)$ on the graph $\mathcal{\overline{D}}(n)$. Note that the second path in a pair has the fixed starting point $w_2^n$.

\begin{lemma}\label{lemma.rec}
  On the graph $\mathcal{\overline{D}}(n)$, for $n \geq 4$ and $3 \leq i \leq n-1$, we have
  \begin{equation}\label{eq.lemmarec}
    Q_{V^{*}}(x_i,w^n_2) = Q_{V^{*}}(x_i,w^{n-1}_2) + Q_{V^{*}}(x_{i-1},w^{n-1}_2) + Q_{V^{*}}(x_{i-1},w^n_2).
  \end{equation}
\end{lemma}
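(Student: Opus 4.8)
The plan is to mimic the partitioning-of-paths technique used in Case 3 of the proof of Lemma~\ref{lemma.trans2}, but now expanding according to the first step of the path that starts at $w_2^n$ rather than the path on the southwestern side. On the graph $\mathcal{\overline{D}}(n)$, a path starting at $w_2^n$ has three possible first steps, and exactly as in that earlier argument one gets the disjoint decomposition $\mathscr{P}(w_2^n,v) = \mathscr{P}(w_3^n,v) \cup \mathscr{P}(w_3^{n-1},v) \cup \mathscr{P}(w_2^{n-1},v)$ for every $v \in V$, hence $|\mathscr{P}(w_2^n,v)| = |\mathscr{P}(w_3^n,v)| + |\mathscr{P}(w_3^{n-1},v)| + |\mathscr{P}(w_2^{n-1},v)|$. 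Plugging this into the determinant formula \eqref{eq.Q*} for $Q_{V^{*}}(x_i,w_2^n)$ and using linearity of the determinant in the second row, I obtain
\begin{equation*}
  Q_{V^{*}}(x_i,w^n_2) = Q_{V^{*}}(x_i,w^n_3) + Q_{V^{*}}(x_i,w^{n-1}_3) + Q_{V^{*}}(x_i,w^{n-1}_2).
\end{equation*}
Two of the three terms on the right are already in the desired form modulo a translation: by Lemma~\ref{lemma.trans2} (the $j \geq 2$ branch, so with no correction term) we have $Q_{V^{*}}(x_i,w^n_3) = Q_{V^{*}}(x_{i-1},w^n_2)$, and likewise $Q_{V^{*}}(x_i,w^{n-1}_3) = Q_{V^{*}}(x_{i-1},w^{n-1}_2)$. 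Substituting these two identities gives exactly \eqref{eq.lemmarec}.

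The steps, in order, are: (1) verify the three-way first-step partition of paths out of $w_2^n$ on $\mathcal{\overline{D}}(n)$, checking that the three sub-destinations $w_3^n$, $w_3^{n-1}$, $w_2^{n-1}$ are the correct labels after one step (this is the same geometric fact already used in Lemma~\ref{lemma.trans2}, so it can be cited rather than reproved); (2) apply \eqref{eq.Q*} and expand by linearity of the $2\times2$ determinants in their second row to get the three-term expansion above; (3) invoke Lemma~\ref{lemma.trans2} twice to rewrite the $w_3$-indexed terms as $w_2$-indexed terms with the index $i$ shifted down by one; (4) read off \eqref{eq.lemmarec}. The hypotheses $n \geq 4$ and $3 \leq i \leq n-1$ are exactly what is needed for Lemma~\ref{lemma.trans2} to apply in the form used (the shifted points $w_3^{n-1}$, $x_{i-1}$ must still lie in the relevant graphs and stay off the degenerate boundary rows handled by Cases 1 and 2).

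I do not expect a serious obstacle here: the computation is a direct transcription of the Case~3 argument of Lemma~\ref{lemma.trans2} with the roles of the two path families swapped, and no new combinatorial input is required. The one point that deserves care is bookkeeping at the boundary of the index range — confirming that for $i = 3$ and for $i = n-1$ the terms $Q_{V^{*}}(x_{i-1},w^{n-1}_2)$ and $Q_{V^{*}}(x_i,w^{n-1}_2)$ are still legitimate (i.e. that $x_{i-1}$ and $w_2^{n-1}$ are genuine vertices of $\mathcal{\overline{D}}(n-1)$ and that no path starting at $w_2^n$ can escape the subgraph on its first step) — but this is routine and follows the same checks already carried out for Lemma~\ref{lemma.trans2}.
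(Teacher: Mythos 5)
Your argument is correct, and the index bookkeeping goes through: the first-step decomposition of $\mathscr{P}(w_2^n,v)$ needs $2\le n-2$, and the two applications of Lemma~\ref{lemma.trans2} with $j=2$ (once on $\mathcal{\overline{D}}(n)$ and once on $\mathcal{\overline{D}}(n-1)$) need $2\le i-1\le n-2$; both hold under the hypotheses $n\ge 4$ and $3\le i\le n-1$. The route is genuinely a bit different from the paper's, though it draws on the same toolbox. You expand the path leaving $w_2^n$ according to its first step --- this is exactly \eqref{eq.Q*induc1} with $j=2$, an identity already derived in the proof of Lemma~\ref{lemma.trans2} --- and then convert the two $w_3$-indexed terms using only the correction-free branch ($2\le j\le n-1$) of Lemma~\ref{lemma.trans2}. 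The paper instead expands the path leaving $x_i$ (identity \eqref{eq.Q*induc2}), pushes the resulting $y$-points down one lattice line via Lemma~\ref{lemma.trans1}, and then invokes the $j=1$ branch of Lemma~\ref{lemma.trans2}, so it has to carry the two correction terms $(-1)^{i-1}$ and $(-1)^{i-2}$ and observe that they cancel. Your version is shorter, dispenses with Lemma~\ref{lemma.trans1} and with all sign bookkeeping, and rests on the same implicit identification --- also used without comment in the paper's own induction --- of $Q_{V^{*}}(x_{i},w_{j}^{n-1})$, defined via paths in $\mathcal{\overline{D}}(n)$ issuing from the interior point labeled $w_{j}^{n-1}$, with the corresponding quantity on the embedded copy of $\mathcal{\overline{D}}(n-1)$ (legitimate here since no path from $x_i$ with $i\le n-1$, nor from $w_j^{n-1}$, can reach the northwestern boundary of $\mathcal{\overline{D}}(n)$ or the doublet $v_n^{*}$). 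The paper's arrangement has the mild virtue of being a direct continuation of the chain of identities already set up in Section~\ref{sec.transinv}, but your derivation is the cleaner of the two.
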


\begin{proof}
  Following the same idea of partitioning paths starting from $x_i$ in the proof of Lemma~\ref{lemma.trans2}, and using Lemmas \ref{lemma.trans1} and \ref{lemma.trans2}, we obtain
  \begin{align*}
   Q_{V^{*}}(x_i,w^n_2) & =Q_{V^{*}}(y_{i-1},w^n_2) + Q_{V^{*}}(y_{i-2},w^{n}_2) + Q_{V^{*}}(x_{i-1},w^{n}_2) & \text{(by \eqref{eq.Q*induc2})} \\
     & =  Q_{V^{*}}(x_{i-1},w^{n-1}_1) + Q_{V^{*}}(x_{i-2},w^{n-1}_1) + Q_{V^{*}}(x_{i-1},w^{n}_2) & \text{(by \eqref{eq.trans1})} \\
     & = \left(Q_{V^{*}}(x_{i},w^{n-1}_2) + (-1)^{i-1}\right) + \left(Q_{V^{*}}(x_{i-1},w^{n-1}_2)+(-1)^{i-2} \right) & \\
     & \quad + Q_{V^{*}}(x_{i-1},w^{n}_2) & \text{(by \eqref{eq.Q*-invDbar})} \\
     & = Q_{V^{*}}(x_i,w^{n-1}_2) + Q_{V^{*}}(x_{i-1},w^{n-1}_2) + Q_{V^{*}}(x_{i-1},w^n_2), &
  \end{align*}
   for $3 \leq i \leq n-1$, as desired. This completes the proof of Lemma \ref{lemma.rec}.
\end{proof}

Now, we are ready to prove the third property in Proposition \ref{prop.Oreverse} which states that $r^{n}_{1,j} = r^{n}_{1,j-1} + r^{n-1}_{1,j} + r^{n-1}_{1,j-1}$, for $n \geq 3$ and $2 \leq j \leq n-1$.
\begin{proof}[Proof of Proposition \ref{prop.Oreverse} (3)]
On the graph $\mathcal{D}(n)$, it suffices to show that
\begin{equation*}
    Q_{V^{*}}(u_j,w^n_1) = Q_{V^{*}}(u_{j-1},w^n_1) + Q_{V^{*}}(u_{j},w^{n-1}_1) + Q_{V^{*}}(u_{j-1},w^{n-1}_1),
\end{equation*}
for $n \geq 3$ and $2 \leq j \leq n-1$.

When $j=2$ and $n \geq 3$, this can be computed directly. On the left-hand side, since the first path must connect $u_2$ with $v_3$, the second path must connect $x_1^n$ with $v_4$, and there are $2n-4$ such paths that do not intersect with the first path. As a consequence,
\begin{equation}\label{eq.r12}
  Q_{V^{*}}(u_2,w^n_1) = 2n-4.
\end{equation}
On the right-hand side, we have
\begin{equation*}
  Q_{V^{*}}(u_1,w^n_1) + Q_{V^{*}}(u_2,w^{n-1}_1) + Q_{V^{*}}(u_1,w^{n-1}_1) = 1+ \left(2(n-1)-4\right)+1 = 2n-4.
\end{equation*}

When $3 \leq j \leq n-1$ and $n \geq 4$, we again partition the paths starting from $u_j$ into the disjoint union of two sets, and then use Lemmas \ref{lemma.trans2} and \ref{lemma.rec}. As a result,
\begin{align*}
  Q_{V^{*}}(u_j,w^n_1) & = Q_{V^{*}}(u_{j+1},w^n_2) & \text{(by \eqref{eq.Q*-invDbar})}\\
   & = Q_{V^{*}}(x_{j+1},w^n_2) + Q_{V^{*}}(x_{j},w^n_2) & \text{(by \eqref{eq.Q*Dpartition})}\\
   & = Q_{V^{*}}(x_{j},w^n_2) + Q_{V^{*}}(x_{j+1},w^{n-1}_2) + Q_{V^{*}}(x_{j},w^{n-1}_2) & \\
   & \quad + Q_{V^{*}}(x_{j-1},w^n_2) + Q_{V^{*}}(x_{j},w^{n-1}_2) + Q_{V^{*}}(x_{j-1},w^{n-1}_2) & \text{(by \eqref{eq.lemmarec})}\\
   & = Q_{V^{*}}(u_{j},w^n_2) + Q_{V^{*}}(u_{j+1},w^{n-1}_2) + Q_{V^{*}}(u_{j},w^{n-1}_2) & \text{(by \eqref{eq.Q*Dpartition})} \\
   & = Q_{V^{*}}(u_{j-1},w^n_1) + Q_{V^{*}}(u_{j},w^{n-1}_1) + Q_{V^{*}}(u_{j-1},w^{n-1}_1), & \text{(by \eqref{eq.Q*-invDbar})}
\end{align*}
as desired. This completes the proof of Proposition \ref{prop.Oreverse} (3).
\end{proof}

We close this subsection with a useful identity of the number of pairs of non-intersecting paths on $\mathcal{D}(n)$, which will be applied to prove the last part of Proposition \ref{prop.Oreverse}.
\begin{lemma}\label{lemma.w1}
  On the graph $\mathcal{D}(n)$, for $n \geq 2$, we have
  \begin{equation}\label{eq.w1}
    Q_{V^{*}}(u_{n},w^n_1) = \frac{1}{2} Q_{V^{*}}(u_{n-1},u_n) +(-1)^{n-1}.
  \end{equation}
\end{lemma}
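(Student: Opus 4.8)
The plan is to first strip the statement of its fractional coefficient and its sign, and then to prove the resulting clean integer identity by the same kind of path‑partition argument that was used for Proposition~\ref{prop.Oreverse}(2)--(3). Recall that $Q_{V^{*}}(u_{n-1},u_n)$ is the superdiagonal entry $a_{n-1,n}$ of the matrix $A$, so for $n\geq 3$ the third line of the recurrence \eqref{eq.rec} gives $Q_{V^{*}}(u_{n-1},u_n)=Q_{V^{*}}(u_{n-2},u_n)+Q_{V^{*}}(u_{n-2},u_{n-1})+2(-1)^{n}$. Since $(-1)^{n}+(-1)^{n-1}=0$, the identity \eqref{eq.w1} is equivalent, for $n\geq 3$, to
\[
  2\,Q_{V^{*}}(u_n,w^n_1)=Q_{V^{*}}(u_{n-2},u_{n-1})+Q_{V^{*}}(u_{n-2},u_n),
\]
which carries no $\tfrac{1}{2}$ and no signs; here $Q_{V^{*}}(u_n,w^n_1)=r^{n}_{1,n}$ is precisely the corner entry of $R_n$. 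The case $n=2$ is a small direct count: $Q_{V^{*}}(u_2,w^2_1)=0$ (consistent with \eqref{eq.r12} at $n=2$), while $\tfrac{1}{2}a_{1,2}-1=0$.

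To prove the displayed identity I would expand both sides through \eqref{eq.Q*}. On the right, decomposing the paths leaving $u_{n-1}$ and $u_n$ by their first step as in \eqref{eq.partitionu}--\eqref{eq.Q*Dpartition}, and then doing the same for the paths leaving $u_{n-2}$, turns $Q_{V^{*}}(u_{n-2},u_{n-1})+Q_{V^{*}}(u_{n-2},u_n)$ into a combination of terms $Q_{V^{*}}(x_a,x_b)$ on the truncated graph $\overline{\mathcal{D}}(n)$, with the ``diagonal'' contributions $Q_{V^{*}}(x_a,x_a)=0$ falling away. On the left, decomposing the paths leaving $u_n$ gives $Q_{V^{*}}(u_n,w^n_1)=Q_{V^{*}}(x_n,w^n_1)+Q_{V^{*}}(x_{n-1},w^n_1)$ on $\overline{\mathcal{D}}(n)$, in which the first term is a corner‑forced configuration. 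I would then push both sides toward the same small family of corner configurations, using Lemma~\ref{lemma.trans2} to pivot $Q_{V^{*}}(x_i,w^n_1)$ into $Q_{V^{*}}(x_{i+1},w^n_2)+(-1)^{i}$, Lemma~\ref{lemma.trans1} to slide points down one lattice line, and the three‑term recurrence of Lemma~\ref{lemma.rec} for $Q_{V^{*}}(x_i,w^n_2)$ to telescope. Equivalently, one can organise this as an induction on $n$ whose step is exactly this chain of lemmas, with base case $n=2$ as above.

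The main obstacle is the left corner of $\mathcal{D}(n)$: both $u_n$ and $w^n_1$ live there, and the translation lemmas only slide points along interior lattice lines, so one cannot ``push $u_n$ upward'' the way one pushes $u_j$ for $j<n$ in the proof of Proposition~\ref{prop.Oreverse}(3). Hence the reduction must bottom out at genuinely corner‑localised pairs of paths — ones in which a path is forced along the northwestern boundary into $v_{2n-1}$ or $v_{2n}$ — which must be counted by hand. These corner configurations are exactly what produces the additive surplus, and matching the parity of the Kronecker‑delta corrections in \eqref{eq.Q*-invDbar} against the parity of that forced step is what pins it down to $2(-1)^{n-1}$, hence to $(-1)^{n-1}$ once the $\tfrac{1}{2}$ is restored in \eqref{eq.w1}. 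The $\tfrac{1}{2}$ itself is harmless: once $a_{n-1,n}$ has been expanded via its own recurrence it disappears on its own, the point being that in the first‑step decomposition of $Q_{V^{*}}(u_{n-2},u_{n-1})+Q_{V^{*}}(u_{n-2},u_n)$ the paths from $u_{n-1}$ and from $u_n$ share a common first‑step class, so the right‑hand side naturally carries a factor of $2$.
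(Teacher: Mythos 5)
Your opening reduction is correct and is genuinely different from the paper: since $Q_{V^{*}}(u_{n-1},u_n)=a_{n-1,n}$, the superdiagonal case of \eqref{eq.rec} gives $a_{n-1,n}=a_{n-2,n}+a_{n-2,n-1}+2(-1)^{n-2}$, so \eqref{eq.w1} is indeed equivalent (for $n\geq 3$) to $2\,Q_{V^{*}}(u_n,w^n_1)=Q_{V^{*}}(u_{n-2},u_{n-1})+Q_{V^{*}}(u_{n-2},u_n)$, and your base case $n=2$ checks out. The paper does not take this route: it keeps the $\tfrac12$ and the sign, expands $Q_{V^{*}}(u_{n-1},u_n)$ by first steps into $Q_{V^{*}}(x_{n-2},x_{n-1})+Q_{V^{*}}(x_{n-2},x_n)+Q_{V^{*}}(x_{n-1},x_n)$, identifies $x_{n-1}=w_1^{n-1}$ and $x_n=w_1^n$, applies Lemma~\ref{lemma.trans2} to the first two terms (the two $\tfrac12(-1)^{n-2}$ corrections cancel the $(-1)^{n-1}$), and then the partition $Q_{V^{*}}(x_{n-1},w_1^n)=Q_{V^{*}}(x_{n-1},w_2^{n-1})+Q_{V^{*}}(x_{n-1},w_2^n)$ turns the remaining bracket into $2\,Q_{V^{*}}(x_{n-1},w_1^n)$, which is exactly $2$ times the left-hand side. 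That last partition identity is what resolves the factor $\tfrac12$; no appeal to \eqref{eq.rec} is needed.

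The gap is in the second half of your argument, which is only a plan and which does not close as described. After decomposing $u_{n-1}$, $u_n$ and $u_{n-2}$ by first steps, your right-hand side becomes $Q_{V^{*}}(u_{n-2},x_{n-2})+2Q_{V^{*}}(u_{n-2},x_{n-1})+Q_{V^{*}}(u_{n-2},x_n)$; the middle term carries your advertised factor of $2$, but the outer two terms do not pair up, so the right-hand side does not ``naturally carry a factor of $2$.'' Worse, the term $Q_{V^{*}}(u_{n-2},x_{n-2})=Q_{V^{*}}(u_{n-2},w_1^{n-2})$ is a corner quantity two levels down, and if one pushes everything with Lemmas~\ref{lemma.trans1}, \ref{lemma.trans2} and \ref{lemma.rec} as you propose, the computation bottoms out at an identity of the form $Q_{V^{*}}(x_{n-1},w_2^{n-1})=Q_{V^{*}}(x_{n-2},w_2^{n-2})+Q_{V^{*}}(x_{n-2},w_2^{n-1})+(-1)^{n-1}$, i.e.\ at another corner identity of exactly the type being proved, which is not covered by any of the cited lemmas (Lemma~\ref{lemma.rec} excludes the corner index). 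So the ``telescoping'' requires either a genuine induction whose step you have not set up, or the missing partition identity for paths leaving $w_1^n$ that the paper uses. Finally, your last paragraph is internally inconsistent: after the reduction via \eqref{eq.rec} there is no residual $(-1)^{n-1}$ left to be produced by corner configurations, so the claim that the corner count ``pins it down to $2(-1)^{n-1}$'' does not describe the identity you actually set out to prove. As it stands, the core of the proof is missing.
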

\begin{proof}
  Note that if two points $a$ and $b$ coincide on $\mathcal{D}(n)$, then $Q_{V^{*}}(a,b) = 0$ since the pair of paths start at the same point, they are always intersecting. We partition the path starting from $u_n$ and $u_{n-1}$ into the disjoint union of two sets, respectively. On the left-hand side of \eqref{eq.w1}, we obtain
  \begin{equation}\label{eq.w1-1}
    Q_{V^{*}}(u_{n},w^n_1) = Q_{V^{*}}(x_{n},w^n_1) + Q_{V^{*}}(x_{n-1},w^n_1) = Q_{V^{*}}(x_{n-1},w^n_1),
  \end{equation}
  because the points $x_n$ and $w^n_1$ coincide. On the right-hand side of \eqref{eq.w1}, we have
  \begin{align}
   & \frac{1}{2} Q_{V^{*}}(u_{n-1},u_n) +(-1)^{n-1} \nonumber \\
   & \quad = \frac{1}{2} \left( Q_{V^{*}}(x_{n-2},x_{n-1}) + Q_{V^{*}}(x_{n-2},x_{n}) + Q_{V^{*}}(x_{n-1},x_{n-1}) +  Q_{V^{*}}(x_{n-1},x_{n}) \right) + (-1)^{n} \nonumber \\
   & \quad = \frac{1}{2} \left( Q_{V^{*}}(x_{n-2},w_1^{n-1}) + Q_{V^{*}}(x_{n-2},w_1^{n}) + Q_{V^{*}}(x_{n-1},w_1^{n}) \right) + (-1)^{n}. \label{eq.w1-2}
  \end{align}

Applying Lemma~\ref{lemma.trans2} to the first two terms in \eqref{eq.w1-2},  we obtain
    \begin{align*}
      & \frac{1}{2} \left( Q_{V^{*}}(x_{n-1},w_2^{n-1}) + (-1)^{n-2} + Q_{V^{*}}(x_{n-1},w_2^{n}) + (-1)^{n-2} + Q_{V^{*}}(x_{n-1},w_1^{n}) \right) + (-1)^{n}  \\
      & \quad = \frac{1}{2} \left( Q_{V^{*}}(x_{n-1},w_2^{n-1}) + Q_{V^{*}}(x_{n-1},w_2^{n}) + Q_{V^{*}}(x_{n-1},w_1^{n}) \right)  \\
      & \quad = Q_{V^{*}}(x_{n-1},w_1^{n}),
    \end{align*}
because the last equality follows from partitioning the paths starting from $w^n_1$, $Q_{V^{*}}(x_{n-1},w^n_1) = Q_{V^{*}}(x_{n-1},w_2^{n-1}) + Q_{V^{*}}(x_{n-1},w_2^{n})$. This equals to the right-hand side of \eqref{eq.w1-1}, and thus completes the proof of Lemma~\ref{lemma.w1}.
\end{proof}

\subsection{Involutory matrix}\label{sec.involutory}

In this subsection, we will prove the last property of Proposition~\ref{prop.Oreverse} which states that the inverse of the matrix $R_n$ is itself. Equivalently, we want to show that the square of $R_n$ is the identity matrix of order $n$. Recall that the matrix $R_n$ is upper triangular, $r_{i,j}^n = 0$ if $i>j$. By convention, we set $r_{i,j}^n = 0$ if $i,j \notin [n]$.

The absolute value of the entries in the first row of $R_n$ form a triangular array $[r_{1,j}^n]_{1 \leq j \leq n}$; see Table~\ref{tab.r1jn} for $n=7$. The reader can verify the three-term recurrence relation (Proposition~\ref{prop.Oreverse} (3)) for the first few terms from this table. Now, we introduce a new square array $T=[t_{i,j}]_{i,j \geq 1}$ as follows.
\begin{definition}\label{def.array}
  Let $T=[t_{i,j}]_{i,j \geq 1}$ be a square array with entries satisfying a three-term recurrence relation $t_{i,j} = t_{i-1,j-1} + t_{i-1,j} + t_{i,j-1}$ for $i,j \geq 2$, and with initial values $t_{i,1} = 1$ and $t_{1,j} = g_j$ for $i,j \geq 1$, where the generating function of the sequence $(g_j)_{j \geq 1}$ is given by
\begin{equation}\label{eq.gfseq}
  G(x) = \sum_{j \geq 1}g_j x^{j-1} = \frac{(1+x)(1-2x-x^2)}{(1-x)(1+2x-x^2)}.
\end{equation}
\end{definition}
By convention, we set $t_{i,j} = 0$ if $i<1$ or $j<1$. Table~\ref{tab.square} displays the square array $T$ for $1 \leq i,j \leq 7$. The reader might notice a surprising fact that the lower triangular part of these two arrays is equal in Tables~\ref{tab.r1jn} and \ref{tab.square}, we will show in the following lemma that this holds in general.
\begin{table}[!htb]
    \begin{minipage}{0.5\linewidth}
      \centering
        \begin{tabular}{c|cccccccc}
          \diagbox{$n$}{$j$} & 1 & 2 & 3 & 4 & 5 & 6 & 7 \\
          \hline
          1 & 1 &  &  &  &  &  &  \\
          2 & 1 & 0 &  &  &  &  &  \\
          3 & 1 & 2 & 2 &  &  &  &  \\
          4 & 1 & 4 & 8 & 4 &  &  &  \\
          5 & 1 & 6 & 18 & 30 & 18 &  &  \\
          6 & 1 & 8 & 32 & 80 & 128 & 72 &  \\
          7 & 1 & 10 & 50 & 162 & 370 & 570 & 322 \\
        \end{tabular}
        \vspace{3mm}
        \caption{The triangular array $[r_{1,j}^n]_{1\leq j \leq n}$ up to $n=7$.}\label{tab.r1jn}
    \end{minipage}%
    \begin{minipage}{0.5\linewidth}
      \centering
        \begin{tabular}{c|cccccccc}
          \diagbox{$i$}{$j$} & 1 & 2 & 3 & 4 & 5 & 6 & 7 \\
          \hline
          1 & 1 & -2 & 2 & -10 & 18 & -50 & 114 \\
          2 & 1 & 0 & 0 & -8 & 0 & -32 & 32 \\
          3 & 1 & 2 & 2 & -6 & -14 & -46 & -46 \\
          4 & 1 & 4 & 8 & 4 & -16 & -76 & -168 \\
          5 & 1 & 6 & 18 & 30 & 18 & -74 & -318 \\
          6 & 1 & 8 & 32 & 80 & 128 & 72 &  -320\\
          7 & 1 & 10 & 50 & 162 & 370 & 570 & 322 \\
        \end{tabular}
        \vspace{3mm}
        \caption{The square array $T=[t_{i,j}]_{1 \leq i,j \leq 7}$.}\label{tab.square}
    \end{minipage}
\end{table}

\begin{lemma}\label{lemma.coincide}
  The lower triangular part of the square array $[t_{n,j}]_{1 \leq j \leq n}$ defined in Definition \ref{def.array} coincides with the triangular array $[r_{1,j}^n]_{1 \leq j \leq n}$, for all $n$.
\end{lemma}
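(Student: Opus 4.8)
The plan is to show that both arrays satisfy the same three-term recurrence in the region below the diagonal and have matching boundary data along the edges that feed that recurrence. Concretely, fix the recurrence $u_{i,j} = u_{i-1,j-1} + u_{i-1,j} + u_{i,j-1}$, which both $t_{i,j}$ (for $i,j\ge 2$, by Definition~\ref{def.array}) and $r^{n}_{1,j}$ satisfy in a suitable sense: Proposition~\ref{prop.Oreverse}(3) gives $r^{n}_{1,j} = r^{n}_{1,j-1} + r^{n-1}_{1,j} + r^{n-1}_{1,j-1}$, and if I set $s_{n,j} := r^{n}_{1,j}$ then this reads $s_{n,j} = s_{n,j-1} + s_{n-1,j} + s_{n-1,j-1}$, i.e.\ exactly $u$'s recurrence with the roles $(i,j)\mapsto(n,j)$. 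So I would prove by induction on $n$ (equivalently, on $i$ for $T$) that $t_{n,j} = r^{n}_{1,j}$ for all $1\le j\le n$. The shared recurrence lets me propagate from row $n-1$ to row $n$ provided I control (a) the first column $j=1$, and (b) the near-diagonal entry $j=n$, since the recurrence $s_{n,j}=s_{n,j-1}+s_{n-1,j}+s_{n-1,j-1}$ needs $s_{n,j-1}$ (already available once $j-1\ge1$ is known) together with row $n-1$ entries up to column $j$, and for $j=n$ the term $s_{n-1,n}$ is $0$ since row $n-1$ only has entries up to column $n-1$ (consistent with $t_{n-1,n}$ lying strictly above the diagonal, but note $t_{n-1,n}\ne 0$ in general — see below).

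The base cases and boundary data are the first things to pin down. For the first column: $r^{n}_{1,1}=1$ by Proposition~\ref{prop.Oreverse}(1), and $t_{n,1}=1$ by definition, so column $j=1$ agrees for all $n$. For the near-diagonal entries $r^{n}_{1,n}$, translation invariance (Proposition~\ref{prop.Oreverse}(2)) gives $r^{n}_{1,n}=r^{n}_{n-1,?}$-type reindexings, but more usefully $r^{n}_{1,n}$ corresponds to $Q_{V^{*}}(u_n,w^{n}_{1})$ after shifting, which is pinned down by Lemma~\ref{lemma.w1}: $Q_{V^{*}}(u_{n},w^n_1) = \tfrac12 Q_{V^{*}}(u_{n-1},u_n) + (-1)^{n-1}$, and $Q_{V^{*}}(u_{n-1},u_n) = a_{n-1,n}$ is governed by the recurrence~\eqref{eq.rec}. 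I would check that the value so produced matches the value $t_{n,n}$ forced by the $T$-recurrence together with the already-established row $n-1$. The subtlety: the $T$-recurrence at $(n,n)$ uses $t_{n-1,n}$, which is NOT determined by the ``lower triangular part'' hypothesis — it is a genuine above-diagonal entry of $T$ (e.g.\ $t_{1,2}=-2$, $t_{2,3}=0$, $t_{3,4}=-6$ in Table~\ref{tab.square}). So I cannot just run the recurrence; I must instead verify the near-diagonal identity $t_{n,n}=r^{n}_{1,n}$ independently (via Lemma~\ref{lemma.w1} and the generating function~\eqref{eq.gfseq}), and then run the recurrence only for $2\le j\le n-1$, where all needed inputs lie in the already-known lower-triangular region of row $n-1$ plus the already-known entry $s_{n,j-1}$.

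Thus the induction step becomes: assuming $t_{n-1,j}=r^{n-1}_{1,j}$ for $1\le j\le n-1$, I establish $t_{n,1}=r^{n}_{1,1}$ (both $1$), then for $j=2,\dots,n-1$ I invoke the common recurrence $t_{n,j}=t_{n,j-1}+t_{n-1,j}+t_{n-1,j-1}$ versus $r^{n}_{1,j}=r^{n}_{1,j-1}+r^{n-1}_{1,j}+r^{n-1}_{1,j-1}$ (Proposition~\ref{prop.Oreverse}(3)) to propagate equality, and finally I handle $j=n$ separately by comparing the closed forms from Lemma~\ref{lemma.w1} and recurrence~\eqref{eq.rec} against the value of $t_{n,n}$ extracted from Definition~\ref{def.array}. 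The main obstacle I anticipate is precisely this last near-diagonal comparison: extracting a usable formula for $t_{n,n}$ from the generating function $G(x)=\frac{(1+x)(1-2x-x^2)}{(1-x)(1+2x-x^2)}$ (the denominator $1+2x-x^2$ signals the Pell-type recurrence, matching the sequence $(f_n)$ of~\eqref{eq.pell}) and reconciling it with $\tfrac12 a_{n-1,n}+(-1)^{n-1}$ from~\eqref{eq.rec} will require a short generating-function or explicit-Pell-number calculation; everything else is bookkeeping with the recurrences already proved. Once Lemma~\ref{lemma.coincide} is in hand, the entries $r^{n}_{1,j}$ inherit the clean description of $t_{n,j}$, which is what the subsequent proof of Proposition~\ref{prop.Oreverse}(4) (involution) will exploit.
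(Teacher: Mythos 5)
Your proposal is correct and follows essentially the same route as the paper: both arrays satisfy the identical three-term recurrence in the strictly lower-triangular region, the first columns agree trivially, and the only substantive work is the independent verification that the diagonal entries coincide, which the paper carries out exactly as you anticipate — computing the generating function of $(t_{n,n})_{n\ge1}$ by diagonal extraction from the bivariate generating function of $T$, and matching it against the generating function of $r^n_{1,n}=\tfrac12 Q_{V^*}(u_{n-1},u_n)+(-1)^{n-1}$ obtained from Lemma~\ref{lemma.w1} together with the Schr\"oder-number formula for $Q_{V^*}(u_{n-1},u_n)$. Your observation that the recurrence cannot be run at $j=n$ because $t_{n-1,n}$ lies above the diagonal is precisely the point that forces the separate diagonal comparison, and is handled in the paper in just this way.
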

\begin{proof}
  Since these two arrays satisfy the same three-term recurrence relation and their first columns are equal ($r^n_{1,1}=1$ by Proposition~\ref{prop.Oreverse} (1)), it suffices to show that their diagonal entries coincide. We will compute the generating function of these diagonal entries.

First, let $T(x,y) = \sum_{i \geq 1} \sum_{j \geq 1}t_{i,j}x^{i-1}y^{j-1}$ denote the bivariate generating function of the square array $T$. It is not hard to see that
\begin{align*}
   T(x,y) & = \sum_{i \geq 2} \sum_{j \geq 2} t_{i,j}x^{i-1}y^{j-1} + \sum_{i \geq 1} t_{i,1}x^{i-1} + \sum_{j \geq 1} t_{1,j} y^{j-1} - t_{1,1} \\
          & = \sum_{i \geq 2} \sum_{j \geq 2} \left( t_{i-1,j} + t_{i,j-1} + t_{i-1,j-1} \right)x^{i-1}y^{i-1} + \frac{1}{1-x} + G(y) -1 \\ 
          & = (x+y+xy) T(x,y) - \frac{x}{1-x} - yG(y) + \frac{1}{1-x} + G(y) -1,
\end{align*}
where $G(y)$ is the generating function given in \eqref{eq.gfseq}. As a consequence, we have
\begin{equation}\label{eq.gfT}
  T(x,y) = \frac{(1-y)G(y)}{1-x-y-xy}.
\end{equation}

The method of extracting the diagonal terms from a multivariate generating function can be found for instance in \cite[Chapter 6.3]{ECII}. Viewing $T(s,z/s) =  \sum_{i \geq 1} \sum_{j \geq 1}t_{i,j}s^{i-j}z^{j-1}$ as a Laurent series in $s$, then the generating function of the diagonal terms of $T$ is given by the constant term of $T(s,z/s)$. By Cauchy's integral formula and the residue theorem, we have
\begin{equation}\label{eq.gfdiagLaurent}
  \tilde{T}(z) = \sum_{n \geq 1} t_{n,n}z^{n-1} = [s^0] T \left( s,z/s \right) = \frac{1}{2 \pi i} \int_{|s|=r} T\left(s,z/s\right) \frac{ds}{s} = \sum_{s_0 = s(z)} \mathsf{Res}_{s_0} \frac{1}{s} T \left( s,z/s \right),
\end{equation}
where the sum ranges over all singularities $s(z)$ of $\frac{1}{s} T(s,z/s)$ inside a circle $|s|=r$, and they satisfy $\lim_{z \rightarrow 0} s(z)=0$. In Table \ref{tab.residue}, we list the singularities mentioned above and calculate their residues.
\begin{table}[htb!]
\centering
  \begin{tabular}{c|c|c|c|c}
    $i$       & $1$ & $2$ & $3$ & $4$ \\
    \hline & & & & \\[-4mm]
    Singularities $s_i$ & $0$ & $-z-\sqrt{2}z$ & $-z+\sqrt{2}z$ & $\displaystyle \frac{1}{2}\left( 1-z-\sqrt{1-6z+z^2} \right)$ \\
    \hline & & & & \\[-4mm]
    $\displaystyle \mathsf{Res}_{s_i} \frac{1}{s} T \left( s,z/s \right)$ & $-1$ & $\displaystyle \frac{2-\sqrt{2}}{1+z}$ & $\displaystyle \frac{2+\sqrt{2}}{1+z}$  & $\displaystyle \frac{-3+z-\sqrt{1-6z+z^2}}{2(1+z)}$  \\
  \end{tabular}
  \vspace{0.3cm}
  \caption{The singularities of $\frac{1}{s} T(s,z/s)$ satisfying $\lim_{z \rightarrow 0} s(z)=0$ and their residues.}\label{tab.residue}
\end{table}

Therefore, we have the generating function of the diagonal terms of the square array $T$.
\begin{equation}\label{eq.gfdiagT}
  \tilde{T}(z) = \sum_{i=1}^{4} \mathsf{Res}_{s_i} \frac{1}{s} T \left( s,z/s \right) = \frac{3-z-\sqrt{1-6z+z^2}}{2(1+z)}.
\end{equation}

Second, by Lemma~\ref{lemma.w1}, we know that $r_{1,n}^n = \frac{1}{2}Q_{V^{*}}(u_{n-1},u_n) + (-1)^{n-1}$, for $n \geq 2$. The following explicit expression of $Q_{V^{*}}(u_{n-1},u_n)$ is presented in the previous work \cite[Corollary~5]{Lee23} (take $i=n-1$ and $j=n$).
\begin{equation}\label{eq.expdiagr}
  \frac{1}{2}Q_{V^{*}}(u_{n-1},u_n) = \sum_{\ell = 1}^{n-1}(-1)^{\ell-1}\mathsf{Sch}_{n-1-\ell},
\end{equation}
where $\mathsf{Sch}_{p}$ is the $p$th large Schr{\"o}der number \cite[A006318]{OEIS}. Recall that the generating function of the large Schr{\"o}der numbers is $\mathsf{Sch}(z) = \sum_{n \geq 0} \mathsf{Sch}_n z^n = \left( 1-z-\sqrt{1-6z+z^2} \right)/(2z)$.

From the above discussion, we obtain the generating function of the entries $(r_{1,n}^{n})_{n \geq 1}$.
\begin{align}
  \sum_{n \geq 1}r_{1,n}^{n} z^{n-1} &= 1+ \sum_{n \geq 2}\frac{1}{2}Q_{V^{*}}(u_{n-1},u_n)z^{n-1} + \sum_{n \geq 2}(-1)^{n-1}z^{n-1} \nonumber \\
        & = 1+ \sum_{n \geq 2} \left( \sum_{\ell = 1}^{n-1}(-1)^{\ell-1}\mathsf{Sch}_{n-1-\ell} \right)z^{n-1} + \sum_{n \geq 2} (-1)^{n-1}z^{n-1} \nonumber \\
   & = 1 + \mathsf{Sch}(z) \frac{1}{1+z} + \frac{-z}{1+z}  = \frac{3-z-\sqrt{1-6z+z^2}}{2(1+z)}, \label{eq.gfdiagR}
\end{align}
where the third equality follows from the convolution of two sequences $(\mathsf{Sch}_n)_{n \geq 1}$ and $\left( (-1)^{n} \right)_{n\geq 1}$. Two generating functions \eqref{eq.gfdiagT} and \eqref{eq.gfdiagR} coincide. This completes the proof of Lemma~\ref{lemma.coincide}.
\end{proof}

We are now ready to prove the last property of Proposition~\ref{prop.Oreverse} which states that the inverse of the matrix $R_n$ is itself.

\begin{proof}[Proof of Proposition \ref{prop.Oreverse} (4)]
  By the direct matrix multiplication, the $(i,j)$-entry of $(R_n)^2$ is given by
    \begin{equation}\label{eq.cij}
      c_{i,j} = \sum_{k=1}^{n} (-1)^{k+j} r^n_{i,k} r^n_{k,j}.
    \end{equation}
  Thanks to the translation invariant property of $r_{i,j}^n$ (Proposition \ref{prop.Oreverse} (2)) and the convention stated in the first paragraph of this subsection, we may rewrite \eqref{eq.cij} as follows.
    \begin{equation*}
      c_{i,j}  = \sum_{k=1}^{n} (-1)^{k+j} r^n_{i+1,k+1} r^n_{k+1,j+1} = \sum_{k=2}^{n+1} (-1)^{k-1+j} r^n_{i+1,k} r^n_{k,j+1} = \sum_{k=1}^{n} (-1)^{k+1+j} r^n_{i+1,k} r^n_{k,j+1} = c_{i+1,j+1}.
    \end{equation*}
  Therefore, instead of showing $c_{i,j} = \delta_{i,j}$, it is enough to show that
    \begin{equation}\label{eq.cijreduce}
      c_{1,j} = \sum_{k=1}^{n} (-1)^{k+j} r^n_{1,k} r^n_{k,j} = \sum_{k=1}^{j} (-1)^{k+j} r^n_{1,k} r^n_{1,j-(k-1)} = \delta_{1,j}, \text{ for $j=1,\dotsc,n$.}
    \end{equation}
Note that we are able to reduce the number of terms in \eqref{eq.cijreduce} since $r_{k,j}^n=0$ when the index $k>j$.

  Given a positive integer $n$, let $T=[t_{i,j}]$ be the array defined in Definition~\ref{def.array}. We will prove a more general identity below:
    \begin{equation}\label{eq.invo1}
      \sum_{k=1}^{j} (-1)^{k+j} t_{n,k}t_{n,j+1-k} = \delta_{1,j}, \text{ for all $j \geq 1$.}
    \end{equation}
Due to Lemma~\ref{lemma.coincide}, $t_{n,j} = r_{1,j}^n$ when $j=1,\dotsc,n$. Then \eqref{eq.cijreduce} follows immediately from \eqref{eq.invo1}. The proof will be complete if we show that \eqref{eq.invo1} holds. Equivalently, we will show the following identity holds.
\begin{equation}\label{eq.invo2}
  \sum_{j \geq 1} \left( \sum_{k \geq1} (-1)^{k-1} t_{n,k}t_{n,j+1-k} \right) y^{j-1} = 1.
\end{equation}

Let $T_n(y) = \sum_{j \geq 1} t_{n,j}y^{j-1}$ be the generating function of the entries in the $n$th row of $T$. Notice that $T_n(y)$ is the coefficient of $x^{n-1}$ of $T(x,y)$, a simple calculation from \eqref{eq.gfT} gives
\begin{equation}\label{eq.gfTn}
  T_n(y) = \left( \frac{1+y}{1-y}\right)^{n-1}G(y).
\end{equation}

To prove \eqref{eq.invo2}, we interchange the sum in \eqref{eq.invo2}, simplify the sum by observing that it is the convolution of two sequences $(t_{n,k})_{k \geq 1}$ and $((-1)^{k-1}t_{n,k})_{k \geq 1}$. Using \eqref{eq.gfTn}, we finally obtain
\begin{align*}
  \sum_{j \geq 1} \sum_{k \geq1} (-1)^{k-1} t_{n,k}t_{n,j+1-k} y^{j-1} & = \sum_{k \geq 1} \sum_{j \geq1} (-1)^{k-1} t_{n,k}t_{n,j+1-k} y^{j-1} = \sum_{k \geq 1} \left( (-1)^{k-1} t_{n,k} \sum_{j \geq1} t_{n,j+1-k} y^{j-1} \right)\\
   & = \sum_{k \geq 1} \left( (-1)^{k-1} t_{n,k} y^{k-1} \sum_{j \geq1} t_{n,j+1-k} y^{j-k} \right) = T_n(-y) T_n(y) = 1.
\end{align*}
This completes the proof of Proposition \ref{prop.Oreverse} (4).
\end{proof}

\subsection{The symmetry property}\label{sec.symmetry}

We are now prepared to prove our first main theorem (Theorem~\ref{thm.main1}) which states the symmetry property for off-diagonally symmetric domino tilings of an odd-order Aztec diamond: $|O(2n-1;k)| = |O(2n-1;2n-k)|$, for $k=1,2,\dotsc,n$.

\begin{proof}[Proof of Theorem \ref{thm.main1}]
Given a positive integer $n$, let $v_k = |O(2n-1,k)| - |O(2n-1,2n-k)|$ for $k=1,\dotsc,2n-1$ (clearly, $v_k = -v_{2n-k}$). Define two column vectors
\begin{equation*}
  \mathbf{v}=(v_1,\dotsc,v_{n-1})^{\intercal} \text{ and } \mathbf{\overline{v}}=(v_1,\dotsc,v_{n-1},0,-v_{n-1},\dotsc,-v_1)^{\intercal}.
\end{equation*}
It suffices to show that $\mathbf{v}$ is the zero vector.

Let $J_{2n-1}$ be the matrix of order ${2n-1}$ whose anti-diagonal entries are $1$, and other entries are $0$. Note that matrices $R_{2n-1}$ (given in Proposition~\ref{prop.Oreverse}) and $J_{2n-1}$ both reverse the order of elements of the column vector $\mathbf{O_{2n-1}}$. Since the matrix $R_{2n-1}$ is involutory (Proposition~\ref{prop.Oreverse} (4)), we have $R_{2n-1}\mathbf{O}^{\mathsf{rev}}_{2n-1} = \mathbf{O}_{2n-1}$, and therefore
\begin{equation}\label{eq.pfsym1}
  (R_{2n-1}J_{2n-1} - R_{2n-1})\mathbf{O}_{2n-1} = \mathbf{O}_{2n-1} - \mathbf{O}^{\mathsf{rev}}_{2n-1} = \mathbf{\overline{v}}.
\end{equation}
Let $x_{i,j}^{2n-1}$ be the $(i,j)$-entry of the matrix $R_{2n-1}J_{2n-1} - R_{2n-1}$. It is easy to check that this matrix is symmetric (ignore the signs) about the vertical central line, that is, $x_{i,j}^{2n-1} = -x_{i,2n-j}^{2n-1}$, for $j=1,\dotsc,n-1$, and the $n$th column is zero.

Due to the symmetry mentioned above, \eqref{eq.pfsym1} can be rewritten as
\begin{equation}\label{eq.pfsym2}
  X_{2n-1} \mathbf{v} = \mathbf{\overline{v}},
\end{equation}
where $X_{2n-1}=[x_{i,j}^{2n-1}]_{1 \leq i \leq 2n-1,1 \leq j \leq n-1}$ is the $(2n-1)\times (n-1)$ matrix. It is equivalent to
\begin{equation}\label{eq.pfsym3}
  (X_{2n-1} - Y_{2n-1})\mathbf{v} = \mathbf{0},
\end{equation}
where $Y_{2n-1} = \begin{bmatrix}
                    I_{n-1} \\
                    0 \\
                    -J_{n-1}
                  \end{bmatrix}$ ($I_{n-1}$ is the identity matrix of order $n-1$).

Take $n=3$ as an example, the entries of $R_5$ can be found in the fifth row of Table \ref{tab.r1jn}.
\begin{equation*}
  R_{5}J_{5} - R_{5} = \begin{bmatrix}
                         18 & -30 & 18 & -6 & 1 \\
                         30 & -18 & 6  & -1 & 0 \\
                         18 & -6  & 1  & 0  & 0 \\
                         6  & -1  & 0  & 0  & 0 \\
                         1  &  0  & 0  & 0  & 0
                       \end{bmatrix} -
                        \begin{bmatrix}
                         1 & -6 & 18 & -30 & 18 \\
                         0 & -1 & 6  & -18 & 30 \\
                         0 & 0  & 1  & -6  & 18 \\
                         0 & 0  & 0  & -1  & 6 \\
                         0 & 0  & 0  & 0   & 1
                       \end{bmatrix} =
                       \begin{bmatrix}
                         17 & -24 & 0 & 24 & -17 \\
                         30 & -17 & 0 & 17 & -30 \\
                         18 & -6  & 0 & 6  & -18 \\
                         6  & -1  & 0 & 1  & -6 \\
                         1  & 0   & 0 & 0  & -1
                       \end{bmatrix},
\end{equation*}
and this can be rewritten as
\begin{equation*}
    (R_{5}J_{5} - R_{5})\mathbf{O}_5 =
  \begin{bmatrix}
     17 & -24 \\
     30 & -17 \\
     18 & -6  \\
     6  & -1  \\
     1  & 0
  \end{bmatrix}
\begin{bmatrix}
  v_1 \\
  v_2
\end{bmatrix} =
\begin{bmatrix}
   v_1  \\
   v_2  \\
     0  \\
  -v_2  \\
  -v_1
\end{bmatrix} \text{, \quad}
\left(
\begin{bmatrix}
     17 & -24 \\
     30 & -17 \\
     18 & -6  \\
     6  & -1  \\
     1  & 0
  \end{bmatrix} -
\begin{bmatrix}
     1 & 0 \\
     0 & 1 \\
     0 & 0 \\
     0 & -1 \\
    -1 & 0
  \end{bmatrix}
\right)
\begin{bmatrix}
  v_1 \\
  v_2
\end{bmatrix} =
\begin{bmatrix}
  0 \\
  0
\end{bmatrix}.
\end{equation*}

Our proof will be complete once we prove the following (which will imply that $\mathbf{v}$ is the zero vector).

\noindent \textbf{Claim.} The matrix $X_{2n-1}-Y_{2n-1}$ is full rank, that is, $\mathsf{rank}(X_{2n-1}-Y_{2n-1}) = n-1$.
\begin{proof}[Proof of Claim]
According to Proposition \ref{prop.Oreverse} (1) and the way we constructed the matrix $X_{2n-1}$, one can see that the $(2n-j,j)$-entry of $X_{2n-1} - Y_{2n-1}$ is $2$ if $j$ is odd and $0$ if $j$ is even; also, the $(i,j)$-entry of $X_{2n-1} - Y_{2n-1}$ is $0$ if $n+1 \leq i \leq 2n-1$ and $j>2n-i$. This implies that the non-zero entries of $X_{2n-1}-Y_{2n-1}$ form a staircase shape, we illustrate the first few columns below.
\begin{equation*}
    \begin{bmatrix}
  \vdots & \vdots & \vdots & \vdots & \vdots & \\
   * & * & * &  * & * & \cdots\\
   * &  * & * & -a_3 & a_2 & \cdots\\
   * & * & *  & -a_2 & a_1 & \cdots\\
   * & -a_3 & a_2 & -a_1 & 2 & \cdots\\
   a_3 & -a_2 & a_1 & 0 & 0 & \cdots\\
   a_2 & -a_1 & 2 & 0 & 0 & \cdots\\
   a_1 &  0  & 0 & 0 & 0 & \cdots\\
   2 & 0 & 0 & 0 & 0 & \cdots
\end{bmatrix},
\end{equation*}
where $*$ is a non-zero entry, $a_i=r^{2n-1}_{1,i+1}$ (first few entries are given in the $(2n-1)$th row in Table \ref{tab.r1jn}).

In the matrix $X_{2n-1}-Y_{2n-1}$, one can see from Proposition \ref{prop.Oreverse} (2) that the first three non-zero entries (counted from the bottom) of columns indexed by $2j$ and $2j+1$ are ``almost identical'', where $j=1,2,\dotsc,\lfloor \frac{n-2}{2} \rfloor$. We should point out that the only difference occurs when $n$ is even, the ``$a_2$'' entry of the last column is actually given by $a_2-2$. It suffices to show that two columns indexed by $2j$ and $2j+1$ are independent, and thus the matrix $X_{2n-1}-Y_{2n-1}$ has $n-1$ independent columns. 

It is not hard to solve the three-term recurrence relation of $r^{n}_{1,j}$ (Proposition \ref{prop.Oreverse} (3)) for $j=2,3$ and $4$, we then obtain the following formulas.
\begin{equation*}
    r^{n}_{1,2}=2(n-1), \quad r^{n}_{1,3}=2(n-1)^2 \text{\quad and \quad} r^{n}_{1,4}=\frac{2}{3}(2n^3-12n^2+25n-30).
\end{equation*}
Once a positive integer $n$ is given, it is clear from the formulas that 
\begin{equation*}
    \frac{-a_1}{2} = \frac{-2(2n-3)}{2} = -(2n-3) = \frac{-a_2}{a_1}.
\end{equation*}
However, one can check that $\frac{-a_1}{2} \neq \frac{-a_3}{a_2}$ and $\frac{-a_1}{2} \neq \frac{-a_3}{a_2-2}$ in general. As a result, two columns indexed by $2j$ and $2j+1$ of $X_{2n-1} - Y_{2n-1}$ are independent, for $j=1,2,\dotsc,\lfloor \frac{n-2}{2} \rfloor$. This completes the proof of Theorem \ref{thm.main1}.
\end{proof}
\phantom{\qedhere}
\end{proof}
\begin{remark}
  The reader might notice some alternative ways to prove Theorem~\ref{thm.main1}. One way is to apply Theorem~\ref{thm.OffPfaffian} by showing that the Pfaffian of $A_I$ when $I=[2n-1] \setminus \{k\}$ and $I=[2n-1] \setminus \{2n-k\}$ are equal. However, evaluating $\pf(A_I)$ directly seems challenging, and no progress has been made so far. Another approach is to find a bijection between domino tilings of these two sets $|O(2n-1;k)|$ and $|O(2n-1;2n-k)|$ directly. We leave these directions to be pursued by the interested reader.
\end{remark}

\section{Nearly off-diagonally symmetric domino tilings of the Aztec diamond}\label{sec.nearly}

In this section, we will study nearly off-diagonally symmetric domino tilings of the Aztec diamond of order $2n-1$ (with no boundary defect), where the $2n-2$ cells on the vertical diagonal are assigned $0$ while only one cell is assigned either $-1$ or $1$. In Section~\ref{sec.pfmain2}, we prove our second main result (Theorem~\ref{thm.main2}). The third main result (Theorem~\ref{thm.main3}) will be proved in Section~\ref{sec.pfmain3}. Interesting identities (Corollary \ref{cor.identity}) involving the numbers of off-diagonally symmetric domino tilings that we considered in this paper are shown in Section~\ref{sec.misid}.

\subsection{Proof of Theorem \ref{thm.main2}}\label{sec.pfmain2}

We remind the reader that $D^{-}(2n-1;k)$ (resp., $D^{+}(2n-1;k)$) is the set of nearly off-diagonally symmetric domino tilings of $AD(2n-1)$ in which the $k$th cell (from bottom to top) on the vertical diagonal is assigned $-1$ (resp., $1$). $D^{\pm}(2n-1;k) = D^{-}(2n-1;k) \cup D^{+}(2n-1;k)$, and $D(2n-1) = \bigcup_{k=1}^{2n-1} D^{\pm}(2n-1;k)$.

On the graph $\mathcal{\overline{D}}(n)$, it is easy to see that the number of paths going from $x_i$ to $v_{\ell}$ is given by the Delannoy number
\begin{equation}\label{eq.pxv}
  |\mathscr{P}(x_i,v_{\ell})| = \begin{cases}
                d_{i-j,j-1}, & \text{if $\ell=2j$,} \\
                d_{i-j,j-2}, & \text{if $\ell=2j-1$,}
            \end{cases}
\end{equation}
for $1 \leq i \leq n$ and $1 \leq \ell \leq 2n$.
On the graph $\mathcal{D}(n)$, using \eqref{eq.partitionu}, \eqref{eq.pxv} and the recurrence relation of Delannoy numbers, we obtain
\begin{align}
  |\mathscr{P}(u_i,v_{2j})| + |\mathscr{P}(u_i,v_{2j-1})| &= 2 d_{i-j,j-1},    \label{eq.puv+} \\
  |\mathscr{P}(u_i,v_{2j})| - |\mathscr{P}(u_i,v_{2j-1})| &= 2 d_{i-j-1,j-1},  \label{eq.puv-}
\end{align}
  for $1 \leq i,j \leq n$.

Now, we are ready to prove our second main result (Theorem~\ref{thm.main2}), which provides the Pfaffian expression of $D(2n-1)$.

\begin{proof}[Proof of Theorem \ref{thm.main2}]
  Consider the graph $\mathcal{D}(2n-1)$, by Lemma~\ref{lemma.cell}, we have 
  \begin{equation}\label{eq.main2proof-0}
      |D^{\pm}(2n-1;k)| = |\mathscr{P}_0(U,\{v_{2k}\} \cup V^{*})| + |\mathscr{P}_0(U,\{v_{2k-1}\} \cup V^{*})|,
  \end{equation}
  where $U=\{u_1,\dotsc,u_{2n-1}\}$ is the set of fixed starting points, $V^{*}=\{v_1^{*},\dotsc,v_{2n-1}^{*}\}$ is the set of doublets and $\overline{V} = \{\overline{v}\}$ ($\overline{v} = v_{2k}$ or $v_{2k-1}$) is the only fixed ending point of a path.

 To obtain the number of these non-intersecting paths, we apply Lemma \ref{lemma.mod2} with the graph $\mathcal{G} = \mathcal{D}(2n-1)$ and the sets of points described above ($|\overline{V}|=m=1$). We note that the sets $U$ and $\overline{V} \cup V^{*}$ are not compatible in our case. As a result, for $\overline{v} = v_{2k}$ or $v_{2k-1}$, we have
\begin{equation}\label{eq.main2proof-1}
   \sum_{\pi \in \mathfrak{S}_{2n-1}} \sgn(\pi) |\mathscr{P}_{0}^{\pi}(U,\{\overline{v}\} \cup V^{*})| = \pf
   \begin{bmatrix}
      Q^{*} & H(\overline{v}) \\
      -H(\overline{v})^{\intercal} & 0
      \end{bmatrix},
\end{equation}
where $Q^{*}$ is the matrix $A_{[2n-1]}$ given in \eqref{eq.rec} and $H(\overline{v})=[|\mathscr{P}(u_i,\overline{v})|]_{i=1}^{2n-1}$ is the column vector.

We remind the reader that the set $\mathscr{P}_0^{\pi}(U,\{\overline{v}\} \cup V^{*})$ consists of $(2n-1)$-tuples of non-intersecting paths  $(p_1,\dotsc,p_{2n-1})$ with the connection type $\pi$, where
\begin{itemize}
  \item $p_1$ connects $u_{\pi(1)}$ with the fixed ending point $\overline{v}$, and
  \item for $i=2,\dotsc,2n-1$, $p_i$ connects $u_{\pi(i)}$ with some doublet in $V^{*}$ (two elements in a doublet are both the ending points of paths or neither the ending points of paths).
\end{itemize}
Due to the condition that the paths $p_2,\dotsc,p_{2n-1}$ end at some doublets, it turns out that all the possible connection types of non-intersecting paths are of the form written in the following two-line notation:
\begin{equation}\label{eq.pinearlyoff}
  \pi = \begin{pmatrix}
          1 & 2 & 3  & \cdots & 2\ell-2 & 2\ell-1 & 2\ell & 2\ell+1 & \cdots & 2n-1 \\
          2\ell-1 & 1 & 2 & \cdots & 2\ell -3 & 2\ell-2 & 2\ell & 2\ell+1 & \cdots & 2n-1
        \end{pmatrix},
\end{equation}
for $\ell = 1,\dotsc,n$. It is easy to see that $\pi$ is always an even permutation, that is, $\sgn(\pi) = 1$.

Therefore, the left-hand side of \eqref{eq.main2proof-1} gives the correct total number of non-intersecting paths:
\begin{equation}\label{eq.main2proof-2}
  \sum_{\pi \in \mathfrak{S}_{2n-1}} \sgn(\pi) |\mathscr{P}_{0}^{\pi}(U,\{\overline{v}\} \cup V^{*})| = |\mathscr{P}_{0}(U,\{\overline{v}\} \cup V^{*})| = \pf
   \begin{bmatrix}
      Q^{*} & H(\overline{v}) \\
      -H(\overline{v})^{\intercal} & 0
      \end{bmatrix}.
\end{equation}
By \eqref{eq.main2proof-0}, \eqref{eq.main2proof-2} and the linearity of Pfaffians, we have
\begin{align}
    |D(2n-1)| & = \sum_{k=1}^{2n-1} |D^{\pm}(2n-1;k)|  = \sum_{k=1}^{4n-2} |\mathscr{P}_{0}(U,\{v_{k}\} \cup V^{*})| \nonumber \\
              & = \sum_{k=1}^{4n-2} \pf \begin{bmatrix}
                      Q^{*} & H(v_k)  \\
                      -H(v_k)^{\intercal} & 0
                  \end{bmatrix}
                = \pf \begin{bmatrix}
                      Q^{*} & H \\
                      -H^{\intercal} & 0
                  \end{bmatrix}, \label{eq.main2proof-3}
\end{align}
where the column vector $H = \sum_{k=1}^{4n-2} H(v_k) = [\sum_{k=1}^{4n-2} |\mathscr{P}(u_i,v_k)|]_{i=1}^{2n-1}$.

By \eqref{eq.puv+}, the $i$th component of $H$ can be expressed as
\begin{equation}\label{eq.main2proof-4}
  h_i = \sum_{k=1}^{4n-2} |\mathscr{P}(u_i,v_k)| = \sum_{j=1}^{2n-1} 2 d_{i-j,j-1}.
\end{equation}
In fact, $h_i$ is twice the sum of the $i$th anti-diagonal entries of the square array of Delannoy numbers $[d_{p,q}]_{p,q \geq 0}$. It is well-known that $(h_i)_{i \geq 1}$ is exactly twice the Pell numbers (see for example \cite[A008288]{OEIS}), which is the sequence $(f_i)_{i \geq 1}$ given in \eqref{eq.pell}. One can also check this fact by showing they satisfy the same recurrence relation and initial values, we leave this straightforward computation to the interested reader.

Therefore, $h_i = f_i$ for all $i$. The matrix in \eqref{eq.main2proof-3} coincides with the matrix $B_{2n}$ given in \eqref{eq.matrixB}. This completes the proof of Theorem~\ref{thm.main2}.
\end{proof}

\subsection{Proof of Theorem~\ref{thm.main3}}\label{sec.pfmain3}

We now combine the ideas of proving Proposition~\ref{prop.Oreverse} and Theorem~\ref{thm.main2}, the proof of our third main result (Theorem \ref{thm.main3}) is presented below.

\begin{proof}[Proof of Theorem~\ref{thm.main3}]
  By Lemma~\ref{lemma.cell} and the proof of Theorem~\ref{thm.main2}, we have the following expression. For $k=1,\dotsc,2n-1$,
\begin{align}
  |D^{\pm}(2n-1;k)| & = |\mathscr{P}_0(U,\{v_{2k}\} \cup V^{*})| + |\mathscr{P}_0(U,\{v_{2k-1}\} \cup V^{*})| \nonumber \\
                    &    = \pf \begin{bmatrix}
                        Q^{*} & H(v_{2k})+H(v_{2k-1})  \\
                        -(H(v_{2k})+H(v_{2k-1}))^{\intercal} & 0
                        \end{bmatrix}, \label{eq.main3proof-1} \\
  |D^{-}(2n-1;k)| & = |\mathscr{P}_0(U,\{v_{2k}\} \cup V^{*})| - |\mathscr{P}_0(U,\{v_{2k-1}\} \cup V^{*})|  \nonumber \\
                  &      = \pf \begin{bmatrix}
                            Q^{*} & H(v_{2k})-H(v_{2k-1})  \\
                              -(H(v_{2k})-H(v_{2k-1}))^{\intercal} & 0
                          \end{bmatrix} \label{eq.main3proof-2},
\end{align}
where $Q^{*}$ is the matrix $A_{[2n-1]}$ given in \eqref{eq.rec} and $H(v_j)=[|\mathscr{P}(u_i,v_j)|]_{i=1}^{2n-1}$ is the column vector.

Thanks to \eqref{eq.puv+} and \eqref{eq.puv-}, the column vectors in \eqref{eq.main3proof-1} and \eqref{eq.main3proof-2} are given by $H(v_{2k})+H(v_{2k-1}) = [2d_{i-k,k-1}]_{1 \leq i \leq 2n-1}$ and $H(v_{2k})-H(v_{2k-1}) = [2d_{i-k-1,k-1}]_{1 \leq i \leq 2n-1}$, respectively. Using a similar idea that led to \eqref{eq.R3}, we expand the Pfaffian in \eqref{eq.main3proof-1} and \eqref{eq.main3proof-2} along the last column. As a consequence, we obtain
\begin{align}
  |D^{\pm}(2n-1;k)| & = \sum_{\ell=1}^{2n-1} (-1)^{2n+\ell-1} 2d_{\ell-k,k-1} |O(2n-1;\ell)|, \\
  |D^{-}(2n-1;k)| & = \sum_{\ell=1}^{2n-1} (-1)^{2n+\ell-1} 2d_{\ell-k-1,k-1} |O(2n-1;\ell)|,
\end{align}
for $k=1,\dotsc,2n-1$. 

Clearly, these $2n-1$ equations are equivalent to the following matrix equations
\begin{equation*}
  M^{\pm}_{[2n-1]} \mathbf{O}_{2n-1} = \mathbf{D}^{\pm}_{2n-1}, \text{ and } M^{-}_{[2n-1]} \mathbf{O}_{2n-1} = \mathbf{D}^{-}_{2n-1},
\end{equation*}
where the $(k,\ell)$-entry of $M^{\pm}_{[2n-1]}$ is given by $2 (-1)^{\ell-1} d_{\ell-k,k-1}$ and the $(k,\ell)$-entry of $M^{-}_{[2n-1]}$ is given by $2 (-1)^{\ell-1} d_{\ell-k-1,k-1}$.

Finally, we have
$\mathbf{D}^{+}_{2n-1} = \mathbf{D}^{\pm}_{2n-1}-\mathbf{D}^{-}_{2n-1} = \left( M^{\pm}_{[2n-1]} -M^{-}_{[2n-1]} \right) \mathbf{O}_{2n-1} = M^{+}_{[2n-1]} \mathbf{O}_{2n-1}$. This completes the proof of Theorem~\ref{thm.main3}.
\end{proof}

\subsection{Proof of Corollary \ref{cor.identity}}\label{sec.misid}

We close this section by proving Corollary \ref{cor.identity}, this result follows from Theorem~\ref{thm.main1}, Theorem~\ref{thm.main3} and Proposition~\ref{prop.Oreverse}.
\begin{proof}[Proof of Corollary \ref{cor.identity}]
  By the symmetry property of $|O(2n-1;k)|$ (Theorem~\ref{thm.main1}) and the obvious fact that $|D^{\pm}(2n-1;k)| = |D^{\pm}(2n-1;2n-k)|$ (the Aztec diamond region is symmetric about the horizontal diagonal), \eqref{eq.cor1} is equivalent to
\begin{equation}\label{eq.cor3}
  \frac{|O(2n-1;2n-2)|}{|O(2n-1;2n-1)|} = 2n-3 \text{\quad and \quad} \frac{|D^{\pm}(2n-1;2n-2)|}{|D^{\pm}(2n-1;2n-1)|} = 2n-2.
\end{equation}
The first identity is obtained from the second to the last row of the matrix equation in \eqref{eq.RO}, the translation invariant of $r_{i,j}^{n}$ (Proposition \ref{prop.Oreverse} (2)) and \eqref{eq.r12} (replace $n$ by $2n-1$).
\begin{align*}
    |O(2n-1;2n-2)| & = - |O(2n-1;2n-2)| + \left( r^{2n-1}_{2n-2,2n-1} \right) |O(2n-1;2n-1)|  \\
         & = - |O(2n-1;2n-2)| + \left( r^{2n-1}_{1,2} \right) |O(2n-1;2n-1)|  \\
         & = - |O(2n-1;2n-2)| + (4n-6) |O(2n-1;2n-1)|.
\end{align*}

By Theorem \ref{thm.main3}, the last row of the matrix equation \eqref{eq.matrixod} gives
\begin{equation}\label{eq.cor4}
  |D^{\pm}(2n-1;2n-1)| = \left( m^{\pm}_{2n-1,2n-1} \right) |O(2n-1;2n-1)| = 2 |O(2n-1;2n-1)|.
\end{equation}
The second to the last row of the matrix equation \eqref{eq.matrixod} leads to
\begin{align*}
    |D^{\pm}(2n-1;2n-2)| & = -2 |O(2n-1;2n-2)| + \left( m^{\pm}_{2n-2,2n-1} \right) |O(2n-1;2n-1)|  \\
         & = -2 |O(2n-1;2n-2)| + 2(d_{1,2n-3}) |O(2n-1;2n-1)|  \\
         & = -2(2n-3) |O(2n-1;2n-1)| + 2(4n-5) |O(2n-1;2n-1)| \\
         & = (4n-4) |O(2n-1;2n-1)|  = (2n-2) |D^{\pm}(2n-1;2n-1)|.
\end{align*}
This finishes the proof of \eqref{eq.cor1}.

Finally, the entries in the first row of the matrix $M^{\pm}$ are given by $m^{\pm}_{1,j} = 2(-1)^{j-1}d_{j-1,0} = 2(-1)^{j-1}$. Then the first row of the matrix equation \eqref{eq.matrixod} gives the following alternating sum
\begin{equation*}
  \sum_{j=1}^{2n-1} 2 (-1)^{j-1} |O(2n-1;j)| = |D^{\pm}(2n-1;1)| = |D^{\pm}(2n-1;2n-1)| = 2 |O(2n-1;2n-1)|,
\end{equation*}
which is equivalent to \eqref{eq.cor2}. This completes the proof of Corollary \ref{cor.identity}.
\end{proof}

\section{Open problems}\label{sec.open}

In this section, we formulate two conjectures about the number of domino tilings of $O(2n-1;k)$, $D^{*}(2n-1;k)$ ($*$ represents $\pm,+$ and $-$, respectively) and $D(2n-1)$ studied in this paper. Both conjectures are verified up to $n=35$.

A sequence of real numbers $(a_i)_{i=0}^{n}$ is said to be \textit{unimodal} if there is an index $j$ such that $a_0 \leq a_1 \leq \cdots a_{j-1} \leq a_j \geq a_{j+1} \geq \cdots \geq a_{n}$. This sequence is \textit{log-concave} if $a_{i-1}a_{i+1} \leq a_i^2$ for $i=1,\dotsc,n-2$. Recall that a positive and log-concave sequence implies this sequence is unimodal.

Given a positive integer $n$, our data shows that the sequences $(|D^{*}(2n-1;k)|)_{k=1}^{2n-1}$ ($*$ represents $\pm,+$ and $-$, respectively) are not unimodal, but we have the following conjecture. 
\begin{conjecture}\label{conj.logconcave}
  Given a positive integer $n$, the sequence $(|O(2n-1;k)|)_{k=1}^{2n-1}$ consisting of the numbers of off-diagonally symmetric domino tilings of $AD(2n-1)$ with one boundary defect at the position $k$ is log-concave.
\end{conjecture}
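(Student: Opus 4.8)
The goal reduces to proving the quadratic inequalities $|O(2n-1;k-1)|\cdot|O(2n-1;k+1)| \le |O(2n-1;k)|^{2}$ for $2 \le k \le 2n-2$; combined with the positivity $|O(2n-1;k)|>0$ (immediate from exhibiting a single tiling in each set, or from $\pf(A_{[2n-1]\setminus\{k\}})\neq 0$) this would in addition recover the palindromic unimodality, peaked at $k=n$, already forced by Theorem~\ref{thm.main1}.

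The first approach I would try is a non-crossing tail-swap injection on the lattice-path model. By Proposition~\ref{prop.Oreverse} together with the bijection of Section~\ref{sec.methodpath}, $|O(2n-1;k)|$ is the number of non-intersecting families on $\mathcal{D}(2n-1)$ with sources $U\cup\{w^{2n-1}_{2n-k}\}$ whose endpoints occupy the doublets of $V^{*}$ (zero or two endpoints per doublet), and the extra source $w^{2n-1}_{2n-k}$ slides one unit down the northwestern boundary as $k$ increases. In the spirit of the classical Lindstr{\"o}m--Gessel--Viennot proofs of log-concavity for tiling counts, one would overlay a family $P$ counted by $|O(2n-1;k-1)|$ with a family $P'$ counted by $|O(2n-1;k+1)|$, whose extra sources $w^{2n-1}_{2n-k+1}$ and $w^{2n-1}_{2n-k-1}$ straddle $w^{2n-1}_{2n-k}$; a parity argument should then force some path of $P$ to meet some path of $P'$; and one swaps the two paths past a canonically chosen meeting point to land in the square of the $k$-set, with injectivity coming from reversibility of the swap. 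The main obstacle, and what makes this genuinely harder than the determinantal case, is the off-diagonal doublet constraint on the endpoints --- exactly the feature that turns $|O(2n-1;k)|$ into a Pfaffian rather than an ordinary non-intersecting-path determinant. A naive tail-swap redistributes endpoints between the two families and will in general violate the ``zero-or-two per doublet'' condition (and can even force two paths of one family to share an endpoint). Making the swap ``doublet-aware'' --- e.g.\ exchanging tails in pairs, or performing the exchange strictly to the left of all endpoints involved --- while keeping the map injective is where I expect a new idea to be needed; even the parity step is more delicate here because the occupied doublets of $P$ and $P'$ need not coincide.

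An alternative, more algebraic route is to extract the inequality from Pfaffian identities. Writing $|O(2n-1;k)| = \pf\big((A_{[2n]})_{\widehat{k},\widehat{2n}}\big)$ exhibits these numbers as two-deletion Pfaffian minors of the single matrix $A_{[2n]}$, and Theorem~\ref{thm.OffPfaffian} shows that $A$ is ``totally nonnegative'' in the Pfaffian sense: every Pfaffian minor $\pf(A_{J})$ equals some $|O(2n;\,\cdot\,)|\ge 0$. A suitable Pfaffian Desnanot--Jacobi (Dodgson condensation) or Grassmann--Pl{\"u}cker relation among these minors would give the inequality. The difficulty is that producing $|O(2n-1;k)|^{2}$ together with $|O(2n-1;k-1)|\cdot|O(2n-1;k+1)|$ in one quadratic relation appears to require a northwest-to-southwest symmetry for two-defect counts, analogous to Theorem~\ref{thm.main1} but one defect higher (Theorem~\ref{thm.main1} is supplied by the involution $R_{2n-1}$ only for a single defect); alternatively one would want a general lemma that Pfaffian-totally-nonnegative matrices have log-concave one-deletion sequences, or the real-rootedness of $\sum_{k}|O(2n-1;k)|\,x^{k}$. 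Any of these would be an attractive intermediate target, and the verification through $n=35$, together with the symmetric positive shape of the sequence, makes me believe one of these paths can be completed.
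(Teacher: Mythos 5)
The statement you are addressing is left as an open problem in the paper: Conjecture~\ref{conj.logconcave} is stated in Section~\ref{sec.open} with only numerical verification up to $n=35$, and no proof is given there. So there is no argument of the author's to compare yours against, and your text, read on its own terms, is a research plan rather than a proof: both of your routes stop exactly at the points you yourself flag as requiring ``a new idea.''

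To be concrete about why neither route closes. For the tail-swap injection, the obstruction is not only that a naive swap violates the doublet condition; the preliminary parity step is also unproved. In the classical Lindstr\"om--Gessel--Viennot log-concavity arguments the forced intersection comes from the endpoints being \emph{fixed} and interleaved, whereas here the endpoints of $P$ and $P'$ are free subject to the zero-or-two-per-doublet rule, so the occupied doublets of the two overlaid families can differ and no crossing need occur; and even when a crossing exists, exchanging tails reallocates endpoints between the families in a way that generically breaks the doublet condition in \emph{both} images, so you do not land in $O(2n-1;k)\times O(2n-1;k)$. You would need a doublet-preserving exchange together with a proof that it is injective, and you supply neither. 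For the algebraic route, the identification $|O(2n-1;k)|=\pf\bigl((A_{[2n]})_{\hat{k},\widehat{2n}}\bigr)$ and the nonnegativity of all Pfaffian minors $\pf(A_J)$ are correct, but the standard Pfaffian Pl\"ucker/Dodgson relations do not produce the specific quadratic $\pf(A_{\widehat{k-1},\widehat{2n}})\pf(A_{\widehat{k+1},\widehat{2n}})\le \pf(A_{\hat{k},\widehat{2n}})^2$ with controlled signs; and the three intermediate statements you invoke (a two-defect analogue of Theorem~\ref{thm.main1}, a general ``Pfaffian total nonnegativity implies log-concave one-deletion sequences'' lemma, or real-rootedness of $\sum_k |O(2n-1;k)|x^k$) are each themselves unproved and at least as hard as the conjecture. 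Your framing of the problem and your diagnosis of where the difficulty sits (the doublet constraint turning a determinant into a Pfaffian) are sound and would make a good starting point, but the conjecture remains open after your write-up.
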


We know that the number of domino tilings of $AD(n)$ is given by the simple formula $\M(AD(n)) = 2^{n(n+1)/2}$. Consequently, we have $\lim_{n \rightarrow \infty} (\M(AD(n)))^{1/n^2} = \sqrt{2}$. Our data shows that the number of (nearly) off-diagonally symmetric domino tilings of the Aztec diamond has a similar asymptotic behavior, which is stated below.
\begin{conjecture}\label{conj.asymptotic}
  Let $O(2n,[2n])$ be the set of off-diagonally symmetric domino tilings of $AD(2n)$ with no boundary defect and $D(2n-1)$ be the set of nearly off-diagonally symmetric domino tilings of $AD(2n-1)$. Then we have
\begin{equation}\label{eq.asymptotic}
  \lim_{n \rightarrow \infty} \left( |O(2n,[2n])| \right)^{\frac{2}{(2n)^2}} = \lim_{n \rightarrow \infty} \left( |D(2n-1)| \right)^{\frac{2}{(2n-1)^2}} = \sqrt{2}.
\end{equation}
\end{conjecture}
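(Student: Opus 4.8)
The plan is to reduce both limits in \eqref{eq.asymptotic} to a single sharp growth estimate. By Theorem~\ref{thm.OffPfaffian} (with $I=[2m]$) and Theorem~\ref{thm.main2}, $|O(2m,[2m])|=\pf(A_{[2m]})$ and $|D(2n-1)|=\pf(B_{2n})$, and since a skew-symmetric matrix of even order satisfies $\pf(\cdot)^{2}=\det(\cdot)$, we get $|O(2m,[2m])|^{2}=\det A_{[2m]}$ and $|D(2n-1)|^{2}=\det B_{2n}$. Taking logarithms, the two equalities in \eqref{eq.asymptotic} are equivalent to $\log_{2}|O(2m,[2m])|=m^{2}+o(m^{2})$ and $\log_{2}|D(2n-1)|=n^{2}+o(n^{2})$ respectively; since an additive $O(m)$ correction in the exponent is washed out on taking a root of order $\Theta(m^{2})$, everything reduces to pinning the exponential growth rate to be exactly $2^{m^{2}}$.

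For the first limit I would appeal to the closed-form enumeration of the off-diagonal symmetry class in \cite{Lee23}. If $|O(2m,[2m])|$ is expressed there as a product of superfactorial-type factors with arguments linear in $m$ — as in all known ASM-type enumerations — then Stirling's formula gives $|O(2m,[2m])|=C\,m^{\alpha}\lambda^{m^{2}}(1+o(1))$ for an explicit $\lambda$, and one checks from the product that the $m^{2}\log m$ contributions cancel and $\lambda=2$. Failing an available product form, one would instead evaluate $\det A_{[2m]}$ directly, either by the Lindstr{\"o}m--Gessel--Viennot lemma applied to the non-intersecting Delannoy-path model underlying Theorem~\ref{thm.OffPfaffian} or by Dodgson condensation driven by the recurrence \eqref{eq.rec}, the expectation being — by analogy with $\M(AD(m))=2^{m(m+1)/2}$ — an answer that is essentially a power of $2$. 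In what follows I take $\log_{2}|O(2m,[2m])|=m^{2}+o(m^{2})$ as known and deduce the $D$-limit from it.

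The second limit then follows by sandwiching $|D(2n-1)|$ between $O$-numbers. Summing \eqref{eq.2-enumoff} over $\tau$ and using the $2$-enumeration interpretation of OSASMs gives $|O(2n,[2n])|=2\sum_{k=1}^{2n-1}|O(2n-1;k)|$; on the other hand, left-multiplying the identity $M^{\pm}_{[2n-1]}\mathbf{O}_{2n-1}=\mathbf{D}^{\pm}_{2n-1}$ of Theorem~\ref{thm.main3} by the all-ones row vector and using $\sum_{k=1}^{\ell}d_{\ell-k,k-1}=f_{\ell}/2$ (which is \eqref{eq.main2proof-4} rewritten) yields
\[
  |D(2n-1)|=\sum_{\ell=1}^{2n-1}(-1)^{\ell-1}f_{\ell}\,|O(2n-1;\ell)|.
\]
Hence $|D(2n-1)|\le f_{2n-1}\sum_{\ell=1}^{2n-1}|O(2n-1;\ell)|=\tfrac12 f_{2n-1}\,|O(2n,[2n])|$, and since $f_{2n-1}$ is twice a Pell number we have $\log_{2}f_{2n-1}=O(n)$, so $\log_{2}|D(2n-1)|\le n^{2}+o(n^{2})$. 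For the matching lower bound I would use a bordering construction: an off-diagonally symmetric tiling of $AD(2n-2)$ extends, by filling the vertically symmetric one-layer diagonal frame $AD(2n-1)\setminus AD(2n-2)$, to a vertically symmetric tiling of $AD(2n-1)$, which — since an odd-order Aztec diamond admits no off-diagonally symmetric tiling — is automatically nearly off-diagonally symmetric; distinct inner tilings produce distinct extensions, so $|D(2n-1)|\ge|O(2n-2,[2n-2])|$ and thus $\log_{2}|D(2n-1)|\ge(n-1)^{2}+o(n^{2})=n^{2}+o(n^{2})$. Combining the two bounds gives $\log_{2}|D(2n-1)|=n^{2}+o(n^{2})$, i.e. $|D(2n-1)|^{2/(2n-1)^{2}}\to\sqrt2$.

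The crux — and the only genuinely hard step — is the first limit, the sharp rate $2^{m^{2}}$ for $|O(2m,[2m])|$. Crude inputs do not suffice: Hadamard's inequality on $\det A_{[2m]}$ bounds the rate only by a power of the Delannoy growth constant $3+2\sqrt{2}$, and the trivial inclusion of $O(2m,[2m])$ into all tilings of $AD(2m)$ gives only rate $\le 2^{2}$. Isolating the value $\sqrt2$ really requires either the closed product form of \cite{Lee23} — followed by the (routine) Stirling check that its superfactorial factors collapse to a pure exponential, exactly as for the ASM counting function — or a matching pair of entropy/variational bounds (in the spirit of the limit-shape theory for domino tilings) engineered so that the non-local constraint ``all diagonal cells equal $0$'' does not cost positive entropy per unit area. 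Everything downstream of that single estimate — the Stirling cleanup, the sandwich above, and the $2^{O(n)}$ corrections — is routine.
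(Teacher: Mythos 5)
This statement is Conjecture~\ref{conj.asymptotic} of the paper: it appears in Section~\ref{sec.open} as an open problem, supported only by numerical verification up to $n=35$, so there is no proof in the paper to compare against. Your proposal does not supply one. The entire argument is conditional on the estimate $\log_2|O(2m,[2m])|=m^2+o(m^2)$, which you explicitly ``take as known''; but that estimate \emph{is} the first half of the conjecture. Your primary plan for it --- Stirling analysis of ``the closed-form enumeration of the off-diagonal symmetry class in \cite{Lee23}'' --- rests on a false premise: the result of \cite{Lee23} (Theorem~\ref{thm.OffPfaffian} here) is a Pfaffian whose entries satisfy a recurrence, not a product formula, and the paper states explicitly that no closed-form (or even conjectured) product formula is known. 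Your own fallbacks (Hadamard, inclusion into all tilings of $AD(2m)$, unspecified entropy bounds) are acknowledged to give the wrong constant, so the crux is left entirely open.

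Even taking the first limit for granted, your sandwich for $|D(2n-1)|$ is only half right. The identity $|D(2n-1)|=\sum_{\ell}(-1)^{\ell-1}f_{\ell}\,|O(2n-1;\ell)|$ and the resulting upper bound $\log_2|D(2n-1)|\le n^2+o(n^2)$ are fine (they follow from Theorem~\ref{thm.main3}, the computation $h_i=f_i$, and \eqref{eq.2-enumoff}). The lower bound is not: a vertically symmetric tiling of $AD(2n-1)$ obtained by framing an off-diagonally symmetric tiling of $AD(2n-2)$ is \emph{not} automatically nearly off-diagonally symmetric. Near off-diagonal symmetry requires exactly one nonzero cell among the $2n-1$ diagonal cells, whereas a vertically symmetric tiling of odd order merely has an odd number of nonzero diagonal cells, which can be $3,5,\dotsc$; ``not off-diagonally symmetric'' is far weaker than ``nearly off-diagonally symmetric''. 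Moreover, the cells of $AD(2n-1)$ are defined with respect to its own checkerboard coloring, which does not restrict to that of $AD(2n-2)$, so the vanishing of the inner tiling's diagonal assignments transfers no information to the diagonal cells of the larger region. With the lower bound gone, neither limit is established, and the conjecture remains open.
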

%

%%%%%%%%%%%%%%%%%%%%%%%%%%%%%%%%%%%%%%%%%%%%%%%%%
\noindent \textbf{Acknowledgements.} The author thanks Mihai Ciucu for stimulating discussions and helpful suggestions throughout the writing of this paper. The author also thanks the reviewers for carefully reading the manuscript and providing helpful comments.

%The author also thanks the reviewers for carefully reading the manuscript and providing helpful comments.

%BIBLIOGRAPHY
% You do not have to use the same format for your references, but
%    include everything in this file.  Don't use natbib please.
% If you use BibTeX to create a bibliography, copy the .bbl file into here.
% \newblock is optional (it adds a little space)

%\bibliographystyle{plain}
%\bibliography{ref}

\end{document}